\documentclass[11pt,reqno]{amsart}
\setcounter{tocdepth}{2}
\usepackage{amsmath}
\usepackage{amsfonts,amsthm,amssymb}
\usepackage{mathabx}
\usepackage{bm}
\usepackage{amscd}
\usepackage[latin2]{inputenc}
\usepackage{t1enc}
\usepackage[mathscr]{eucal}
\usepackage{indentfirst}
\usepackage{graphicx}
\usepackage{graphics}
\usepackage{pict2e}
\usepackage{epic}
\numberwithin{equation}{section}
\usepackage[margin=3cm]{geometry}
\usepackage{epstopdf} 
\usepackage[colorlinks,linkcolor=blue]{hyperref}
\usepackage[capitalise,noabbrev]{cleveref}
\usepackage{amscd}
\usepackage{latexsym}
\usepackage{amssymb}
\usepackage{amsmath}
\usepackage{amsthm}
\usepackage{indentfirst}
\usepackage{pifont}
\usepackage{graphicx}
\usepackage{geometry}
\usepackage{subfig}
\usepackage{cases}
\usepackage{hyperref}
\usepackage{cite}
\usepackage{color}

\usepackage{todonotes}
\crefformat{equation}{(#2#1#3)}
\crefmultiformat{equation}{(#2#1#3)}{ and~(#2#1#3)}{, (#2#1#3)}{ and~(#2#1#3)}
\crefrangeformat{equation}{(#3#1#4) to~(#5#2#6)}
\setlength{\marginparwidth}{2.5cm}
\usepackage[normalem]{ulem}

\allowdisplaybreaks

\theoremstyle{plain}
\newtheorem{Thm}{Theorem}[section]
\newtheorem*{Thm*}{Theorem}
\newtheorem{Lem}[Thm]{Lemma}

\newtheorem{Prop}[Thm]{Proposition}

\theoremstyle{definition}

\newtheorem{Rem}[Thm]{Remark}
\newtheorem{?}[Thm]{Problem}

\newcommand{\R}{\mathbb{R}}

\newcommand{\di}{\displaystyle}

\newcommand{\norm}[1]{\left\lVert#1\right\rVert}


\begin{document}
	
		\title[Stability of 2D non-self-similar rarefaction wave]{Nonlinear asymptotic stability of non-self-similar rarefaction wave for two-dimensional viscous Burgers equation}
		
		
\author[Huang]{Feimin Huang}
\address[Feimin Huang]{\newline Institute of Applied Mathematics, AMSS, Chinese Academy of Sciences, Beijing 100190, P. R. China
and School of Mathematical Sciences, University of Chinese Academy of Sciences,
\newline Beijing 100049, P. R. China}
\email{fhuang@amt.ac.cn}

\author[Qiu]{Guiqin Qiu}
\address[Guiqin Qiu]{\newline  Institute of Applied mathematics, AMSS, Chinese Academy of Sciences, Beijing 100190, P.R. China}
\email{guiqinqiu@126.com}

\author[Wang]{Yi Wang}
\address[Yi Wang]{\newline Institute of Applied Mathematics, AMSS, Chinese Academy of Sciences, Beijing 100190, P. R. China
\newline
and School of Mathematical Sciences, University of Chinese Academy of Sciences,
\newline Beijing 100049, P. R. China}
\email{wangyi@amss.ac.cn}

\author[Yang]{Xiaozhou Yang}
\address[Xiaozhou Yang]{\newline Wuhan Institute of Physics and Mathematics, Innovation Academy for Precision Measurement Science and Technology, 
	Wuhan 430071, P.R. China
	\newline
and School of Mathematical Sciences, University of Chinese Academy of Sciences,
\newline Beijing 100049, P. R. China}
\email{xzyang@wipm.ac.cn}


\maketitle

\vspace{-0.6cm}

\begin{abstract}
We investigate the large time behavior of solutions to the two-dimensional viscous Burgers equation $u_t+uu_x+uu_y=\Delta u$, toward a non-self-similar rarefaction wave of inviscid Burgers equation with two initial  constant states, seperated by a curve $y=\varphi(x)$, and prove that the above 2D non-self-similar rarefaction wave is time-asymptotically stable. Furthermore, we can get the decay rate. Both the rarefaction wave strength and the initial perturbation can be large.

The main difficulty comes from the fact that the initial discontinuity of 2D non-self-similar rarefaction is a curve $y=\varphi(x)$, while the initial discontinuity of self-similar rarefaction is either a point in 1D or a straight line in 2D. Fortunately, we uncover a novel property that the  non-self-similar inviscid rarefaction wave is also asymptotically stable with respect  to the discontinuity curve $y=\varphi(x)$. More precisely, let $u_i^R(x,y,t), i=1,2$ be  the corresponding non-self-similar rarefaction with the initial discontinuity curve $y=\varphi_i(x)$, then $\|u_1^R-u_2^R\|_{L^\infty}\le \frac{C}{t}$ if $\|\varphi_1(x)-\varphi_2(x)\|_{L^\infty}$ is bounded. Based on this property, any curve $y=\varphi(x)$ can be classified as a standard  polyline  \begin{equation*}
L(x)=
\left\{\begin{array}{ll}
	\di  k_-x+c,&\di x< 0,\\[1mm]
	\di k_+x+c,&\di x\geq 0,
\end{array}
\right.
\end{equation*}
where $\lim_{x\to \pm\infty}\frac{\varphi(x)}{x}=k_\pm$. 

Then we construct the approximate smooth rarefaction wave of the viscous Burgers equations based on the above polyline, and 
convert this wave to the self-similar planar rarefaction one through a new nonlinear coordinate transformation with the price that the 2D viscous Burgers equation is changed into a parabolic equation with variable and mixed derivative viscosities.  Another advantage is that the main error terms generated by the new approximate smooth rarefaction in the new parabolic equation are integrable in $\R^2$.
This approach enables us to overcome the main difficulty mentioned above.
Finally, the time-asymptotic stability analysis is carried out to the viscous rarefaction wave and the transformed 2D viscous Burgers equation by using suitable time-weighted $L^p$-energy estimates, which gives a first result on the nonlinear time-asymptotic stability of 2D non-self-similar rarefaction wave.
\vspace{.2cm}

\noindent {\bf Keywords}:
{non-self-similar rarefaction wave; nonlinear stability; 2D Burgers equation; $L^p$ energy method}
\vspace{.2cm}

\noindent {\bf Mathematics Subject Classification}: 35B40, 35B45, 35L65.
\end{abstract}

\vspace{.2cm}




\section{Introduction and Main Result}

We investigate the time-asymptotic stability of 2D non-self-similar  rarefaction wave for 2D viscous Burgers equation
 \begin{equation}\label{eq}
 u_t+uu_x+uu_y=u_{xx}+u_{yy},
 \end{equation}
with the initial data
 \begin{equation}\label{idata}
 u(0,x,y)=u_0(x,y)
 \end{equation}
 satisfying 
 \begin{equation}\label{idata1}
 u_0(x,y)\rightarrow u_{\pm}\quad \text{as} \quad Z(x,y)\rightarrow \pm\infty,
 \end{equation}
where $u_{\pm}$ are prescribed constants satisfying $u_-<u_+$, and $Z(x,y)$ is an implicit function determined by 
\begin{equation}\label{imz}
 y-Z-\varphi (x-Z)=0,
 \end{equation}
here $y=\varphi(x)$ is the initial discontinuity curve of 2D non-self-similar rarefaction wave with $\varphi(x)\in C^{2}(\mathbb R)$ satisfying 
\begin{equation}\label{2DH}1-\varphi'(x)>d_0>0\end{equation}
for all $x\in \mathbb R$, and $d_0$ being some positive constant. Note that the condition \eqref{2DH} guarantee the global existence and uniqueness of the implicit function $Z(x,y)$ in \eqref{imz}.
In particular, if $\varphi(x)=kx+c~(k<1)$ is a straight line with the slope $k<1$, then $$Z(x,y)= \frac{y-kx-c}{1-k},$$ which is exactly the classical planar self-similar rarefaction wave case up to a linear transformation. Therefore, the main result and the analysis in the sequel can be fully applied to the planar self-similar rarefaction wave case.

As the time $t\rightarrow +\infty$, we expect that the large-time behavior of the solution to \eqref{eq}-\eqref{idata} is determined by the following 2D non-self-similar Riemann problem

\begin{equation}\label{eq0}
	u_t+uu_x+uu_y=0,
\end{equation}
\begin{equation}\label{0data}
	u(0,x,y)=\begin{cases}
		u_-,~y-\varphi (x)<0,\\
		u_+,~y-\varphi (x)>0,\end{cases}
\end{equation}
 where $u_-<u_+$ and $y-\varphi(x)=0$ is the general initial discontinuity curve satisfying \eqref{2DH}. Then the unique entropic solution to \eqref{eq0}-\eqref{0data} is 2D non-self-similar rarefaction wave, which is already constructed in \cite{Yang1999}, with the explicit formula
\begin{equation}\label{Rs}
	u^R(t,x,y)=\begin{cases}
		u_-,&y< u_-t+\varphi (x-u_-t),\\[2mm]
		\frac{Z(x,y)}{t},&u_-t+\varphi (x-u_-t)\leq y\leq u_+t+\varphi (x-u_+t), \\[1mm]
		u_+,&y>u_+t+\varphi (x-u_+t),
	\end{cases}
\end{equation}
where $Z(x,y)$ in  the intermediate state  is the implicit function determined by 
\eqref{imz}. 
The 2D non-self-similar rarefaction wave $u^R(t,x,y)$ in \eqref{Rs} can be illustrated by \cref{f2}.

\begin{figure}[htbp]
 	\centering
	\scalebox{1}{\includegraphics[scale=0.49,trim={0cm 0cm 0cm 0cm},clip]{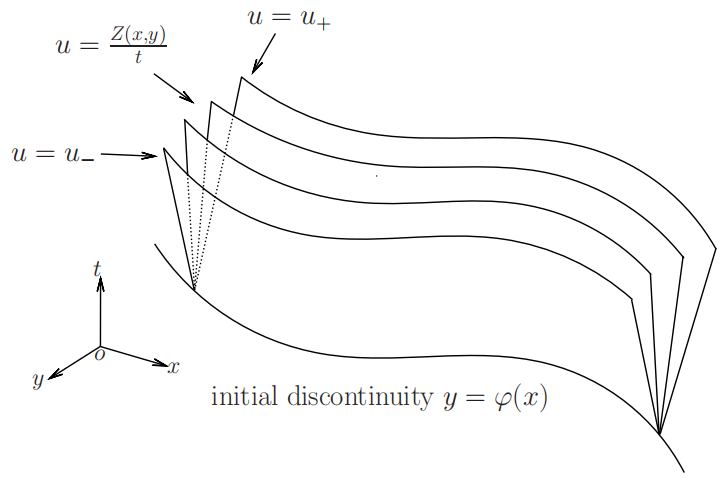}}\\
	\caption{2D non-self-similar rarefaction wave
 	}\label{f2}
 \end{figure}
Combining \eqref{imz} and \eqref{2DH}, 2D non-self-similar rarefaction wave \eqref{Rs} can equivalently be expressed as follows:
\begin{equation}\label{Rs1}
	u^R(t,x,y)=\begin{cases}
		u_-,&Z(x,y)<u_-t,\\[2mm]
		\frac{Z(x,y)}{t},&u_-t\leq Z(x,y)\leq u_+t,\\[1mm]
		u_+,&Z(x,y)>u_+t.
	\end{cases}
\end{equation}

The main purpose of the present paper is to prove the time-asymptotic stability of the above 2D non-self-similar  rarefaction wave $u^R(t,x,y)$ to 2D viscous Burgers equation \eqref{eq}-\eqref{idata}. 
 
There are plenty of literatures on the time-asymptotic stability of 1D self-similar rarefaction wave to 1D viscous conservation laws since the classical work of Il'in and Oleinik \cite{Ilin1960} in $1960$, see \cite{H1988,HN1991,KT2004,MN1986} and the references therein for details. 
Recently, there are also many works on the time-asymptotic stability of 1D self-similar planar rarefaction wave for multi-dimensional viscous conservation laws.  
Xin \cite{Xin1990} first proved the time-asymptotic stability of 1D planar rarefaction waves to multi-dimensional scalar viscous conservation laws by the fundamental $L^2$ energy methods. Then Ito \cite{Ito1996} and Nishikawa and Nishihara \cite{NN2000} showed the corresponding time decay rate toward the  planar rarefaction wave in \cite{Xin1990}, and Hokari and Matsumura \cite{HoM} generalized \cite{Xin1990} to a special 2D system with artificial viscosities.  Chen and Zhu  \cite{CZ2010} obtained the time decay rate of strong planar rarefaction wave to multi-dimensional scalar viscous conservation laws with degenerate viscosity. Kawashima, Nishibata and Nishikawa \cite{KNN2004} established the general $L^p$ stability framework for planar rarefaction wave  to multi-dimensional scalar viscous conservation laws. Very recently, the time-asymptotic stability and/or vanishing dissipation limit of self-similar planar rarefaction wave to multi-dimensional scalar conservation laws and 2D/3D compressible Navier-Stokes equations with physical viscosities were also well-established in \cite{HY2021, LWW1, LWW2, LWW, LW}  when the transverse directions are periodic. Note that all the above results are concerned with the stability of 1D self-similar rarefaction wave to 1D/multi-dimensional viscous conservation laws. As far as we know, there are no any result on the time-asymptotic stability of non-self-similar multi-dimensional wave patterns to scalar viscous conservation laws \eqref{eq}.
The main purpose of the present paper is to show the time-asymptotic stability of 2D non-self-similar rarefaction wave $u^R(t,x,y)$ in \eqref{Rs} for the 2D viscous Burgers equation. 

First,  since 2D non-self-similar rarefaction wave \eqref{Rs} is only Lipschitz continuous, to prove its time-asymptotic stability to the viscous Burgers equation \eqref{eq} with second order derivatives,  we need  to construct its suitable smooth approximation ansatz. 
Since 2D non-self-similar rarefaction wave \eqref{Rs} has the form $\di u^R\big(\frac{Z(x,y)}{t}\big)$ as the media state, and from \cref{LZ} the function $x-Z(x,y)$ can be expressed as $G(x-y)$ for some implicit function $G$ determined by $-(x-y)+G-\varphi (G)=0$ and satisfying $G'(\xi)=\frac{1}{1-\varphi'(G(\xi))}>0$.
Therefore, we can introduce the following spatial coordinate transformation 
\begin{equation}\label{nct}
\xi=x-y,~\eta=Z(x,y),
\end{equation}  
 to convert the 2D non-self-similar rarefaction wave  \eqref{Rs} or \eqref{Rs1} to 1D self-similar planar rarefaction wave $u^R(\frac{\eta}{t})$ in $(\xi, \eta)$-coordinate. Motivated by 1D stability, we can construct the smooth ansatz $w(t,\eta)$ as
  \begin{equation}\label{www1}
 \left\{\begin{array}{ll}
\di  	w_t+ww_{\eta}=0,\\
\di	w(0,\eta)=w_0(\eta):=\frac{u_++u_-}{2}+\frac{u_+-u_-}{2}\kappa \int_{0}^{\eta}\frac{1}{1+\alpha^2}d\alpha,
\end{array}
\right. \end{equation}
where the positive constant
\begin{equation}\label{kapa}
	\kappa :=\left(\int_{0}^{+\infty}\frac{1}{1+\alpha^2}d\alpha\right)^{-1}.
\end{equation}
Since $w_0^\prime(\eta)>0,$ 1D inviscid Burgers equation \eqref{www1} has a unique global classical solution $w(t,\eta)$ satisfying $w_\eta(t,\eta)>0$ for all $t\in \mathbb{R}^+$.

Correspondingly, under the new spatial coordinate $(\xi, \eta)$, 2D viscous Burgers equation \eqref{eq} is transformed into
\begin{equation}\label{nlte}
	u_t+uu_{\eta} =K(\xi)u_{\eta\eta}+A(\xi)u_{\xi\eta}+2u_{\xi\xi}+B(\xi)u_{\eta},
\end{equation}
where 
\begin{equation}\label{coeff}
	K(\xi)=\frac{1+\varphi'^2(G(\xi))}{(1-\varphi'(G(\xi)))^2}>0,\quad
	A(\xi)=\frac{-2(1+\varphi'(G(\xi)))}{1-\varphi'(G(\xi))},\quad
	B(\xi)=\frac{-2\varphi''(G(\xi))}{(1-\varphi'(G(\xi)))^3},
\end{equation}
and detailed derivations can be found in  \cref{LZ} below. We emphasize that some new difficulties appear due to the dissipation terms with variable coefficients and the mixed derivitives on the right-hand side of \eqref{nlte}. On the other hand, all the coefficients $K, A$ and $B$ in \eqref{coeff} depend only on the variable $\xi$, which is quite important in our later analysis.

Second, if we carry out the $L^p$-stability analysis around the inviscid approximate rarefaction wave profile  $w(t,\eta)$ in \eqref{www1} for 2D transformed viscous Burgers equation \eqref{nlte} in $\mathbb{R}^2$, then the error terms due to the inviscid profile to the viscous equation are not $L^p$-integrable on $\mathbb {R}^2$ with respect to the spatial variable. This difficulty is similar to the one occurred in the time-asymptotic stability of 1D self-similar planar rarefaction wave to the multi-dimensional scalar viscous conservation laws, but the main difference here is the dissipation terms on the right-hand side of \eqref{nlte} are much more complicated and include the variable coefficients and the mixed derivatives. Note that the last term $B(\xi)u_{\eta}$ on the right-hand side of \eqref{nlte} is a first-order derivative  term with the coefficient $B(\xi)$ having second-order derivative of $\varphi$ with respect to $G(\xi)$, but it really comes from the original dissipation terms of the equation \eqref{eq} and can also be viewed as the dissipation term  under the new spatial coordinate $(\xi, \eta)$. Inspired by the time-asymptotic stability of 1D self-similar planar rarefaction wave to the multi-dimensional scalar viscous conservation laws in Xin \cite{Xin1990},  we first solve  the following
1D viscous Burgers equation for the positive variable viscosity coefficient $K(\xi)$
\begin{equation}\label{varf}
 \left\{\begin{array}{ll}
\di  
v_t+vv_{\eta}=K(\xi)v_{\eta\eta},\\[2mm]
v(0,\xi, \eta)=w_0(\eta),
\end{array}
\right.
\end{equation}
around the smooth rarefaction ansatz $w(t,\eta)$ in \eqref{www1} with the same initial value $w_0(\eta)$. Due to the  variable viscosity coefficient $K(\xi)$, the solution to the equation \eqref{varf} also depends on the variable $\xi$, denoted by $v(t,\xi,\eta)$.  By maximum principle, we have the solution $v(t,\xi,\eta)$ still satisfies $v_\eta(t,\xi,\eta)>0$ for all $t\in \mathbb{R^+}$ and $\xi\in \mathbb{R}$. The other detailed properties of the solution $v(t,\xi,\eta)$ will be listed in Subsection 3.1. Therefore, we can overcome the difficulty due to the error term with respect to the viscosity term $K(\xi)u_{\eta\eta}$.



Next, we deal with the remaining error terms due to $A(\xi)u_{\xi\eta}+2u_{\xi\xi}+B(\xi)u_{\eta}$. Since the viscous rarefaction wave profile $v(t,\xi,\eta)$ in \eqref{varf} depends on the variable $\xi$ due to the non-self-similar structure, which is quite different from the self-similar planar rarefaction wave case in \cite{Xin1990}, we need to further assume that the initial discontinuity $y=\varphi(x)\in C^2(\mathbb{R})$ is linear at infinity, and can be an arbitrary $C^2$ function in any finite domain,
that is, there exist constants $k_1,k_2,c_1,c_2\in \mathbb R$ such that $\varphi(x)\in C^2(\mathbb{R})$ and
\begin{equation}\label{cl}
\norm{\varphi(x)-(k_1x+c_1)}_{ H^1(-\infty,0)}<+\infty,~\norm{\varphi(x)-(k_2x+c_2)}_{H^1(0,+\infty)}<+\infty.
\end{equation} 
If the initial discontinuity $y=\varphi(x)=kx+c~(k<1)$ corresponds to the self-similar planar rarefaction wave, then the assumption \eqref{cl} obviously holds true by choosing $k_1=k_2=k,~c_1=c_2=c$. 
 
Under the assumption \eqref{cl}, we can reformulate our problem by constructing a modified poly-line $y=L_{\epsilon_0}(x)\in C^{\infty}(\mathbb{R})$ satisfying
\begin{equation}\label{Le0}
 L_{\epsilon_0}'(x)=
 \left\{\begin{array}{ll}
 \di  k_1,&\di x\leq -\epsilon_0,\\[1mm]
 \di k_2,&\di x\geq \epsilon_0,
 \end{array}
 \right.
\end{equation}
for some fixed positive constant $\epsilon_0$.
See \cref{f1}.  
 \begin{figure}[htbp]
	\centering
	\scalebox{1}{\includegraphics[scale=0.8,trim={0cm 0cm 0cm 0cm},clip]{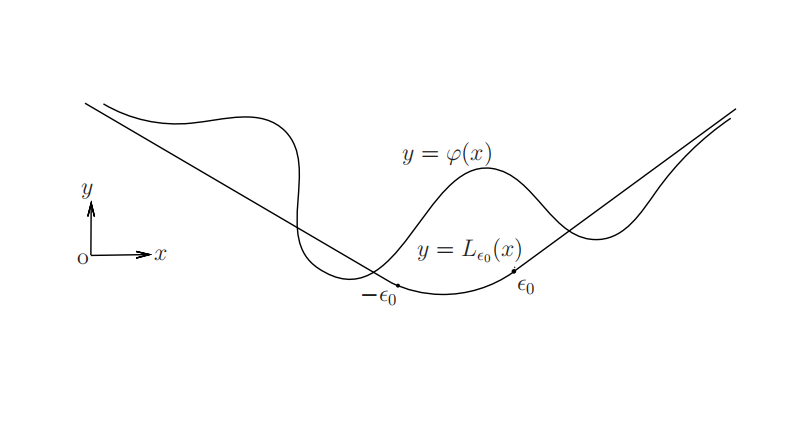}}
	\caption{Initial discontinuity curve $\varphi(x)$ and modified poly-line $L_{\epsilon_0}(x)$ 
	}\label{f1}
\end{figure}
We claim that the 2D non-self-rarefaction wave for the initial discontinuity $y=\varphi(x)$ satisfying \cref{2DH} and \cref{cl} is time-asymptotically equivalent to the one for the the initial discontinuity $y=L_{\epsilon_0}(x)$ defined in \eqref{Le0} for  some positive constant $\epsilon_0$. The detailed proof of this claim will be presented in Section 2.

Since $L_{\epsilon_0}''(x)=0$ for $ |x|\geq \epsilon_0$, and the numerator of the coefficient $B(\xi)$ defined in \eqref{coeff} depends linearly on $L_{\epsilon_0}''$, the spatial support of the error term $B(\xi)v_{\eta}$ lies in the set  $\big\{(\xi, \eta)\in\mathbb{R}^2\big|\xi\in[G^{-1}(-\epsilon_0),G^{-1}(\epsilon_0)]\big\}$. Therefore, the error term due to $B(\xi)u_{\eta}$ is $L^p$-integrable with respect to $(\xi, \eta)\in \mathbb {R}^2$. 
For the error terms concerning $A(\xi)u_{\xi\eta}+2u_{\xi\xi}$, by the chain rule $v_{\xi}=\frac{\partial v}{\partial K}K'(\xi),$ and from \eqref{coeff}, we have
 $$
 K^\prime(\xi)=\frac{2\big(1+L_{\epsilon_0}'(G(\xi))\big)L_{\epsilon_0}''(G(\xi))G^\prime(\xi)}{(1-L_{\epsilon_0}'(G(\xi)))^3}.
 $$
Similarly we can see that $K'(\xi)$,  $A(\xi)v_{\xi\eta}$ and $v_{\xi\xi}$ are completely supported in $\big\{(\xi, \eta)\in\mathbb{R}^2\big|\xi\in[G^{-1}(-\epsilon_0),G^{-1}(\epsilon_0)]\big\}$. Therefore, all the error terms  due to $A(\xi)u_{\xi\eta}+2u_{\xi\xi}+B(\xi)u_{\eta}$ are $L^p$-integrable on $\mathbb {R}^2$ for the initial discontinuity $y=L_{\epsilon_0}(x)$ satisfying \eqref{Le0}.
Then we can use the $L^p$-energy method introduced in \cite{KNN2004} for the perturbation $\phi(t, \xi, \eta):=u(t,\xi, \eta)-v(t,\xi, \eta)$.

However, all the above considerations for the error terms due to the dissipation terms only concern the $L^p$-integrability with respect to $(\xi, \eta)\in \mathbb {R}^2$, and do not involve the integration with respect to the time $t$.
Roughly speaking, carrying out  $L^p$-estimate to the perturbation $\phi:=u-v$, we have
\begin{equation}\label{b0}
	\begin{aligned}
	&\frac{1}{p}\frac{d}{dt}\norm{\phi}_{L^p(R^2)}^p+\frac{p-1}{p}\iint_{\mathbb R^2}v_{\eta}|\phi|^pd\xi d\eta +(p-1)\iint_{\mathbb R^2}|\phi|^{p-2}(K(\xi)\phi_{\eta}^2+A(\xi)\phi_{\xi}\phi_{\eta}+2\phi_{\xi}^2)d\xi d\eta \\&=
	\iint_{\mathbb R^2}A(\xi)v_{\xi\eta}|\phi|^{p-2}\phi d\xi d\eta +2\iint_{\mathbb R^2}v_{\xi\xi}|\phi|^{p-2}\phi d\xi d\eta +\iint_{\mathbb R^2}B(\xi)v_{\eta}|\phi|^{p-2}\phi d\xi d\eta.
\end{aligned}
\end{equation}
If we estimate the last term  on the right-hand side of \eqref{b0} as
\begin{equation}\label{b1}
	\begin{aligned}
\iint_{\mathbb R^2}B(\xi)v_{\eta}|\phi|^{p-2}\phi d\xi d\eta&\leq \epsilon\iint_{\mathbb R^2}v_{\eta}|\phi|^pd\xi d\eta +C_{\epsilon}\iint_{\mathbb R^2}|B(\xi)|v_{\eta}d\xi d\eta\\ 
	&= \epsilon\iint_{\mathbb R^2}v_{\eta}|\phi|^pd\xi d\eta +C_\epsilon,\end{aligned}
\end{equation}
with the small positive constant $\epsilon$ and the positive constant $C_\epsilon$ depending on $\epsilon$. Then we can only have at best
  \begin{equation}\label{b2}
  \norm{\phi}_{L^p}^p\leq C_\epsilon(1+t), 
  \end{equation}
which is not good enough for our time-asymptotic stability. Therefore, we need to use another method to estimate \eqref{b1}.
We observe that 
 \begin{equation}
 	B(\xi)=\frac{-2\varphi''(G(\xi))}{(1-\varphi'(G(\xi)))^3}=\partial_{\xi}\big(\frac{-2}{1-\varphi'(G(\xi))}\big),
 \end{equation}
 and $K(\xi),A(\xi)$ satisfy 
 \begin{equation}\label{para}
 	K(\xi)\alpha_1^2+A(\xi)\alpha_1\alpha_2+2\alpha_2^2\geq d(\alpha_1^2+\alpha_2^2) 
 \end{equation}
for any $\xi\in \mathbb R$ and $(\alpha_1,\alpha_2)\in \mathbb R^2$, where $d$ is a positive constant. Then for $p>2$,  we can deduce from \eqref{b0} after the integration by part that
 \begin{equation}\label{b3}
 	\begin{aligned}
 		&\frac{d}{dt}\norm{\phi}_{L^p(\mathbb R^2)}^p+C_p\iint_{\mathbb R^2}v_{\eta}|\phi|^pd\xi d\eta +C_{p,d}\norm{\nabla(|\phi|^{\frac{p}{2}})}_{L^2(\mathbb R^2)}^2\\\leq&
 		C\iint_{\mathbb R^2}v_{\xi}^2|\phi|^{p-2}d\xi d\eta +C\iint_{\mathbb R^2}v_{\eta}^2|\phi|^{p-2}d\xi d\eta.
 	\end{aligned}
 \end{equation}
Since $\norm{v_{\xi}}_{L^{p}}\leq C(1+t)^{-\frac{1}{2}+\frac{1}{2p}}\ln(1+t)$ and $\norm{ v_{\eta}}_{L^{p}}\leq (1+t)^{-1+\frac{1}{p}}$, we still cannot control the terms on the right-hand side of \eqref{b3} directly. 
Fortunately, using the fact that ${\rm supp}\  v_{\xi}(t,\xi,\eta)\subset \mathbb R_{+} \times[G_1, G_2]\times \mathbb R$ with $G_1:=G^{-1}(-\epsilon_0)$ and $G_2:=G^{-1}(\epsilon_0)$, we have  for any $\xi\in \mathbb{R}$
  \begin{align*}
  	v_{\xi}^2(t,\xi,\eta)=\partial_{\xi}\left(\int_{G_1}^{\xi}v_{\xi}^2(t,\alpha,\eta)d\alpha\right),
  \end{align*}
 and
  \begin{align*}
  	\int_{G_1}^{\xi}v_{\xi}^2(t,\alpha,\eta )d\alpha\leq \int_{G_1}^{G_2}v_{\xi}^2(t,\alpha,\eta)d\alpha.
  \end{align*}
For $p>4$, we have
  \begin{align*}
  	\iint_{\mathbb R^2}v_{\xi}^2|\phi|^{p-2}d\xi d\eta &=-(p-2)\iint_{\mathbb R^2}\left(\int_{G_1}^{\xi}v_{\xi}^2(t,\alpha,\eta)d\alpha\right)|\phi|^{p-4}\phi \phi_{\xi}d\xi d\eta \\
  	&\leq \epsilon \iint_{\mathbb R^2}|\phi|^{p-2}\phi_{\xi}^2d\xi d\eta +C_{\epsilon}\iint_{\mathbb R^2}\left(\int_{G_1}^{G_2}v_{\xi}^2(t,\alpha,\eta)d\alpha\right)^2|\phi|^{p-4}d\xi d\eta \\
  	&\leq \epsilon \norm{\nabla(|\phi|^{\frac{p}{2}})}_{L^2(\mathbb R^2)}^2 +C_{\epsilon}(G_2-G_1)^2\norm{v_{\xi}}_{L^{\infty}}^4\norm{\phi}_{L^{p-4}}^{p-4}\\
	&\leq  \epsilon \norm{\nabla(|\phi|^{\frac{p}{2}})}_{L^2(\mathbb R^2)}^2 +C_{\epsilon}(1+t)^{-2}\ln^4(3+t)\norm{\phi}_{L^{p-4}}^{p-4},
  \end{align*}
  with the small positive constant $\epsilon$ and the positive constant $C_{\epsilon}$ depending on $\epsilon$. 
  
  On the other hand,
$$
  \iint_{\mathbb R^2}v_{\eta}^2|\phi|^{p-2}d\xi d\eta\leq \norm{v_{\eta}}_{L^{\infty}}^2\norm{\phi}_{L^{p-2}}^{p-2}\leq C(1+t)^{-2}\norm{\phi}_{L^{p-2}}^{p-2}.$$
 Therefore, from  \eqref{b3} we have 
\begin{equation}\label{ggg}
	\frac{d}{dt}\norm{\phi}_{L^p(\mathbb R^2)}^p
	\leq
	C(1+t)^{-2}\ln^4(3+t)\norm{\phi}_{L^{p-4}}^{p-4}+C(1+t)^{-2}\norm{\phi}_{L^{p-2}}^{p-2}.
\end{equation}
By combining \eqref{b2} and \eqref{ggg},  we have
$$
\norm{\phi}_{L^{6}(\mathbb R^2)}\leq C\ln(3+t).
$$
Therefore, we can obtain the time decay rate of $\norm{\phi}_{L^p(\mathbb R^2)}$ for $p>6$  through the $L^p$-energy method. Note that the approximate rarefaction wave is time-asymptotically equivalent to its viscous profile, we can finally obtain our main result on the time-asymptotic stability of 2D non-self-similar rarefaction wave \eqref{Rs} stated as 

\begin{Thm}\label{mt}
	Suppose that $u_0(x,y)-w_0(Z(x,y))\in H^1(\mathbb{R}^2)\cap L^\infty(\mathbb{R}^2)$ and the initial discontinuity $y=\varphi(x)$ satisfies \cref{2DH} and \cref{cl}. Then the Cauchy problem \cref{eq}-\cref{idata1} admits a unique global smooth solution $u(t,x,y)$ satisfying

	\begin{equation}\label{mtr}
		\norm{u(t,x,y)-u^R(t,x,y)}_{L^{\infty}(\mathbb{R}^2)} \leq C_{\epsilon}(1+t)^{-\frac{1}{8}+\epsilon}\ln^3(3+t), \qquad \forall t\geq0,
	\end{equation}
	where $u^R(t,x,y)$ is 2D non-self-similar rarefaction wave \eqref{Rs} or \eqref{Rs1},  $\epsilon>0$ is an arbitrarily small constant, and the positive constant $C_{\epsilon}$ is independent of the time $t\geq 0$ but dependent on $\epsilon$.
\end{Thm}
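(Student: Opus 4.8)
The plan is to combine the reduction of an arbitrary curve $\varphi$ to a modified poly-line $L_{\epsilon_0}$ (Section 2) with the $L^p$-energy estimates for the transformed equation \eqref{nlte}, and then to unwind the coordinate transformation \eqref{nct} to return to the original $(x,y)$-variables. First, by the claim stated in Section 2, it suffices to prove stability for the initial discontinuity $y=L_{\epsilon_0}(x)$; fix such a poly-line. Using the spatial coordinate transformation \eqref{nct}, the 2D viscous Burgers equation \eqref{eq} becomes \eqref{nlte} with the coefficients \eqref{coeff}, which satisfy the uniform parabolicity \eqref{para}; since $L_{\epsilon_0}''$ is compactly supported in $[-\epsilon_0,\epsilon_0]$, the coefficients $K'(\xi),A(\xi)$ and $B(\xi)$ are supported in $\xi\in[G_1,G_2]$ with $G_i:=G^{-1}(\pm\epsilon_0)$. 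I then introduce the viscous rarefaction profile $v(t,\xi,\eta)$ solving \eqref{varf} with the variable viscosity $K(\xi)$, whose decay properties (to be established in Subsection 3.1) are $\norm{v_\eta}_{L^p}\lesssim(1+t)^{-1+1/p}$, $\norm{v_\xi}_{L^p}\lesssim(1+t)^{-1/2+1/(2p)}\ln(1+t)$, $\norm{v_\eta}_{L^\infty}\lesssim(1+t)^{-1}$, and $\norm{v_\xi}_{L^\infty}\lesssim(1+t)^{-1/2}\ln(3+t)$, together with the $L^p$-closeness $\norm{v(t)-w(t)}_{L^\infty}\to0$ of $v$ to the inviscid ansatz $w$ from \eqref{www1}.

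Next I set $\phi:=u-v$, which solves a perturbation equation whose source terms are exactly $A(\xi)v_{\xi\eta}+2v_{\xi\xi}+B(\xi)v_\eta$, all of them $L^p$-integrable on $\R^2$ by the support property. The local existence of $\phi$ in $H^1\cap L^\infty$ follows from standard parabolic theory for \eqref{nlte}, so the whole argument reduces to a priori estimates. The key is the $L^p$-energy identity \eqref{b0}: I use the parabolicity \eqref{para} to absorb the quadratic gradient term, rewrite $B(\xi)=\partial_\xi\big(\tfrac{-2}{1-\varphi'(G(\xi))}\big)$ and integrate by parts so that the worst source term is converted into $\iint v_\xi^2|\phi|^{p-2}$-type quantities, arriving at the differential inequality \eqref{b3}. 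The compact $\xi$-support of $v_\xi$ then lets me write $v_\xi^2=\partial_\xi\big(\int_{G_1}^\xi v_\xi^2\,d\alpha\big)$, integrate by parts once more in $\xi$, and bound the resulting term (for $p>4$) by $\epsilon\norm{\nabla(|\phi|^{p/2})}_{L^2}^2+C_\epsilon(1+t)^{-2}\ln^4(3+t)\norm{\phi}_{L^{p-4}}^{p-4}$; the $v_\eta^2$-term is simpler, giving $C(1+t)^{-2}\norm{\phi}_{L^{p-2}}^{p-2}$. Combined with the crude bound $\norm{\phi}_{L^p}^p\le C_\epsilon(1+t)$ from the naive estimate \eqref{b2}, a bootstrapping through increasing exponents ($p=6$ first, yielding $\norm{\phi}_{L^6}\le C\ln(3+t)$, then $p>6$) produces decay rates $\norm{\phi}_{L^p}\le C_\epsilon(1+t)^{-\theta_p+\epsilon}$. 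I close the nonlinear estimate by a continuity argument on $\sup_{[0,T]}\norm{\phi(t)}_{H^1\cap L^\infty}$, using in addition an $H^1$ (or $L^2$ of $\nabla\phi$) estimate to upgrade the $L^p$ decay to an $L^\infty$ decay via Gagliardo–Nirenberg, which gives the exponent $-\tfrac18+\epsilon$ in \eqref{mtr}.

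Finally I translate back: $\norm{v(t)-w(t)}_{L^\infty}$ decays by Subsection 3.1, $\norm{w(t,\eta)-u^R(\eta/t)}_{L^\infty}$ decays at rate $(1+t)^{-1}$ as in the classical 1D theory since $w$ and the inviscid rarefaction share the same building block $\int_0^\eta(1+\alpha^2)^{-1}d\alpha$, and the coordinate change \eqref{nct} is a bi-Lipschitz homeomorphism of $\R^2$ (by \eqref{2DH} and \cref{LZ}) so $L^\infty$-norms are preserved; combining these with the already-established stability for the poly-line $L_{\epsilon_0}$ and the Section 2 equivalence $\norm{u_{L_{\epsilon_0}}^R-u_\varphi^R}_{L^\infty}\le C/t$ yields \eqref{mtr} for the original $\varphi$. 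The main obstacle I anticipate is the $L^p$-energy estimate for the transformed equation \eqref{nlte}: controlling the cross term $A(\xi)\phi_\xi\phi_\eta$ requires the precise parabolicity \eqref{para} rather than a simple sign, and the source term $B(\xi)v_\eta$ cannot be handled by Young's inequality alone (that only gives the useless \eqref{b2}); the whole scheme hinges on the algebraic identity $B(\xi)=\partial_\xi(\cdots)$ and on exploiting the compact $\xi$-support of $v_\xi, v_{\xi\xi}, v_{\xi\eta}, B$, which is the reason for first reducing $\varphi$ to the poly-line $L_{\epsilon_0}$. A secondary technical point is establishing the sharp decay and support properties of $v(t,\xi,\eta)$ with the $\xi$-dependent viscosity $K(\xi)$, which do not follow verbatim from Xin's constant-viscosity analysis and must be redone by the maximum principle and energy methods parametrized by $\xi$.
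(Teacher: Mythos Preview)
Your proposal is correct and follows essentially the same route as the paper: reduction to the poly-line $L_{\epsilon_0}$, the coordinate change \eqref{nct}, the viscous profile $v$ from \eqref{varf}, the $L^p$-energy inequality \eqref{b3} obtained via the identity $B(\xi)=\partial_\xi(\cdots)$ and the compact-$\xi$-support trick for $v_\xi$, the bootstrap through $p=6$ to $p>6$, and finally Gagliardo--Nirenberg plus the triangle inequality $u-u^R=(u-v)+(v-w)+(w-u^R)$. Two small corrections: no continuity/smallness argument is needed (the $L^p$ estimates close for arbitrary data), and to reach $L^\infty$ the paper actually proves $L^p$ decay of $\phi_\eta$ and $\phi_\xi$ for large $p$ (not just $H^1$), then applies $\norm{\phi}_{L^\infty}\le C\norm{\phi}_{L^p}^{1-2/p}\norm{\nabla\phi}_{L^p}^{2/p}$; also $\norm{w-u^R}_{L^\infty}\lesssim(1+t)^{-1/2}$, not $(1+t)^{-1}$, but this is still faster than the target rate.
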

\begin{Rem}
\cref{mt} gives a first result on the nonlinear time-asymptotic stability of 2D non-self-similar rarefaction wave with time decay rare as far as we know, even though the decay rate in \eqref{mtr} may not be optimal.	
\end{Rem}
\begin{Rem}
	If we further assume that the initial data $u_0(x,y)-w_0(Z(x,y))\in L^1(\mathbb R^2)$, then the decay rate in \eqref{mtr} can be improved to $C_{\epsilon}(1+t)^{-\frac{1}{7}+\epsilon}\ln^3(3+t)$.
\end{Rem}
\begin{Rem}
	\cref{mt} can be extended to the more general scalar viscous conservation laws
	\begin{equation}\label{ge}
	u_t+k_1f(u)_x+k_2f(u)_y=\varepsilon\Delta u,
	\end{equation}
	 where
	$k_1,k_2$ are any given nonzero constants, and $f''(u)>\alpha>0$. In fact, we can define the implicit $Z(x,y)$ satisfying 
	$y-k_2Z-\varphi (x-k_1Z)=0$. Note that $x-k_1Z(x,y)$ can be expressed as $G(k_2x-k_1y)$ for some implicit function $G$ determined by $-\frac{1}{k_2}(k_2x-k_1y)+G-\frac{k_1}{k_2}\varphi (G)=0$. 
	Then the non-self-similar rarefaction can be expressed by
	\begin{equation}\label{r2}
		u^R(t,x,y)=\begin{cases}
			u_-,&Z(x,y)<f'(u_-)t,\\[2mm]
			f'^{-1}(\frac{Z(x,y)}{t}),&f'(u_-)t\leq Z(x,y)\leq f'(u_+)t,\\[1mm]
			u_+,&Z(x,y)>f'(u_+)t.
		\end{cases}
	\end{equation}
Under the transformation $$\xi=k_2x-k_1y,~\eta=Z(x,y),$$ 
 we can prove the time-asymptotic stability of the 2D non-self-similar rarefaction wave \eqref{r2} to 2D general scalar viscous conservation laws \eqref{ge}.
\end{Rem}	


\

The rest of the paper is organized as follows.
In Section 2,  we give some preliminary lemmas, and also consider the stability of 2D non-self-similar rarefaction wave solution to the Riemann problem \eqref{eq0} and \eqref{0data}. Then we reformulate the 2D Burgers equation into a transformed parabolic equation with variable and mixed derivative viscosities. In Section 3, we first construct the approximate smooth rarefaction wave and obtain its properties. Then we try to prove the time-asymptotic stability and the time decay rate of approximate rarefaction wave $w(t,\eta)$ to the transformed 2D Burgers equation \eqref{equ} in the new spatial coordinate $(\xi,\eta)$. Finally we  prove the main result, i.e., \cref{mt}. 

\

\section{Preliminaries and  the reformulation of problem}
\vspace{0.3cm}
In this section, we first give some preliminary lemmas, and then consider the stability of 2D non-self-similar rarefaction wave solution to Riemann problem \eqref{eq0} \eqref{0data}  under perturbation to initial discontinuity. Next, we construct a modify poly-line $y-L_{\epsilon_0}(x)=0$, and we point out that in the regime of our main \cref{mt},  the initial discontinuity $y=\varphi(x)$ satisfying \cref{2DH} \cref{cl} is equivalent to the initial discontinuity $y=L_{\epsilon_0}(x)$, this is a critical step. Finally, by introduce an important nonlinear transformation,  we convert the 2D non-self-similar rarefaction wave to a self-similar planar rarefaction wave, and construct its smooth approximation. Thus we shall study the asymptotic stability of non-self-similar rarefaction wave in the new coordinate space.

\subsection{Preliminary Lemmas}
In this subsection, we introduce several interpolation inequalities and some properties of $Z(x,y)$, which are fundamental to our later estimates.
\begin{Lem}[\cite{KNN2004}]\label{esl}
\begin{itemize}
		
		\item[(i).] 
	Let $n\geq 1$, $2\leq p <+\infty$ and $1\leq q\leq p$, then
	\begin{equation}\label{esqn}
		\norm{u}_{L^p(\mathbb{R}^n)}\leq C\norm{\nabla (|u|^{\frac{p}{2}})}_{L^2(\mathbb{R}^n)}^{\frac{2\gamma}{1+\gamma p}}\norm{u}_{L^q(\mathbb{R}^n)}^{\frac{1}{1+\gamma p}},
	\end{equation}
where $\gamma=\frac{n}{2}(\frac{1}{q}-\frac{1}{p})$, and $C$ is a positive constant depending on $p,q$ and $n$. 
	Specially, if $n=1,2$, we have
	\begin{equation}\label{esq1}
		\norm{u}_{L^p(\mathbb{R})}\leq C\norm{\nabla (|u|^{\frac{p}{2}})}_{L^2(\mathbb{R})}^{\frac{2(p-q)}{p(p+q)}}\norm{u}_{L^q(\mathbb{R})}^{\frac{2q}{p+q}},
	\end{equation}
	\begin{equation}\label{esq2}
		\norm{u}_{L^p(\mathbb{R}^2)}\leq C\norm{\nabla (|u|^{\frac{p}{2}})}_{L^2(\mathbb{R}^2)}^{\frac{2(p-q)}{p^2}}\norm{u}_{L^q(\mathbb{R}^2)}^{\frac{q}{p}}.
	\end{equation}
\item[(ii).] 
Let $n\geq 1$ and $2\leq p <+\infty$,  then
	\begin{equation}\label{puin1}
		\norm{\partial_{x_i} u}_{L^p(\mathbb{R}^n)}\leq C\norm{\partial_{x_i} (|\partial_{x_i}u|^{\frac{p}{2}})}_{L^2(\mathbb{R}^n)}^{\frac{2}{p+2 }}\norm{u}_{L^p(\mathbb{R}^n)}^{\frac{2}{p+2}}
	\end{equation}
	for $i=1,...,n$, and $C$ is a positive constant depending only on $p$.
		\end{itemize}
\end{Lem}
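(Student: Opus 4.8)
\emph{Proof plan.} Both estimates are of Gagliardo--Nirenberg type, and the unifying device is to apply a \emph{standard} interpolation inequality not to $u$ itself but to the lifted function $w:=|u|^{p/2}$ (resp. $|\partial_{x_i}u|^{p/2}$), whose gradient is precisely the quantity on the right-hand sides. The exponents are then forced by dimensional analysis, and the special cases $n=1,2$ are substitutions into the general formula.

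For part~(i) I set $w:=|u|^{p/2}$, so that $\norm{w}_{L^2(\mathbb{R}^n)}=\norm{u}_{L^p(\mathbb{R}^n)}^{p/2}$ and $\norm{w}_{L^s(\mathbb{R}^n)}=\norm{u}_{L^q(\mathbb{R}^n)}^{p/2}$ once $s:=2q/p$. The plan is to invoke the classical Gagliardo--Nirenberg inequality
\[
\norm{w}_{L^2(\mathbb{R}^n)}\le C\,\norm{\nabla w}_{L^2(\mathbb{R}^n)}^{\theta}\,\norm{w}_{L^s(\mathbb{R}^n)}^{1-\theta},
\]
which, for $n\ge 3$, follows from the Sobolev embedding $\dot H^1(\mathbb{R}^n)\hookrightarrow L^{2^\ast}(\mathbb{R}^n)$ with $2^\ast=2n/(n-2)$ composed with the H\"older interpolation of $L^2$ between $L^{2^\ast}$ and $L^{s}$ (and from the classical Gagliardo--Nirenberg inequalities in dimensions one and two); the admissibility $s\le 2\le 2^\ast$ holds because $q\le p$. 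Rewriting in terms of $u$ and taking the $2/p$-th power produces \eqref{esqn} with gradient exponent $a=2\theta/p$ and $L^q$ exponent $1-\theta$. To identify $\theta$, I match $1-\theta=\frac{1}{1+\gamma p}$, i.e. $\theta=\frac{\gamma p}{1+\gamma p}$, and then confirm that this is exactly the value dictated by the scaling $u(\cdot)\mapsto u(\lambda\,\cdot)$: the two sides of \eqref{esqn} transform as $\lambda^{-n/p}$ and $\lambda^{a(1-n/2)-(1-\theta)n/q}$, and equating the homogeneities collapses to $2\gamma=n(\frac1q-\frac1p)$, which is the definition of $\gamma$. Since $\gamma\ge 0$ and $p>0$ one has $\theta\in[0,1)$, so the interpolation is legitimate. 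Finally \eqref{esq1} and \eqref{esq2} follow by inserting $n=1$ (so $\gamma=\frac{p-q}{2pq}$) and $n=2$ (so $\gamma=\frac{p-q}{pq}$) into $a$ and $1-\theta$ and simplifying.

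For part~(ii) the inequality is one-dimensional in the $x_i$-direction, so I freeze the transverse variables $x'=(x_1,\dots,\widehat{x_i},\dots,x_n)$, set $v:=\partial_{x_i}u$ and $g:=|v|^{p/2}$, and work first on each line. Integrating by parts in $x_i$ (boundary terms vanishing by the decay of $u\in L^p$, justified by density) gives $\int_{\mathbb{R}}|v|^p\,dx_i=-(p-1)\int_{\mathbb{R}}|v|^{p-2}(\partial_{x_i}v)\,u\,dx_i$; combining this with the pointwise identity $|v|^{p-2}|\partial_{x_i}v|=\frac2p|v|^{p/2-1}|\partial_{x_i}g|$ and the one-dimensional H\"older inequality with exponents $\big(2,\,p,\,\tfrac{2p}{p-2}\big)$ yields, with $I(x'):=\int_{\mathbb{R}}|v|^p\,dx_i$, the line estimate $I(x')^{(p+2)/(2p)}\le C\,\norm{\partial_{x_i}g}_{L^2_{x_i}}\norm{u}_{L^p_{x_i}}$. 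I then integrate the resulting pointwise bound on $I(x')$ over $x'\in\mathbb{R}^{n-1}$ and apply H\"older once more, now in $x'$, with the conjugate exponents $\big(\tfrac{p+2}{p},\tfrac{p+2}{2}\big)$, chosen so that the first factor reassembles $\norm{\partial_{x_i}g}_{L^2(\mathbb{R}^n)}$ and the second $\norm{u}_{L^p(\mathbb{R}^n)}$ (to the appropriate powers); taking the $1/p$-th power gives \eqref{puin1}.

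The mechanical parts---checking that the H\"older exponents are conjugate, simplifying the special-case algebra, and verifying that the boundary terms vanish---are routine. I expect the only delicate points to be: (a) pinning down $\theta$ and confirming $\theta\in[0,1]$ over the \emph{whole} admissible range $1\le q\le p$, in particular the low-integrability regime where $s=2q/p$ may fall below $1$, which requires the Gagliardo--Nirenberg inequality in the form valid for such $s$; and (b) in part~(ii), selecting the two pairs of H\"older exponents consistently so that the one-dimensional and transverse integrations fit together to reproduce exactly the homogeneity $1-n/p$ predicted by scaling. Both are bookkeeping rather than conceptual obstacles, and the scaling check furnishes an independent verification that the exponents have been assembled correctly.
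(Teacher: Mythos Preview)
The paper does not supply its own proof of this lemma; it is quoted from \cite{KNN2004} and stated without argument. Your proposal is therefore not being compared against anything in the text, but it is correct and is in fact the standard derivation: lifting to $w=|u|^{p/2}$ and reading the desired estimate off the Gagliardo--Nirenberg inequality for $w$ is exactly how \eqref{esqn} is obtained in the cited reference, and the integration-by-parts argument for \eqref{puin1} is likewise the usual one. The concern you flag about $s=2q/p<1$ when $q<p/2$ is real but harmless, since the relevant Gagliardo--Nirenberg interpolation remains valid for exponents below $1$; alternatively one can avoid it entirely by writing $|u|^p=|u|^{p-q}|u|^q$ and applying H\"older together with the Sobolev inequality for $|u|^{p/2}$, which keeps all Lebesgue exponents at or above $1$.
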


The inequalities in the following lemma is very useful for time-weighted energy estimates.
\begin{Lem}\label{est}Let $n\geq 1$, $2\leq p <+\infty$ and $1\leq q\leq p$, denote
	\begin{equation}\label{esj}
		I(i,k,n)=\int_{0}^{t}(1+\tau)^{\alpha-i}\norm{u}_{L^p(\mathbb{R}^n)}^{p-k}(\tau)d\tau,
	\end{equation}
	where $i\geq 0,~0\leq k<p$, and $\alpha>0$ is some sufficiently large number, then for any function $A(t)>0$, it holds that
	\begin{equation}\label{esjn}
	\begin{aligned}
			&A(t)I(i,k,n)\\\leq& \epsilon\int_{0}^{t}(1+\tau)^{\alpha}\norm{\nabla (|u|^{\frac{p}{2}})}_{L^2(\mathbb{R}^n)}^2(\tau)d\tau+C_{\epsilon}A^{\frac{1+\gamma p}{1+\gamma k}}(t)\int_{0}^{t}(1+\tau)^{\alpha-\frac{1+\gamma p}{1+\gamma k}i}\norm{u}_{L^q(\mathbb{R}^n)}^{\frac{p-k}{1+\gamma k}}d\tau,
	\end{aligned}
	\end{equation}
	where $\gamma=\frac{n}{2}(\frac{1}{q}-\frac{1}{p})$, and $C_\epsilon$ is a positive constant depending on $p,q,n$ and $\epsilon$. 
	
	Specially, if $n=1$ and choose $q=1$, we have
	\begin{equation}\label{esj1}
		\begin{aligned}
		&A(t)I(i,k,1)\\\leq& \epsilon\int_{0}^{t}(1+\tau)^{\alpha}\norm{\nabla (|u|^{\frac{p}{2}})}_{L^2(\mathbb{R})}^2(\tau)d\tau+C_{\epsilon}A^{\frac{1+\gamma p}{1+\gamma k}}(t)\int_{0}^{t}(1+\tau)^{\alpha-\frac{p(p+1)}{2p+k(p-1)}i}\norm{u}_{L^1(\mathbb{R})}^{\frac{2p(p-k)}{2p+k(p-1)}}d\tau.
		\end{aligned}
	\end{equation}
	If $n=2$ and choose $q=6$, we have
	\begin{equation}\label{esj3}
		\begin{aligned}
		&A(t)I(i,k,2)\\\leq& \epsilon\int_{0}^{t}(1+\tau)^{\alpha}\norm{\nabla (|u|^{\frac{p}{2}})}_{L^2(\mathbb{R}^2)}^2(\tau)d\tau+C_{\epsilon}A^{\frac{1+\gamma p}{1+\gamma k}}(t)\int_{0}^{t}(1+\tau)^{\alpha-\frac{p^2}{6p+k(p-6)}i}\norm{u}_{L^6(\mathbb{R}^2)}^{\frac{6p(p-k)}{6p+k(p-6)}}d\tau.\end{aligned}
	\end{equation}

\end{Lem}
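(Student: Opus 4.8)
The bound is simply the time-integrated form of the Gagliardo--Nirenberg inequality \eqref{esqn} followed by a weighted Young inequality, so my plan is entirely computational. The starting point is \eqref{esqn} with $\gamma=\frac n2(\frac1q-\frac1p)$; raising it to the power $p-k>0$ (legitimate since $0\le k<p$) gives
$$\norm{u}_{L^p(\mathbb{R}^n)}^{p-k}\le C\,\norm{\nabla(|u|^{\frac p2})}_{L^2(\mathbb{R}^n)}^{\frac{2\gamma(p-k)}{1+\gamma p}}\,\norm{u}_{L^q(\mathbb{R}^n)}^{\frac{p-k}{1+\gamma p}}.$$
When $q=p$ one has $\gamma=0$ and the claim is trivial, so I assume $q<p$, hence $\gamma>0$. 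I would then multiply by $A(t)(1+\tau)^{\alpha-i}$, integrate in $\tau$ over $(0,t)$, and, inside the integral, split the integrand as $C$ times a product $X\cdot Y$ with
$$X=(1+\tau)^{\beta_1}\norm{\nabla(|u|^{\frac p2})}_{L^2}^{\frac{2\gamma(p-k)}{1+\gamma p}},\qquad Y=A(t)\,(1+\tau)^{\beta_2}\norm{u}_{L^q}^{\frac{p-k}{1+\gamma p}},$$
where all of the factor $A(t)$ is placed in $Y$ and the time weight is split by
$$\beta_1=\frac{\alpha\gamma(p-k)}{1+\gamma p},\qquad \beta_2=(\alpha-i)-\beta_1=\frac{\alpha(1+\gamma k)}{1+\gamma p}-i.$$

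Next I would apply Young's inequality $XY\le \epsilon X^{r}+C_\epsilon Y^{r'}$ with the conjugate exponents
$$r=\frac{1+\gamma p}{\gamma(p-k)},\qquad r'=\frac{1+\gamma p}{1+\gamma k},$$
both of which exceed $1$ exactly because $1+\gamma k>0$ and $k<p$. A direct check gives $\frac{2\gamma(p-k)}{1+\gamma p}r=2$, $\beta_1 r=\alpha$, $\frac{p-k}{1+\gamma p}r'=\frac{p-k}{1+\gamma k}$ and $\beta_2 r'=\alpha-\frac{1+\gamma p}{1+\gamma k}i$, so that
$$X^{r}=(1+\tau)^{\alpha}\norm{\nabla(|u|^{\frac p2})}_{L^2}^{2},\qquad Y^{r'}=A(t)^{\frac{1+\gamma p}{1+\gamma k}}(1+\tau)^{\alpha-\frac{1+\gamma p}{1+\gamma k}i}\norm{u}_{L^q}^{\frac{p-k}{1+\gamma k}}.$$
Integrating in $\tau$, pulling the $\tau$-independent factor $A(t)^{(1+\gamma p)/(1+\gamma k)}$ out of the second integral, and absorbing the interpolation constant $C$ into the new $\epsilon$ and $C_\epsilon$, I obtain precisely \eqref{esjn}.

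Finally, the two special cases would follow by substituting $(n,q)=(1,1)$ and $(n,q)=(2,6)$ into the general formula and simplifying: for $n=1,\ q=1$ one gets $\gamma=\frac{p-1}{2p}$, whence $\frac{1+\gamma p}{1+\gamma k}=\frac{p(p+1)}{2p+k(p-1)}$ and $\frac{p-k}{1+\gamma k}=\frac{2p(p-k)}{2p+k(p-1)}$, which is \eqref{esj1}; for $n=2,\ q=6$ one gets $\gamma=\frac{p-6}{6p}$, whence $\frac{1+\gamma p}{1+\gamma k}=\frac{p^2}{6p+k(p-6)}$ and $\frac{p-k}{1+\gamma k}=\frac{6p(p-k)}{6p+k(p-6)}$, which is \eqref{esj3}. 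I do not expect any genuine obstacle here; the only points that need a little care are checking the admissibility $r,r'>1$ of the Young pair and the bookkeeping of the weight split $\beta_1+\beta_2=\alpha-i$, while the hypothesis that $\alpha$ be sufficiently large is not used in the inequality itself but is retained for its later applications, to keep the resulting time exponents controllable.
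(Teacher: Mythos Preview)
Your proposal is correct and follows essentially the same route as the paper: apply the Gagliardo--Nirenberg inequality \eqref{esqn} to $\norm{u}_{L^p}^{p-k}$, split the time weight $(1+\tau)^{\alpha-i}$ so that one piece carries exactly $(1+\tau)^{\alpha}$ after raising to the Young exponent, and then use Young's inequality with the conjugate pair $\big(\tfrac{1+\gamma p}{\gamma(p-k)},\tfrac{1+\gamma p}{1+\gamma k}\big)$. Your write-up is in fact slightly more careful than the paper's, since you explicitly note the trivial case $q=p$, verify $r,r'>1$, and check the arithmetic for the two specializations.
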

\begin{proof}
	By \eqref{esqn}, we have
	\begin{equation}
		\begin{aligned}
		A(t)I(i,k,n)&\leq	CA(t)\int_{0}^{t}(1+\tau)^{\alpha-i}\norm{\nabla (|u|^{\frac{p}{2}})}_{L^2(\mathbb{R}^n)}^{\frac{2\gamma(p-k)}{1+\gamma p}}\norm{u}_{L^q(\mathbb{R}^n)}^{\frac{p-k}{1+\gamma p}}(\tau)d\tau\\
		&=C\int_{0}^{t}(1+\tau)^{\frac{\gamma(p-k)}{1+\gamma p}\alpha}\norm{\nabla (|u|^{\frac{p}{2}})}_{L^2(\mathbb{R}^n)}^{\frac{2\gamma(p-k)}{1+\gamma p}}A(t)(1+\tau)^{\frac{1+rk}{1+\gamma p}\alpha-i}\norm{u}_{L^q(\mathbb{R}^n)}^{\frac{p-k}{1+\gamma p}}(\tau)d\tau\\
		&\leq\epsilon\int_{0}^{t}(1+\tau)^{\alpha}\norm{\nabla (|u|^{\frac{p}{2}})}_{L^2}^2d\tau+C_{\epsilon}A^{\frac{1+\gamma p}{1+\gamma k}}(t)\int_{0}^{t}(1+\tau)^{\alpha-\frac{1+\gamma p}{1+\gamma k}i}\norm{u}_{L^q}^{\frac{p-k}{1+\gamma k}}d\tau,\end{aligned}
	\end{equation}
	where we have used the Young inequality with $\frac{\gamma(p-k)}{1+\gamma p}+\frac{1+\gamma k}{1+\gamma p}=1$, thus the proof of \cref{est} is completed.
\end{proof}

The following lemma gives some properties for $Z(x,y)$.

\begin{Lem}\label{LZ}
	Suppose $Z(x,y)$ is the implicit function determined by \eqref{imz}, then it satisfies that
	
	\begin{itemize}
		
		\item[(i).]  $Z_x(x,y)=\frac{-\varphi'(x-Z(x,y))}{1-\varphi'(x-Z(x,y))},~Z_y(x,y)=\frac{1}{1-\varphi'(x-Z(x,y))}>0$ and \begin{equation}\label{1z}Z_x(x,y)+Z_y(x,y)=1;\end{equation}
		\item[(ii).] $Z_{xx}(x,y)=Z_{yy}(x,y)=-Z_{xy}(x,y)=\frac{-\varphi''(x-Z(x,y))}{(1-\varphi'(x-Z(x,y))^3};$
		
		\item[(iii).] $x-Z(x,y)=G(x-y)$, where $G(\xi)$ is an implicit function determined by
		\begin{equation}\label{G}
			-\xi+G-\varphi (G)=0,
		\end{equation}
		and \begin{equation}\label{Ga}
			G'(\xi)=\frac{1}{1-\varphi'(G(\xi))}>0;
		\end{equation}
		\item[(iv). ] ${\rm sgn}\ (Z(x,y))={\rm sgn}\ (y-\varphi (x))$.
	\end{itemize}
\end{Lem}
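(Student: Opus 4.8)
The plan is to treat $Z$ as the zero set of $F(x,y,Z):=y-Z-\varphi(x-Z)$ and to extract all four items by implicit differentiation, once global well-definedness and $C^2$ regularity of $Z$ have been secured.

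First I would fix $(x,y)$ and set $h(Z):=Z+\varphi(x-Z)$, so that $F(x,y,Z)=y-h(Z)$. Since $h'(Z)=1-\varphi'(x-Z)\ge d_0>0$ by \eqref{2DH}, the map $h$ is a strictly increasing $C^2$ bijection of $\mathbb{R}$ onto $\mathbb{R}$; hence $Z(x,y):=h^{-1}(y)$ is uniquely and globally defined, and because $\partial_Z F=-h'(Z)\le -d_0<0$ never vanishes, the implicit function theorem upgrades this to a global $C^2$ function $Z$ on $\mathbb{R}^2$. This is the only step requiring any real care — promoting the purely local implicit function theorem to a global statement — and the uniform lower bound in \eqref{2DH} makes even it routine; it also legitimizes every differentiation used below.

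For (i), differentiating $y-Z-\varphi(x-Z)=0$ in $x$ and then in $y$ produces two linear relations whose solutions are $Z_x=-\varphi'(x-Z)/(1-\varphi'(x-Z))$ and $Z_y=1/(1-\varphi'(x-Z))$, with $Z_y>0$ by \eqref{2DH}, and adding them gives $Z_x+Z_y=1$. For (ii), differentiating the identity $Z_x+Z_y=1$ once more forces $Z_{xx}=Z_{yy}=-Z_{xy}$, so it suffices to compute one of them: writing $Z_x=g(x-Z)$ with $g(s):=-\varphi'(s)/(1-\varphi'(s))$, the chain rule gives $Z_{xx}=g'(x-Z)\,(1-Z_x)$, and inserting $g'=-\varphi''/(1-\varphi')^2$ together with $1-Z_x=1/(1-\varphi'(x-Z))$ yields $Z_{xx}=-\varphi''(x-Z)/(1-\varphi'(x-Z))^3$.

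For (iii), the substitution $W:=x-Z$ turns the defining relation into $W-\varphi(W)=x-y$; since $s\mapsto s-\varphi(s)$ has derivative $1-\varphi'\ge d_0>0$ it is a bijection of $\mathbb{R}$, so this equation has the unique solution $W=G(x-y)$ with $G$ as in \eqref{G}, i.e. $x-Z(x,y)=G(x-y)$, and differentiating $G(\xi)-\varphi(G(\xi))=\xi$ gives $G'(\xi)=1/(1-\varphi'(G(\xi)))>0$. Finally (iv) is immediate from the first step: $y=h(Z(x,y))$ with $h$ strictly increasing and $h(0)=\varphi(x)$, so $Z(x,y)$ is positive, zero, or negative exactly when $y-\varphi(x)$ is positive, zero, or negative, which is the asserted sign relation ${\rm sgn}\,(Z(x,y))={\rm sgn}\,(y-\varphi(x))$.
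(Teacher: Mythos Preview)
Your proof is correct and follows essentially the same route as the paper: implicit differentiation for (i)--(ii), the substitution $W=x-Z$ rewriting \eqref{imz} as $-(x-y)+W-\varphi(W)=0$ for (iii), and monotonicity of $Z\mapsto Z+\varphi(x-Z)$ for (iv). The only cosmetic differences are that you add an explicit global existence/regularity step (which the paper takes for granted from \eqref{2DH}), and for (iv) you invoke the monotonicity of $h$ with $h(0)=\varphi(x)$ directly, whereas the paper writes out the mean-value identity $y-\varphi(x)=Z\,(1-\varphi'(\theta))$; your shortcut in (ii) of differentiating $Z_x+Z_y=1$ to obtain $Z_{xx}=Z_{yy}=-Z_{xy}$ before computing any one of them is also a small but pleasant economy.
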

\begin{proof}
	Conclusions (i) and (ii) can be obtained by direct calculation.  By rewriting  \eqref{imz} as 
	\begin{equation}\label{pG}-(x-y)+x-Z(x,y)-\varphi(x-Z(x,y))=0,	\end{equation}
	we can get (iii), and the existence of implicit function $G$ can be obtained by \eqref{2DH}. 
	
	To prove (iv), according to \eqref{imz}, we have
	\begin{align}
		y-\varphi(x)=&Z(x,y)-\varphi(x)+\varphi(x-Z(x,y))\\
		=&Z(x,y)-\varphi'(\theta)Z(x,y)\\
		=&Z(x,y)(1-\varphi'(\theta)),
	\end{align}
	since $1-\varphi'(\theta)>0$, thus we get (iv).
\end{proof}

  \subsection{Stability of 2D non-self-similar rarefaction wave to inviscid Burgers equation under perturbation to initial discontinuity}

For the Riemann problem \eqref{eq0}-\eqref{0data} to 2D inviscid Burgers equation, we consider its stability under perturbation to the initial discontinuity, that is, for two different initial discontinuities $y=\varphi_1(x)$ and $y=\varphi_2(x)$.
Let 
 $u^R_i(t,x,y)~(i=1,2)$ be 2D non-self-similar rarefaction wave solution to Riemann problem \eqref{eq0}-\eqref{0data} corresponding to the initial discontinuity $y=\varphi_i(x)~(i=1,2)$, that is
 \begin{equation}\label{Ris}
 		u_i^R(t,x,y)=\begin{cases}
 		u_-,&y< u_-t+\varphi_i (x-u_-t),\\[2mm]
 		\frac{Z_i(x,y)}{t},&u_-t+\varphi _i(x-u_-t)\leq y\leq u_+t+\varphi_i (x-u_+t), \\[1mm]
 		u_+,&y>u_+t+\varphi_i (x-u_+t),
 	\end{cases}
 \end{equation}
 where $Z_i(x,y)~(i=1,2)$ is the implicit function determined by 
 \begin{equation*}
 	y-Z_i-\varphi_i (x-Z_i)=0.
 \end{equation*}
 
 Then we have the following stability result of 2D non-self-similar rarefaction wave under $L^\infty$-perturbation to the initial discontinuity. 

 \begin{Prop}\label{prop12}
 	Suppose both two initial discontinuity curves $\varphi_1(x)\in C^2(\mathbb{R})$ and $\varphi_2(x)\in C^2(\mathbb{R})$ satisfy \cref{2DH}. If 
 	$$\norm{\varphi_1(x)-\varphi_2(x)}_{ L^{\infty}(\mathbb R)}<+\infty,$$
 	then 
 	\begin{equation}\label{RR12}
 		\norm{u^R_1(t,x,y)-u^R_2(t,x,y)}_{L^{\infty}(\mathbb R^2)}\leq \frac{C}{t},~t>0,
 	\end{equation}
 	where $u^R_i(t,x,y)(i=1,2)$ is the rarefaction wave solution to the Riemann problem \eqref{eq0}\eqref{0data} with initial discontinuity $y-\varphi_i(x)=0(i=1,2)$, and $C$ is a positive constant depending only on $d_0$ and $\norm{\varphi_1-\varphi_2}_{L^{\infty}(\mathbb R)}$.
 \end{Prop}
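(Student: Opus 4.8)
The plan is to reduce the whole statement to a single uniform estimate on $|Z_1(x,y)-Z_2(x,y)|$. First I would note that, by the reformulation \eqref{Rs1}, each wave is obtained from $Z_i$ by one and the same ``clamping'' map,
\[
u^R_i(t,x,y)=\max\Big(u_-,\ \min\big(u_+,\ \tfrac{Z_i(x,y)}{t}\big)\Big),
\]
and that $s\mapsto \max(u_-,\min(u_+,s))$ is $1$-Lipschitz on $\mathbb R$. Hence, pointwise in $(x,y)$ and $t>0$,
\[
\big|u^R_1(t,x,y)-u^R_2(t,x,y)\big|\le \frac{|Z_1(x,y)-Z_2(x,y)|}{t}.
\]
This observation is the key point, because it automatically handles the delicate transition zones where one wave has already reached a constant state while the other is still inside the fan: there the two clamped values are close precisely because $Z_1$ and $Z_2$ are close, with no need to compare the piecewise formulas \eqref{Ris} region by region.

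Next I would establish the uniform bound on $|Z_1-Z_2|$. Writing the defining relation \eqref{imz} as $Z_i=y-\varphi_i(x-Z_i)$ and subtracting the two identities yields
\[
Z_1-Z_2=(\varphi_2-\varphi_1)(x-Z_2)+\big(\varphi_1(x-Z_2)-\varphi_1(x-Z_1)\big).
\]
Applying the mean value theorem to $\varphi_1$ between $x-Z_1$ and $x-Z_2$ gives $\varphi_1(x-Z_2)-\varphi_1(x-Z_1)=\varphi_1'(\theta)(Z_1-Z_2)$ for some intermediate $\theta$, so that
\[
\big(1-\varphi_1'(\theta)\big)(Z_1-Z_2)=(\varphi_2-\varphi_1)(x-Z_2).
\]
Invoking the structural hypothesis \eqref{2DH} for $\varphi_1$, namely $1-\varphi_1'>d_0>0$, I conclude that
\[
\abs{Z_1(x,y)-Z_2(x,y)}\le \frac{1}{d_0}\,\norm{\varphi_1-\varphi_2}_{L^\infty(\mathbb R)}\qquad\text{for all }(x,y)\in\mathbb R^2;
\]
the global well-definedness of $Z_1,Z_2$ required here is exactly what \eqref{2DH} guarantees, as already noted after \eqref{imz} (and properties (i)--(iii) of \cref{LZ} could be used to derive the same bound if one prefers to avoid the mean value theorem).

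Combining the two displays gives $\norm{u^R_1(t,\cdot)-u^R_2(t,\cdot)}_{L^\infty(\mathbb R^2)}\le d_0^{-1}\norm{\varphi_1-\varphi_2}_{L^\infty(\mathbb R)}/t$, which is precisely \eqref{RR12} with $C=d_0^{-1}\norm{\varphi_1-\varphi_2}_{L^\infty(\mathbb R)}$. I do not expect a genuine obstacle in this proof: the content is simply that the implicit nonlinear map $\varphi\mapsto Z$ is Lipschitz from $L^\infty$ to $L^\infty$ with constant $d_0^{-1}$, uniformly in $(x,y)$, and that passing to the rarefaction profile only divides by $t$ and composes with a $1$-Lipschitz clamp. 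The one thing that must be done carefully rather than glossed over is that the $1/t$ rate holds \emph{globally} in $\mathbb R^2$, including in the thin strips straddling the edges of the two fans, and this is exactly what the clamping reformulation \eqref{Rs1} delivers for free.
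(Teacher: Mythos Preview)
Your proof is correct and in fact cleaner than the paper's. The paper proceeds indirectly: it first proves a comparison lemma (if $\varphi_2\le\varphi_1$ pointwise then $u^R_1\le u^R_2$, established by an exhaustive region-by-region case analysis on the six sets $\Omega_i^{\pm},\Omega_i^c$), then a second lemma giving the $C/t$ bound in the special case $\varphi_2=\varphi_1+M$ (again by cases), and finally sandwiches $u^R_1$ between the waves corresponding to $\varphi_2\pm M$ to deduce the general statement. Your argument bypasses all of this by using the clamped form \eqref{Rs1} together with the single estimate $|Z_1-Z_2|\le d_0^{-1}\norm{\varphi_1-\varphi_2}_{L^\infty}$, which is the same mean-value computation that appears inside the paper's Case~2.2 of the second lemma. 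What the paper gains is a pair of stand-alone comparison/translation lemmas that may have independent interest; what your route gains is brevity, an explicit constant $C=d_0^{-1}\norm{\varphi_1-\varphi_2}_{L^\infty}$, and the avoidance of any case splitting near the edges of the fans.
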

\begin{Rem}
	the result in \cref{prop12} can be extended to the more general scalar conservation laws
	\begin{equation}\label{fg}
		u_t+f(u)_x+g(u)_y=0,
	\end{equation}
	under the assumption $g''(u)-\varphi'(x)f''(u)>d_0>0$ for any $u\in[u_-,u_+],~x\in \mathbb{R}$, and the non-self-similar rarefaction wave solution of \eqref{fg}\eqref{0data} is 
	\begin{equation}
		u^R(t,x,y)=\begin{cases}
			u_-,&y< g'(u_-)t+\varphi (x-f'(u_-)t),\\[2mm]
			c(t,x,y),&g'(u_-)t+\varphi(x-f'(u_-)t)\leq y\leq g'(u_+)t+\varphi (x-f'(u_+)t), \\[1mm]
			u_+,&y>g'(u_+)t+\varphi (x-f'(u_+)t),
		\end{cases}
	\end{equation} 
where $c(t,x,y)$ is an implicit function determined by $y-g'(c)t+\varphi (x-f'(c)t)=0$.
\end{Rem}	

Next, the following lemma demonstrates the effect of an $H^1$ perturbation to initial discontinuity on the initial data $w_0(Z(x,y))$.
  \begin{Lem}\label{Z12}
 	Suppose that both $\varphi_1(x)$ and $\varphi_2(x)$ satisfy \cref{2DH}, and there exists a constant $C_0>0$ such that
 	\begin{align}\label{idh1}
 		\norm{\varphi_1(x)-\varphi_2(x)}_{H^1(\mathbb R)}<C_0,~|\varphi_2''(x)|\leq C_0,
 	\end{align}
 	then $$\norm{w_0(Z_1(x,y))-w_0(Z_2(x,y))}_{H^1(\mathbb R^2)}<+\infty.$$
 	where $w_0(\cdot)$ is defined in \eqref{www1}.
 \end{Lem}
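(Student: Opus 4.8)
The plan is to \emph{straighten} the two non-self-similar rarefaction profiles by the change of variables $(\xi,\eta)=(x-y,\,Z_1(x,y))$ and thereby reduce the statement to the one-dimensional claim that the gap function $h(\xi):=G_2(\xi)-G_1(\xi)$ lies in $H^1(\mathbb R)$, where $G_i$ is the implicit function attached to $\varphi_i$ through \cref{LZ}(iii), i.e.\ $-\xi+G_i(\xi)-\varphi_i(G_i(\xi))=0$. Note that $\zeta\mapsto\zeta-\varphi_i(\zeta)$ has derivative $1-\varphi_i'\ge d_0$ by \eqref{2DH}, hence is an increasing bijection of $\mathbb R$ whose inverse is exactly $G_i$; in particular $0<G_i'(\xi)=(1-\varphi_i'(G_i(\xi)))^{-1}<1/d_0$, and $G_i\colon\mathbb R\to\mathbb R$ is onto.

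First I would record the structural identities from \cref{LZ}: $Z_i(x,y)=x-G_i(x-y)$, so $Z_1-Z_2=G_2(\xi)-G_1(\xi)=h(\xi)$ is a function of $\xi=x-y$ alone, while $\partial_x Z_i=1-G_i'(\xi)$ and $\partial_y Z_i=G_i'(\xi)$, so $\abs{\partial_x Z_i},\abs{\partial_y Z_i}\le 1+1/d_0$. The map $(x,y)\mapsto(\xi,\eta)$ is a global $C^1$-diffeomorphism of $\mathbb R^2$ (its inverse is $x=\eta+G_1(\xi)$, $y=\eta+G_1(\xi)-\xi$) with Jacobian determinant $\partial_x Z_1+\partial_y Z_1=1$ by \eqref{1z}, and under it $w_0(Z_1)=w_0(\eta)$, $w_0(Z_2)=w_0(\eta-h(\xi))$. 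Hence
\begin{equation*}
\norm{w_0(Z_1)-w_0(Z_2)}_{L^2(\mathbb R^2)}^2=\iint_{\mathbb R^2}\abs{w_0(\eta)-w_0(\eta-h(\xi))}^2\,d\xi\,d\eta ,
\end{equation*}
and writing $w_0(\eta)-w_0(\eta-h(\xi))=\int_0^{h(\xi)}w_0'(\eta-h(\xi)+r)\,dr$, Minkowski's integral inequality together with $w_0'\in L^2(\mathbb R)$ (from \eqref{www1}) bounds this by $\norm{w_0'}_{L^2(\mathbb R)}^2\norm{h}_{L^2(\mathbb R)}^2$. For the gradient I would split $\partial_x\big(w_0(Z_1)-w_0(Z_2)\big)=\big(w_0'(Z_1)-w_0'(Z_2)\big)\partial_x Z_1+w_0'(Z_2)\big(G_2'(\xi)-G_1'(\xi)\big)$: the first summand is treated like the $L^2$ term, with $w_0''\in L^2(\mathbb R)$ replacing $w_0'$ (straighten again by $\eta=Z_1$, use $\abs{\partial_x Z_1}\le 1+1/d_0$), while the second equals $w_0'(Z_2)\,h'(\xi)$ and, straightening by $\eta=Z_2$, is bounded by $\norm{w_0'}_{L^2(\mathbb R)}^2\norm{h'}_{L^2(\mathbb R)}^2$; the $\partial_y$-derivative is identical with $G_1'-G_2'=-h'$. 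This reduces the lemma to $h\in H^1(\mathbb R)$.

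The step I expect to be the main obstacle is exactly $h\in H^1(\mathbb R)$, because the hypotheses allow $\varphi_i'$, hence the Jacobian of $G_i$, to be \emph{unbounded}, so the crude pointwise estimate $\abs{h}\le d_0^{-1}\abs{(\varphi_1-\varphi_2)\circ G_i}$ followed by the substitution $\zeta=G_i(\xi)$ loses too much. The device I would use is to compose the two inverse relations: since $G_i$ inverts $\zeta\mapsto\zeta-\varphi_i(\zeta)$, one gets $G_2(\xi)=F(G_1(\xi))$ where $F$ is determined by $F(\zeta)-\varphi_2(F(\zeta))=\zeta-\varphi_1(\zeta)$, so $h(\xi)=H(G_1(\xi))$ with $H(\zeta):=F(\zeta)-\zeta=\varphi_2(\zeta+H(\zeta))-\varphi_1(\zeta)$. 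A first-order Taylor expansion of $\varphi_2$ in the first slot gives $H(\zeta)\big(1-\varphi_2'(\theta_1)\big)=-(\varphi_1-\varphi_2)(\zeta)$ with $\theta_1$ between $\zeta$ and $\zeta+H(\zeta)$, hence $\abs{H(\zeta)}\le d_0^{-1}\abs{(\varphi_1-\varphi_2)(\zeta)}$ and $\abs{\theta_1-\zeta}\le\norm{H}_{L^\infty}\le d_0^{-1}\norm{\varphi_1-\varphi_2}_{L^\infty}$; differentiating (with $F'=(1-\varphi_1')/(1-\varphi_2'(F))$) similarly gives $H'(\zeta)\big(1-\varphi_2'(F(\zeta))\big)=\varphi_2''(\theta_2)H(\zeta)-(\varphi_1-\varphi_2)'(\zeta)$, so $\abs{H'(\zeta)}\le d_0^{-1}\big(C_0\abs{H(\zeta)}+\abs{(\varphi_1-\varphi_2)'(\zeta)}\big)$ by \eqref{idh1}. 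Now the substitution $\zeta=G_1(\xi)$ (so $d\xi=(1-\varphi_1'(\zeta))\,d\zeta$, and $G_1'<1/d_0$) turns $\norm{h}_{L^2(\mathbb R)}^2$ into $\int_{\mathbb R}H(\zeta)^2(1-\varphi_1'(\zeta))\,d\zeta$ and $\norm{h'}_{L^2(\mathbb R)}^2$ into $\int_{\mathbb R}H'(\zeta)^2(1-\varphi_1'(\zeta))^{-1}\,d\zeta\le d_0^{-1}\norm{H'}_{L^2(\mathbb R)}^2$; the crucial point is that the dangerous factor $1-\varphi_1'$ now sits in the \emph{numerator} of the first integral, where, writing $1-\varphi_1'=1-\varphi_2'-(\varphi_1-\varphi_2)'$ and using $1-\varphi_2'(\zeta)\le\big(1-\varphi_2'(\theta_1(\zeta))\big)+C_0\norm{\varphi_1-\varphi_2}_{L^\infty}/d_0$ (from $\abs{\theta_1-\zeta}\le\norm{H}_{L^\infty}$ and $\abs{\varphi_2''}\le C_0$) together with $H(\zeta)\big(1-\varphi_2'(\theta_1)\big)=-(\varphi_1-\varphi_2)(\zeta)$, one obtains the pointwise bound $H(\zeta)^2(1-\varphi_1'(\zeta))\le C(d_0,C_0)(\varphi_1-\varphi_2)^2(\zeta)+d_0^{-2}(\varphi_1-\varphi_2)^2(\zeta)\abs{(\varphi_1-\varphi_2)'(\zeta)}$.

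Finally, all the remaining one-dimensional integrals are finite: by \eqref{idh1}, $g:=\varphi_1-\varphi_2\in H^1(\mathbb R)\hookrightarrow L^2(\mathbb R)\cap L^4(\mathbb R)\cap L^\infty(\mathbb R)$, so $\int g^2<\infty$, $\int g^2\abs{g'}\le\norm{g}_{L^4(\mathbb R)}^2\norm{g'}_{L^2(\mathbb R)}<\infty$ (Hölder), and $\int (g')^2<\infty$; together with the pointwise bounds for $H$ and $H'$ this gives $\norm{h}_{H^1(\mathbb R)}<\infty$ with a constant depending only on $d_0$ and $C_0$. Combining this with the first two steps yields $w_0(Z_1(x,y))-w_0(Z_2(x,y))\in H^1(\mathbb R^2)$, which is the assertion. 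I expect the only genuinely non-routine point to be the numerator/denominator cancellation in the $\zeta$-integral just described; everything else (the unit Jacobian, the Minkowski estimates for translation differences, and the uniform bounds on $\partial_x Z_i,\partial_y Z_i$) is bookkeeping that follows directly from \cref{LZ} and the explicit form of $w_0$.
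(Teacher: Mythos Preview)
Your proof is correct and takes a genuinely different route from the paper's. You first pass to the unit-Jacobian coordinates $(\xi,\eta)=(x-y,Z_1)$ and observe that $Z_1-Z_2=h(\xi)$ depends on $\xi$ alone, thereby reducing the whole statement to the one-dimensional claim $h\in H^1(\mathbb R)$; the $L^2$ and gradient bounds then follow from Minkowski's inequality for translation differences, using only $w_0',w_0''\in L^2(\mathbb R)$. The paper instead works pointwise: it bounds $|w_0(Z_1)-w_0(Z_2)|$ via the mean value theorem and the specific shape of $w_0'$ (namely $w_0'(\theta)\le C\big(w_0'(Z_1)+w_0'(Z_2)\big)$ when $|Z_1-Z_2|$ is uniformly bounded), and then changes variables to the \emph{mixed} coordinates $(\alpha,\beta)=(Z_i,\,x-Z_2)$ with Jacobian $-\partial_y Z_2$.

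What each approach buys: the paper's argument is shorter and exploits the explicit form of $w_0'(\eta)=c/(1+\eta^2)$, but the step $\iint(\cdots)\,dx\,dy\le C\iint(\cdots)\,d\alpha\,d\beta$ tacitly uses $(\partial_y Z_2)^{-1}=1-\varphi_2'\le C$, i.e.\ a \emph{lower} bound on $\varphi_2'$, which is not part of the stated hypotheses (though it holds in the only application, $\varphi_2=L_{\epsilon_0}$). Your route is more robust precisely because you confront this issue head-on: your ``numerator/denominator cancellation'' for $\int H(\zeta)^2(1-\varphi_1'(\zeta))\,d\zeta$ via $1-\varphi_1'=(1-\varphi_2')-(\varphi_1-\varphi_2)'$ together with $|\varphi_2''|\le C_0$ is exactly what is needed to avoid that extra assumption. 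One small wording slip: it is not $G_i'$ that can be unbounded (indeed $0<G_i'<1/d_0$), but rather $(G_i')^{-1}=1-\varphi_i'$; your subsequent analysis treats this correctly.
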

 
For brevity, the proofs of \cref{prop12} and \cref{Z12} will be given in the Appendix. 
  
 \subsection{The reformulation of problem}\label{SEC32}
In this subsection we reformulate our problem by \cref{prop12} and \cref{Z12}. 

Define
	\begin{equation}\label{nnn}L(x):=\begin{cases}
		k_1x+c_1,~x\leq0,\\
		k_2x+c_2,~x>0,
	\end{cases}\end{equation} 

Then we mollify the discontinuous poly-line $y=L(x)$ by defining 
\begin{equation}\label{Lc}
	L_{\epsilon_0}(x)=\alpha_{\epsilon_0}\ast L(x)=\int_{\mathbb R}\alpha_{\epsilon_0}(y)L(x-y)dy,
\end{equation}
where $\alpha_{\epsilon_0}(x)\in C^{\infty}(\mathbb R)$ is the standard mollifier in $\mathbb R$ satisfying
$$
supp~\alpha_{\epsilon_0}(x)=\{x\in \mathbb R:~|x|\leq \epsilon_0\},~\alpha_{\epsilon_0}(x)=\alpha_{\epsilon_0}(-x),~\int_{\mathbb R} \alpha_{\epsilon_0}(x)dx=1.
$$
Then $L_{\epsilon_0}(x)\in C^{\infty}(\mathbb R)$ and
\begin{equation}\label{Le0}
	L_{\epsilon_0}'(x)=
	\left\{\begin{array}{ll}
		\di  k_1,&\di x\leq -\epsilon_0,\\[1mm]
		\di k_2,&\di x\geq \epsilon_0.
	\end{array}
	\right.
\end{equation}
 Combining \eqref{cl} and \eqref{Lc}, it holds that \begin{equation}\label{vh0}\norm{\varphi(x)-L_\epsilon(x)}_{H^1(\mathbb R)}<+\infty.\end{equation} 
Next, let $\varphi_1(x)=\varphi(x),~\varphi_2(x)=L_{\epsilon_0}(x)$, then from \cref{Z12}, it holds that
 \begin{equation}\label{ww0}\norm{w_0(Z_1(x,y))-w_0(Z_2(x,y))}_{H^1(\mathbb R^2)}<+\infty,\end{equation} 
and from \cref{prop12}, we can get
\begin{equation}\label{uut}\norm{u^R_1(t,x,y)-u^R_2(t,x,y)}_{L^{\infty}(\mathbb R^2)}\leq\frac{C}{t},
\end{equation} 
which
 implies that if \cref{mt} holds for the special initial discontinuity curve $y=L_{\epsilon_0}(x)$ defined in \eqref{Lc},
then \cref{mt} is still valid for the initial discontinuity curve $y=\varphi(x)\in C^2(\mathbb{R})$ satisfying \eqref{2DH}  and \eqref{cl}. We will prove the above  assertion in Subsection 3.3.

From now on, we only focus on proving \cref{mt} for the initial discontinuity curve $y=L_{\epsilon_0}(x)$ in \eqref{Lc}. Without ambiguity, we still denote $L_{\epsilon_0}(x)$ as $\varphi(x)$, then 
\begin{equation}\label{var1}
	\varphi'(x)=\begin{cases}k_1,&\di ~x\leq-\epsilon_0,\\
		L'_{\epsilon_0}(x),&\di ~-\epsilon_0<x<\epsilon_0,\\
		k_2,&\di ~x\geq\epsilon_0,
	\end{cases}
\end{equation}
and
\begin{equation}\label{var2}
	\varphi''(x)=\begin{cases}
		L''_{\epsilon_0}(x),&\di ~\left| x\right| <\epsilon_0,\\
		0,&\di ~\left| x\right| \geq\epsilon_0.
	\end{cases}
\end{equation}
	
 By observing the form of 2D non-self-similar rarefaction wave \eqref{Rs1} and  \cref{LZ} (iii), we introduce the following variable substitutions 
\begin{equation}\label{V}
	\xi=x-y,~\eta=Z(x,y),
\end{equation}
and the Jacobi determinant of  the transformation \eqref{V} is 
\begin{align}
	\di \frac{\partial (\xi, \eta)}{\partial(x,y)}&\di =\begin{vmatrix}
		\partial_x(x-y)&\partial_y(x-y)\\\partial_xZ(x,y))&\partial_yZ(x,y)
	\end{vmatrix}=\begin{vmatrix}
		1&-1\\Z_x(x,y)&Z_y(x,y)
	\end{vmatrix}
	=1.
\end{align}
Therefore, under the new spatial coordinate $(\xi,\eta)$,  2D non-self-similar rarefaction wave \eqref{Rs1} becomes  1D planar self-similar rarefaction wave
\begin{equation}\label{Rs3}
	{u}^R(\frac{\eta}{t})=\begin{cases}
		u_-,&\di ~\eta<u_-t,\\ 
	\di \frac{\eta}{t},&\di ~u_-t\leq \eta\leq u_+t,\\
		u_+,&\di ~\eta>u_+t.
	\end{cases}
\end{equation}
Then by the time-asymptotic stability and decay rate for planar self-similar rarefaction wave in \cite{Xin1990, Ito1996}), we can construct a smooth ansatz $w(t,\eta)$
\begin{equation}\label{w1}
\left\{\begin{array}{l}
\di 	w_t+ww_{\eta}=0,\\
\di w(0,\eta)=w_0(\eta)=\frac{u_++u_-}{2}+\frac{u_+-u_-}{2}\kappa \int_{0}^{\eta}\frac{1}{1+\alpha^2}d\alpha ,
\end{array}
	\right.
\end{equation}
such that 
\begin{equation}\label{T1}\norm{w(t,\eta)-u^R(\frac{\eta}{t})}_{L^\infty(\mathbb R^2)}\leq C(1+t)^{-\frac{1}{2}}.\end{equation}
 
 On the other hand, by using the properties of $Z(x,y)$ in \cref{LZ}, 2D viscous Burgers equation \eqref{eq} is transformed into
\begin{equation}\label{equ}
	u_t+uu_{\eta} =K(\xi)u_{\eta\eta}+A(\xi)u_{\xi\eta}+2u_{\xi\xi}+B(\xi)u_{\eta},
\end{equation}
where $K(\xi),A(\xi)$ and $B(\xi)$ are given functions that only depend on variable $\xi$, defined in \eqref{coeff}.  From \eqref{2DH}, there exist positive constants
$K_1,K_2$ and $M$ such that
\begin{equation}\label{KAB}
	0<K_1\leq K(\xi)\leq K_2,~ \left| A(\xi)\right| \leq M, ~\left| B(\xi)\right| \leq M,
\end{equation}
where $K_1,K_2, M$ only depend on $d_0$.

 \section{The proof of \cref{mt}}
 In this section, we prove our main result \cref{mt} for the time-asymptotic stability and the time decay rate of 2D non-self-similar rarefaction wave \eqref{Rs} or \eqref{Rs1} to 2D viscous Burgers equation \eqref{eq}.
Due to \eqref{T1},  it is sufficient to prove the time-asymptotic stability and the time decay rate of approximate rarefaction wave $w(t,\eta)$ to the transformed 2D Burgers equation \eqref{equ} in the new spatial coordinate $(\xi,\eta)$. Then the main difficulties lie in the error terms for the inviscid approximate rarefaction wave $w(t,\eta)$ to the transformed 2D Burgers equation \eqref{equ} with variable and mixed-derivative viscosities.

 
\subsection{Construction of 2D viscous profile and its properties}
In this subsection, we will construct the 2D viscous rarefaction profile and recover the viscosity term $K(\xi) u_{\eta\eta}$.
 Motivated by Xin \cite{Xin1990}, we construct the following approximate rarefaction wave $v(t,\xi,\eta)$ :
\begin{equation}\label{v1}
 \left\{\begin{array}{ll}
	\di  	v_t+vv_{\eta}=K(\xi)v_{\eta\eta},\\[2mm]
	\di	v(0,\xi,\eta):=w_0(\eta),
\end{array}
\right.	
\end{equation}
where the initial data $w_0(\eta)$ is exactly same as the one for inviscid rarefaction wave profile in \eqref{w1}.
Note that since the viscosity coefficient $K(\xi)$ in \eqref{v1} depends on the variable $\xi$,  the corresponding solution also depend on $\xi$ as $v(t,\xi,\eta)$.
 
 If we fix $\xi\in \mathbb{R}$, then \eqref{v1} can be seen as 1D viscous Burgers equation.
 According to the 1D asymptotic stability result in \cite{HN1991}, we have
\begin{equation}\label{T2}
	\norm{v(t,\xi,\eta)-w(t,\eta)}_{L^{\infty}(\mathbb R^2)}\leq C (1+t)^{-\frac{1}{2}}.
\end{equation}
Moreover, for each fixed $\xi\in \mathbb{R}$, the following \cref{vl1} lists some other properties for $v(t,\xi,\eta)$, whose proof can be found in \cite{HN1991,KT2004}.

\begin{Lem}\label{vl1} The smooth solution $v(t,\xi,\eta)$ satisfies the following properties:

\begin{itemize}
	\item[(i)]  $u_-< v(t,\xi,\eta)< u_+$, and $v_{\eta}(t,\xi,\eta)>0$.
	
	\item[(ii)] There exists a uniform-in-time positive constant $C$ such that
	\begin{equation}\label{vbp}
		\norm{v_{\eta}}_{L_{\eta}^p(\mathbb R)}\leq C(1+t)^{-1+\frac{1}{p}},~ \forall p\in [1,+\infty],
	\end{equation}
	\begin{equation}\label{vbbp}
		\norm{v_{\eta\eta}}_{L_{\eta}^p(\mathbb R)}\leq C(1+t)^{-\frac{3}{2}+\frac{1}{2p}},~\forall p\in [1,+\infty].
	\end{equation}
\end{itemize}	
\end{Lem}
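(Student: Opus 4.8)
The statement to prove is Lemma~\ref{vl1}, which collects decay properties of the viscous Burgers profile $v(t,\xi,\eta)$ defined by \eqref{v1} for each fixed $\xi$.

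\medskip

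\textit{Plan of proof.} The key observation is that for each fixed $\xi\in\mathbb R$, equation \eqref{v1} is a genuine one-dimensional viscous Burgers equation with \emph{constant} viscosity $K(\xi)\in[K_1,K_2]$ and initial datum $w_0(\eta)$, which is smooth, monotone increasing, and connects $u_-$ to $u_+$ with the algebraic tail $w_0'(\eta)\sim \kappa\frac{u_+-u_-}{2}\,\eta^{-2}$ as $|\eta|\to\infty$. So the plan is simply to quote and apply the classical one-dimensional theory of Il'in--Oleinik / Hattori--Nishihara \cite{HN1991} and Kawashima--Tanaka \cite{KT2004} for the approximate rarefaction wave, and then check that all constants can be taken uniform in $\xi$ because $K(\xi)$ ranges over the compact interval $[K_1,K_2]\subset(0,\infty)$ by \eqref{KAB}.

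\medskip

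\textit{Part (i).} The bounds $u_-<v(t,\xi,\eta)<u_+$ follow from the maximum principle: $u_\pm$ are (sub/super)solutions of the parabolic equation \eqref{v1} and the initial datum $w_0$ lies strictly between them, and since $w_0$ is nonconstant the strict inequalities propagate. For $v_\eta>0$, differentiate \eqref{v1} in $\eta$ to see that $q:=v_\eta$ solves a linear parabolic equation $q_t+(vq)_\eta=K(\xi)q_{\eta\eta}$ (equivalently $q_t+v q_\eta+q^2=K(\xi)q_{\eta\eta}$); since $q(0,\cdot)=w_0'>0$, the maximum principle gives $q>0$ for all $t>0$. (Alternatively, one can use the explicit Hopf--Cole representation of $v$, which is available here since the equation is scalar Burgers with constant coefficient in $\eta$.)

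\medskip

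\textit{Part (ii).} These are the decay rates for the derivatives of the smooth approximate rarefaction wave. One first notes that, since $v$ is a solution of viscous Burgers, by Hopf--Cole it equals $K(\xi)$ times the logarithmic derivative of a solution of the heat equation with diffusivity $K(\xi)$; the initial datum $w_0$ gives a specific Gaussian-type integral from which the pointwise and $L^p$ bounds
$\|v_\eta\|_{L^p_\eta}\le C(1+t)^{-1+1/p}$ and $\|v_{\eta\eta}\|_{L^p_\eta}\le C(1+t)^{-3/2+1/(2p)}$ are derived exactly as in \cite[Lemma~?]{HN1991} and \cite{KT2004}. The only new point is uniformity in $\xi$: all the estimates depend on $\xi$ only through $K(\xi)$, and by \eqref{KAB} we have $0<K_1\le K(\xi)\le K_2<\infty$, so the constant $C$ can be chosen depending only on $u_\pm$, $K_1$, $K_2$ (hence only on $d_0$), not on $\xi$. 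Then the $L^p$ norms in $\eta$ are genuinely uniform in $\xi$, which is what is asserted.

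\medskip

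\textit{Main obstacle.} There is no serious analytic obstacle here; the content of the lemma is entirely classical one-dimensional theory. The only thing one must be careful about is precisely the $\xi$-uniformity: one has to track that every constant in the cited one-dimensional estimates is monotone (or at least locally bounded) in the viscosity coefficient, and then invoke the two-sided bound \eqref{KAB} on $K(\xi)$. This is routine but is the one place where the argument differs from simply citing \cite{HN1991,KT2004} verbatim.
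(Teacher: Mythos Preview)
Your proposal is correct and matches the paper's approach exactly: the paper does not give a self-contained proof but simply cites \cite{HN1991,KT2004} for the one-dimensional estimates and then remarks (immediately after the lemma) that the constant $C$ can be taken independent of $\xi$ because $K(\xi)\in[K_1,K_2]$ by \eqref{KAB}. Your additional comments on the maximum principle for part~(i) and on tracking the $\xi$-dependence through $K(\xi)$ are precisely the points the paper leaves implicit.
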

 By \eqref{KAB}, the viscosity coefficient $K(\xi)$ have the uniform positive upper and lower bounds $K_1,K_2$ respectively. Then we can choose the constant $C$ in \cref{vl1} (ii) depending on $K_1,K_2$, but independent of $\xi\in \mathbb{R}$.

Nevertheless, we shall prove some properties of $v(t,\xi,\eta)$ with respect to the variable $\xi$.
First, $v_{\xi}(t,\xi,\eta)$ satisfies the equation
\begin{equation}\label{vae}
	\left\{\begin{array}{ll}
		\di v_{\xi t}+v_{\xi}v_{\eta}+vv_{\xi\eta}=K(\xi)v_{\xi \eta\eta}+K'(\xi)v_{\eta\eta},\\[2mm]
		\di	v_{\xi}(0,\xi,\eta)=0.
	\end{array}
	\right.
\end{equation}
Then we have the following lemma for the properties of $v_{\xi}(t,\xi,\eta)$.
\begin{Lem}\label{vl2} For the smooth solution $v_{\xi}(t,\xi,\eta)$ of \eqref{vae}, it holds that
	
	\begin{itemize}
		\item[{\rm (i)}.]  Denote $G_1:=G^{-1}(-\epsilon_0)$ and $G_2:=G^{-1}(\epsilon_0)$. If the initial discontinuity curve $y=\varphi(x)$ satisfies \eqref{cl}, then the support of $v_{\xi}(t,\xi,\eta)$ satisfies
		\begin{equation}\label{sva}
			 supp~v_{\xi}(t,\xi,\eta)\subseteq \mathbb R_+\times D,~ D\triangleq\{(\xi,\eta)\in\mathbb R^2|~\xi\in[G_1,G_2],~\eta\in\mathbb R\},
			\end{equation}
			where the $\epsilon_0$ is defined in the mollifier $\alpha_{\epsilon_0}(x)$ in \eqref{Lc},
		\item[{\rm (ii)}.] There exists a generic positive constant $C=C(p,u_-,u_+,G_1,G_2)$ such that
	\begin{align}\label{va1}
		& \di \norm{v_{\xi}}_{L^1(\mathbb R^2)}\leq C\ln(1+t),\\ \label{vaa1}
		& \di \norm{v_{\xi\xi}}_{L^1(\mathbb R^2)}\leq C\ln(1+t),\\
	\label{vap}
			& \di \norm{v_{\xi}}_{L^p(\mathbb R^2)}\leq C(1+t)^{-\frac{1}{2}+\frac{1}{2p}}\ln(1+t),~2\leq p<+\infty,	\\
			\label{vainf}
			& \di	\norm{v_{\xi}}_{L^{\infty}(\mathbb R^2)}\leq C(1+t)^{-\frac{1}{2}}\ln(3+t),
			\\
			\label{vabp}
			& \di \norm{v_{\xi\eta}}_{L^p(\mathbb R^2)}\leq C(1+t)^{-1+\frac{1}{2p}}\ln(3+t),~2\leq p<+\infty,\\
	\label{vaap}
				& \di \norm{v_{\xi\xi}}_{L^p(\mathbb R^2)}\leq C(1+t)^{-\frac{1}{2}+\frac{1}{2p}}\ln^{3}(3+t),~2\leq p<+\infty.
		\end{align}
	\end{itemize}	
\end{Lem}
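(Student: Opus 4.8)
\emph{Proof plan.} Begin with (i). By the mollification \eqref{Lc}, $\varphi'=L_{\epsilon_0}'$ equals the constant $k_1$ on $(-\infty,-\epsilon_0]$ and the constant $k_2$ on $[\epsilon_0,+\infty)$, and by \eqref{2DH} the map $G$ is an increasing diffeomorphism of $\mathbb R$; hence \eqref{coeff} shows that $K(\xi)$ is constant for $\xi\le G_1:=G^{-1}(-\epsilon_0)$ and constant for $\xi\ge G_2:=G^{-1}(\epsilon_0)$, while $K'(\xi),K''(\xi)$ are bounded with support in $[G_1,G_2]$. Since equation \eqref{v1} depends on $\xi$ only through $K(\xi)$ and its datum $w_0(\eta)$ is $\xi$-independent, for every fixed $\xi\le G_1$ the profile $v(t,\xi,\cdot)$ solves one and the same 1D Cauchy problem, hence is independent of $\xi$ there by uniqueness; similarly for $\xi\ge G_2$. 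Thus $v_\xi\equiv0$ outside $\{G_1\le\xi\le G_2\}$, and as $v_\xi(0,\cdot,\cdot)\equiv0$ this proves \eqref{sva}.

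For (ii) we use the representation $v(t,\xi,\eta)=V(t,K(\xi),\eta)$, where $V(t,\kappa,\eta)$ solves the 1D viscous Burgers equation $V_t+VV_\eta=\kappa V_{\eta\eta}$ with datum $w_0(\eta)$ and viscosity parameter $\kappa$ ranging over the compact interval $[K_1,K_2]$; then $v_\xi=K'(\xi)V_\kappa$ and $v_{\xi\xi}=K'(\xi)^2V_{\kappa\kappa}+K''(\xi)V_\kappa$, so — $K',K''$ being bounded with compact support in $\xi$ — it suffices to bound $\|V_\kappa\|_{L^p_\eta}$, $\|V_{\kappa\eta}\|_{L^p_\eta}$, $\|V_{\kappa\kappa}\|_{L^p_\eta}$ uniformly in $\kappa\in[K_1,K_2]$ and then integrate in $\xi$ over $[G_1,G_2]$. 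Differentiating the equation for $V$ once and then twice in $\kappa$ gives, in conservation form,
\begin{align*}
\partial_t V_\kappa+\partial_\eta(VV_\kappa)&=\kappa\,\partial_{\eta\eta}V_\kappa+V_{\eta\eta},\\
\partial_t V_{\kappa\kappa}+\partial_\eta(VV_{\kappa\kappa})+2V_\kappa V_{\kappa\eta}&=\kappa\,\partial_{\eta\eta}V_{\kappa\kappa}+2V_{\kappa\eta\eta},
\end{align*}
both with zero initial data; each is a 1D linear advection–diffusion equation with bounded drift $V\in(u_-,u_+)$, uniformly elliptic diffusion $\kappa\in[K_1,K_2]$, and a right-hand side built only from quantities whose sharp decay is recorded in \cref{vl1} (and, for $V_{\kappa\kappa}$, the lower-order $V_\kappa,V_{\kappa\eta}$).

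The argument is then the classical 1D hierarchy. The $L^1$ bounds \eqref{va1}, \eqref{vaa1} come first: multiplying by $\mathrm{sgn}(V_\kappa)$ (resp.\ $\mathrm{sgn}(V_{\kappa\kappa})$) and integrating in $\eta$, the transport term integrates to zero and the diffusion term is $\le0$ (Kato's inequality), leaving $\tfrac{d}{dt}\|V_\kappa\|_{L^1_\eta}\le\|V_{\eta\eta}\|_{L^1_\eta}\le C(1+t)^{-1}$ by \eqref{vbbp}, whence $\|V_\kappa\|_{L^1_\eta}\le C\ln(1+t)$, and likewise for $V_{\kappa\kappa}$ after controlling $\|V_\kappa V_{\kappa\eta}\|_{L^1_\eta}$ and $\|V_{\kappa\eta\eta}\|_{L^1_\eta}$. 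For the $L^p$ and $L^\infty$ decay one runs $L^p$-energy estimates in $\eta$ for fixed $\kappa$: on the left appear the good terms $(p-1)\!\int V_\eta|V_\kappa|^p\,d\eta$ and $c_p\|\partial_\eta(|V_\kappa|^{p/2})\|_{L^2_\eta}^2$; the source $V_{\eta\eta}$ is estimated directly in $L^p_\eta$ via \eqref{vbbp}, while for $V_{\kappa\kappa}$ the derivative-losing source $2V_{\kappa\eta\eta}$ is first integrated by parts onto $V_{\kappa\kappa}$ and split into a dissipative piece absorbed on the left plus $\int|V_{\kappa\kappa}|^{p-2}V_{\kappa\eta}^2$. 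Combining this with the Gagliardo–Nirenberg inequalities of \cref{esl}, the decay of $V_\eta,V_{\eta\eta}$ from \cref{vl1}, the just-proved $L^1_\eta$ bound as base case, and the time-weighted interpolation of \cref{est} (as in \cite{HN1991,KT2004}), one upgrades a~priori boundedness to the stated rates: $\|V_\kappa\|_{L^p_\eta}\le C(1+t)^{-\frac12+\frac1{2p}}\ln(3+t)$, then $\|V_{\kappa\eta}\|_{L^p_\eta}\le C(1+t)^{-1+\frac1{2p}}\ln(3+t)$, then $\|V_\kappa\|_{L^\infty_\eta}\le C(1+t)^{-\frac12}\ln(3+t)$ and $\|V_{\kappa\eta}\|_{L^\infty_\eta}\le C(1+t)^{-1}\ln(3+t)$ by interpolation, and finally $\|V_{\kappa\kappa}\|_{L^p_\eta}\le C(1+t)^{-\frac12+\frac1{2p}}\ln^3(3+t)$, the extra logarithms entering because the quadratic source $2V_\kappa V_{\kappa\eta}$ has $L^1_\eta$-norm of order $(1+t)^{-1}\ln^2(3+t)$, which is then integrated in time. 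Multiplying by $|K'(\xi)|$, $|K'(\xi)|^2$, $|K''(\xi)|$ and integrating over the compact set $\{G_1\le\xi\le G_2\}$ yields \eqref{va1}–\eqref{vaap}.

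The main obstacle is precisely the loss of $\eta$-derivatives in the sources ($V_{\eta\eta}$ for $V_\kappa$, $2V_{\kappa\eta\eta}$ for $V_{\kappa\kappa}$): no plain energy bookkeeping closes, and one must trade these derivatives either against an $\eta$-integration by parts producing dissipative terms absorbable on the left, or equivalently against the parabolic smoothing $(t-s)^{-1/2}$ of the underlying heat-type semigroup, always in concert with the sharp $(1+t)$-decay of the lower-order quantities from \cref{vl1}, and always with constants uniform in $\kappa\in[K_1,K_2]$. That uniformity — together with the compactness of the $\xi$-support of $K',K''$, which is exactly what makes these error terms $L^p$-integrable over $\mathbb R^2$ — is what the reduction to the modified polyline in \cref{SEC32} is there to provide; carrying the logarithmic losses through the chain $V_\kappa\rightsquigarrow V_{\kappa\eta}\rightsquigarrow V_{\kappa\kappa}$ accounts for the $\ln$-factors in \eqref{vap}–\eqref{vaap}.
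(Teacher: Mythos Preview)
Your plan is essentially correct and follows the same route as the paper: both use the chain-rule identity $v_\xi=K'(\xi)\,\partial_K v$ together with $\mathrm{supp}\,K'\subset[G_1,G_2]$ for (i), and for (ii) both run the 1D hierarchy (mollified-sign/Kato $L^1$ estimate, then time-weighted $L^p$ energy estimates using \cref{esl}--\cref{est} and the decay from \cref{vl1}, then $L^p$ for the $\eta$-derivative, then $L^\infty$ by interpolation, finally $v_{\xi\xi}$), with the compact $\xi$-support converting $L^p_\eta$ bounds into $L^p(\mathbb R^2)$ bounds. Your parametrization $v(t,\xi,\eta)=V(t,K(\xi),\eta)$ is a clean repackaging of exactly what the paper does when it ``fixes $\xi$'' throughout.

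One point to tighten: you write that the $L^1$ bounds \eqref{va1} and \eqref{vaa1} ``come first'' and that \eqref{vaa1} follows ``after controlling $\|V_\kappa V_{\kappa\eta}\|_{L^1_\eta}$ and $\|V_{\kappa\eta\eta}\|_{L^1_\eta}$''. The second quantity is not available directly, and the first requires the $L^2_\eta$ bounds on $V_\kappa,V_{\kappa\eta}$, which in turn rest on the $L^p$ step you place later. In the paper the order is: $L^1$ for $v_\xi$, then $L^p$ for $v_\xi$, then $L^p$ for $v_{\xi\eta}$, and only then the $L^1$ (and $L^p$) bound for $v_{\xi\xi}$; moreover the derivative-losing source $2V_{\kappa\eta\eta}$ in the $V_{\kappa\kappa}$--$L^1$ estimate is handled not by bounding its $L^1_\eta$-norm but by the same integration-by-parts against the mollified sign (producing a term absorbed by the diffusion plus $\int v_{\xi\eta}^2\,d\eta$) that you correctly describe for the $L^p$ case. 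Reorder accordingly and your sketch matches the paper's proof.
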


\begin{proof}
First, we prove property (i). From \eqref{coeff} and \eqref{Ga}, we have
\begin{equation}\label{Kd}
K^\prime(\xi)=\frac{2\big(1+\varphi'(G(\xi))\big)\varphi''(G(\xi))G^\prime(\xi)}{(1-\varphi'(G(\xi)))^3}=\frac{2(1+\varphi'(G(\xi)))}{(1-\varphi'(G(\xi)))^4}\varphi''(G(\xi)).
\end{equation}
By \eqref{var2}, if $\left| G(\xi)\right| \geq \epsilon_0$, then $\varphi''(G(\xi))=0$,  thus \begin{equation}\label{G0}K'(\xi)\equiv0~\text{for}~ \left| G(\xi)\right| \geq \epsilon_0.\end{equation}
Using the Hopf-Cole transformation,  the solution to \eqref{v1} can be expressed by
$$v(t,\xi,\eta)=\frac{\di \int_{\mathbb{R}}\frac{\eta-y}{t}\mathrm {e}^{-\frac{(\eta-y)^2}{4K(\xi)t}-\frac{1}{2K(\xi)}\di \int_{0}^{y}w_0(z)dz}dy}{\di \int_{\mathbb{R}}\mathrm {e}^{-\frac{(\eta-y)^2}{4K(\xi)t}-\frac{1}{2K(\xi)}\di \int_{0}^{y}w_0(z)dz}dy},$$
then
$$
\begin{array}{ll}
\di \frac{\partial v}{\partial K}=&\frac{\di \int_{\mathbb{R}}\frac{\eta-y}{t}\mathrm {e}^{-\frac{(\eta-y)^2}{4K(\xi)t}-\frac{1}{2K(\xi)}\di \int_{0}^{y}w_0(z)dz}\bigg(\frac{(\eta-y)^2}{4K^2(\xi)t}+\frac{\di \int_{0}^{y}w_0(z)dz}{2K^2(\xi)}\bigg)dy}{\di \int_{\mathbb{R}}\mathrm {e}^{-\frac{(\eta-y)^2}{4K(\xi)t}-\frac{1}{2K(\xi)}\di \int_{0}^{y}w_0(z)dz}dy}
\\
&-\quad\frac{\di\int_{\mathbb{R}}\mathrm {e}^{-\frac{(\eta-y)^2}{4K(\xi)t}-\frac{1}{2K(\xi)}\di \int_{0}^{y}w_0(z)dz}\bigg(\frac{(\eta-y)^2}{4K^2(\xi)t}+\frac{\di \int_{0}^{y}w_0(z)dz}{2K^2(\xi)}\bigg)dy}{\di \bigg(\int_{\mathbb{R}}\mathrm {e}^{-\frac{(\eta-y)^2}{4K(\xi)t}-\frac{1}{2K(\xi)}\di \int_{0}^{y}w_0(z)dz}dy\bigg)^2}.
\end{array}
$$
Since $0<K_1\leq K(\xi)\leq K_2$ for positive constants $K_1$ and $K_2$, $\frac{\partial v}{\partial K}$ is smooth and bounded for $\xi \in \mathbb{R}.$ By the chain rule
\begin{equation}\label{vkk}
	v_{\xi}=\frac{\partial v}{\partial K}K'(\xi),
\end{equation}
 we can prove the property (i) by combining \eqref{G0} and \eqref{vkk}.

Next, the proof of property (ii) is divided into the following four steps.

\noindent$\mathbf{\underline{Step~1}.~}$ We first prove the $L^1$-estimate \eqref{va1}. 

Motivated by \cite{Ito1996}, we let $j_\delta(x)$ be the standard mollifier in $\mathbb{R}$ and $sgn(\tau)$ be the sign function.  Set \begin{equation*} s_\delta(\lambda) :=j_{\delta}\convolution sgn(\lambda)=\int_{\mathbb R}j_{\delta}(\lambda-\tau)sgn(\tau)d\tau,\end{equation*}
and
\begin{equation*}
\Phi_\delta(\lambda) :=\int_{0}^{\lambda}s_{\delta}(\beta)d\beta.
\end{equation*}
Then
\begin{equation}\label{phid}
	s_\delta'(\lambda)=2j_{\delta}(\lambda).
\end{equation}
Multiplying \eqref{vae} by $s_\delta(v_{\xi})$ and then integrating the resulting equation over $\mathbb R^2$,
we have 
\begin{align*}
	\frac{d}{dt}\iint_{\mathbb R^2}\Phi_\delta(v_{\xi})d\xi d\eta +\iint_{\mathbb R^2}s_\delta(v_{\xi})(vv_{\xi})_{\eta}d\eta d\xi =&\iint_{\mathbb R^2}K(\xi)[(s_\delta(v_{\xi})v_{\xi\eta})_{\eta}-2j_{\delta}(v_{\xi})v_{\xi\eta}^2]d\eta d\xi \\
	&+\iint_{\mathbb R^2}K'(\xi)s_\delta(v_{\xi})v_{\eta\eta}d\eta d\xi.
\end{align*}
By \eqref{sva}, we have
\begin{equation}\label{L1e}
\begin{array}{ll}
	\di \quad\frac{d}{dt}\iint_{\mathbb R^2}\Phi_\delta(v_{\xi})d\xi d\eta +\iint_{\mathbb R^2}2K(\xi)j_{\delta}(v_{\xi})v_{\xi\eta}^2d\eta\\[6mm]
	 \di =
\iint_{D}2j_\delta(v_{\xi})vv_{\xi}v_{\xi\eta}d\eta d\xi +\iint_{D}K'(\xi)s_\delta(v_{\xi})v_{\eta\eta}d\eta d\xi \\[6mm]
\di =\iint_{D}\left(\int_{0}^{v_{\xi}}2j_\delta(\alpha)\alpha d\alpha\right)_{\eta}vd\eta d\xi +\iint_{D}K'(\xi)s_\delta(v_{\xi})v_{\eta\eta}d\eta d\xi 
\\[6mm]
\di =-\iint_{D}\left(\int_{0}^{v_{\xi}}2j_\delta(\alpha)\alpha d\alpha\right)v_{\eta}d\eta d\xi +\iint_{D}K'(\xi)s_\delta(v_{\xi})v_{\eta\eta}d\eta d\xi 
\\[6mm]
\di \leq C\delta \norm{v_{\eta}}_{L^1(D)}+C\norm{v_{\eta\eta}}_{L^1(D)}\\[2mm]
\di \leq C\delta+C(1+t)^{-1},
\end{array}
\end{equation}
where we have used the fact
\begin{equation}\label{delta}
\left|\int_{0}^{v_{\xi}}j_\delta(\alpha)\alpha d\alpha\right|\leq\delta \int_{0}^{+\infty}j(\alpha)\alpha d\alpha\leq C\delta.
\end{equation}
Integrating \eqref{L1e}  from $0$ to $t$ yields that
\begin{align*}\iint_{\mathbb R^2}\Phi_\delta(v_{\xi})d\xi d\eta \leq C \delta t+C\ln (1+t).
\end{align*}
Then passing the limit $\delta \rightarrow 0+$, we prove the estimate \eqref{va1}.

\noindent$\mathbf{\underline{Step~2}.~}$ We prove $L^p$-estimate \eqref{vap} for $v_{\xi}$. 


Multiplying \eqref{vae} by $|v_{\xi}|^{p-2}v_{\xi}$ and integrating the resulting equation over $\mathbb R$ with respect to $\eta$,
we have 
\begin{align*}
	\frac{1}{p}\frac{d}{dt}\norm{v_{\xi}}_{L_{\eta}^p(\mathbb R))}^p+\frac{p-1}{p}\int_{\mathbb R}v_{\eta}|v_{\xi}|^pd\eta+(p-1)\int_{\mathbb R}K(\xi)|v_{\xi}|^{p-2}v_{\xi\eta}^2d\eta=\int_{\mathbb R}K'(\xi)v_{\eta\eta}|v_{\xi}|^{p-2}v_{\xi}d\eta.
	 \end{align*}
By \eqref{vbbp}, one has
\begin{align*}
	\int_{\mathbb R}K'(\xi)v_{\eta\eta}|v_{\xi}|^{p-2}v_{\xi}d\eta&\leq 	C\norm{v_{\eta\eta}}_{L_{\eta}^p(\mathbb R)}\norm{v_{\xi}}_{L_{\eta}^p(\mathbb R)}^{p-1}\\
	&\leq C(1+t)^{-\frac{3}{2}+\frac{1}{2p}}\norm{v_{\xi}}_{L_{\eta}^p(\mathbb R)}^{p-1}\\&\leq C(1+t)^{-1}\norm{v_{\xi}}_{L_{\eta}^p(\mathbb R)}^{p}+C(1+t)^{-\frac{p}{2}-\frac{1}{2}}.
	\end{align*}
Then we have
\begin{equation}\label{eqno1t}
\begin{aligned}
	&\frac{d}{dt}\norm{v_{\xi}}_{L_{\eta}^p(\mathbb R))}^p+(p-1)\int_{\mathbb R}v_{\eta}|v_{\xi}|^pd\eta+C\int_{\mathbb R}|v_{\xi}|^{p-2}v_{\xi\eta}^2d\eta\\
	\leq &C(1+t)^{-1}\norm{v_{\xi}}_{L_{\eta}^p(\mathbb R)}^{p}+C(1+t)^{-\frac{p}{2}-\frac{1}{2}}.
\end{aligned}
\end{equation}
By multiplying \eqref{eqno1t} by $(1+t)^\alpha$, where $\alpha$ is some sufficiently large positive constant, and then integrating the resulting equation from $0$ to $t$, and noting that $v_{\xi}|_{t=0}=0$, we have
\begin{align*}
	&(1+t)^\alpha\norm{v_{\xi}}_{L_{\eta}^p(\mathbb R))}^p+C_p\int_{0}^{t}\int_{\mathbb R}(1+\tau)^\alpha v_{\eta}|v_{\xi}|^pd\eta d\tau+C\int_{0}^{t}\int_{\mathbb R}(1+\tau)^\alpha |v_{\xi}|^{p-2}v_{\xi\eta}^2d\eta d\tau\\
	&\leq 2\int_{0}^{t}(1+\tau)^{\alpha-1}\norm{v_{\xi}}_{L_{\eta}^p(\mathbb R))}^pd\tau+C(1+t)^{\alpha-\frac{p}{2}+\frac{1}{2}}\\
	&\triangleq J_1+C(1+t)^{\alpha-\frac{p}{2}+\frac{1}{2}},
\end{align*}
By 
 \eqref{esj1}, we have
\begin{align*}
	J_1=CI(1,0,1)&\leq \epsilon\int_{0}^{t}(1+\tau)^{\alpha}\norm{\partial_{\eta} (|v_{\xi}|^{\frac{p}{2}})}_{L^2(\mathbb{R})}^2(\tau)d\tau+C_{\epsilon}\int_{0}^{t}(1+\tau)^{\alpha-\frac{p+1}{2}}\norm{v_{\xi}}_{L^1(\mathbb{R})}^p(\tau)d\tau\\
	&\leq \epsilon\int_{0}^{t}(1+\tau)^{\alpha}\norm{\partial_{\eta} (|v_{\xi}|^{\frac{p}{2}})}_{L^2(\mathbb{R})}^2(\tau)d\tau+C_{\epsilon}(1+t)^{\alpha-\frac{p}{2}+\frac{1}{2}}\ln^p(1+t).
\end{align*}
Therefore, we can get
\begin{align*}(1+t)^\alpha\norm{v_{\xi}}_{L_{\eta}^p(\mathbb R))}^p\leq C(1+t)^{\alpha-\frac{p}{2}+\frac{1}{2}}\ln^p(1+t)+C(1+t)^{\alpha-\frac{p}{2}+\frac{1}{2}},
\end{align*}
and then
\begin{align}\label{lp1}
	\norm{v_{\xi}}_{L_{\eta}^p(\mathbb R))}^p\leq C(1+t)^{-\frac{p}{2}+\frac{1}{2}}\ln^p(1+t),~p\geq 2.
\end{align}
Due to \eqref{sva}, we have
\begin{align}\label{lp2}
	\norm{v_{\xi}}_{L^p(\mathbb R^2)}^p=\iint_{D}|v_{\xi}|^pd\xi d\eta =\int_{G_1}^{G_2}\norm{v_{\xi}}_{L_{\eta}^p(\mathbb R))}^pd\xi
	\leq C_{G_1,G_2}(1+t)^{-\frac{p}{2}+\frac{1}{2}}\ln^p(1+t),
\end{align}
which complete the proof of \eqref{vap}.   

\noindent$\mathbf{\underline{Step~3}.~}$ We prove $L^p$-estimate \eqref{vabp} for $v_{\xi\eta}$ and $L^{\infty}$-estimate \eqref{vainf} for $v_{\xi}$. 

Differentiating \eqref{vae} with respect to $\eta$, and putting $W:=v_{\xi\eta}$, we can get
\begin{equation}\label{vab}
	W_t+2v_{\eta}W+vW_{\eta}+v_{\xi}v_{\eta\eta}=K(\xi)W_{\eta\eta}+K'(\xi)v_{\eta\eta\eta},
\end{equation}
Multiplying \eqref{vab} by $|W|^{p-2}W$ and
integrating the resulting equation over $\mathbb R$ with respect to $\eta$, we have
\begin{equation}\label{j12}
\begin{array}{ll}
	\di \frac{1}{p}\frac{d}{dt}\norm{W}_{L_{\eta}^p(\mathbb R))}^p+\frac{p-1}{p}\int_{\mathbb R}v_{\eta}|W|^pd\eta+(p-1)\int_{\mathbb R}K(\xi)|W|^{p-2}W_{\eta}^2d\eta\\[4mm]
	\di =(p-1)\int_{\mathbb R}v_{\eta}v_{\xi}|W|^{p-2}W_{\eta}d\eta-(p-1)\int_{\mathbb R}K'(\xi)v_{\eta\eta}|W|^{p-2}W_{\eta}d\eta\\[4mm]
	\di :=J_2+J_3.
\end{array}
\end{equation}
First, we have
\begin{align*}
	J_2&\leq\norm{v_{\eta}}_{L^{\infty}}\norm{v_{\xi}}_{L^p}\norm{W}_{L^p}^{\frac{p-2}{2}}
	\norm{\partial_{\eta} (|W|^{\frac{p}{2}})}_{L^2}\\
	&\leq\norm{v_{\eta}}_{L^{\infty}}\norm{v_{\xi}}_{L^p}^{1+\frac{p-2}{p+2}}
	\norm{\partial_{\eta} (|W|^{\frac{p}{2}})}_{L^2}^{1+\frac{p-2}{p+2}}\\
&\leq\epsilon \norm{\partial_{\eta} (|W|^{\frac{p}{2}})}_{L^2}^2+C_{\epsilon}\norm{v_{\eta}}_{L^{\infty}}^{\frac{p+2}{2}}\norm{v_{\xi}}_{L^p}^p\\
&\leq\epsilon \norm{\partial_{\eta} (|W|^{\frac{p}{2}})}_{L^2}^2+C_{\epsilon}(1+t)^{-p-\frac{1}{2}}\ln^p(1+t),
\end{align*}
where we have used the interpolation inequality \eqref{puin1} for $W$ and H\"{\rm o}lder inequality.
 Then by Young inequality and  the interpolation inequality \eqref{puin1} for $W$ again, we obtain
 \begin{align*}
	J_3&\leq\epsilon \int_{\mathbb R}|W|^{p-2}W_{\eta}^2d\eta+C_{\epsilon}\int_{\mathbb R}|W|^{p-2}v_{\eta\eta}^2d\eta\\
	&\leq\epsilon \int_{\mathbb R}|W|^{p-2}W_{\eta}^2d\eta+C_{\epsilon}\norm{v_{\eta\eta}}_{L^{p}}^2\norm{W}_{L^p}^{p-2}\\
	&\leq\epsilon \int_{\mathbb R}|W|^{p-2}W_{\eta}^2d\eta+C_{\epsilon}\norm{v_{\eta\eta}}_{L^{p}}^2\norm{v_{\xi}}_{L^p}^{\frac{2(p-2)}{p+2}}
	\norm{\partial_{\eta} (|W|^{\frac{p}{2}})}_{L^2}^{\frac{2(p-2)}{p+2}}\\
	&\leq\epsilon \norm{\partial_{\eta} (|W|^{\frac{p}{2}})}_{L^2}^2+C_{\epsilon}\norm{v_{\eta\eta}}_{L^{p}}^{\frac{p+2}{2}}\norm{v_{\xi}}_{L^p}^{\frac{p-2}{2}}\\
		&\leq\epsilon \norm{\partial_{\eta} (|W|^{\frac{p}{2}})}_{L^2}^2+C_{\epsilon}(1+t)^{-p-\frac{1}{2}}\ln^{\frac{p-2}{2}}(1+t).
\end{align*}
Substituting the above two estimates into \eqref{j12} yields that
\begin{align}\label{vabt}
	\frac{d}{dt}\norm{W}_{L_{\eta}^p(\mathbb R))}^p+C_1\int_{\mathbb R}v_{\eta}|W|^pd\eta+C_2\norm{\partial_{\eta} (|W|^{\frac{p}{2}})}_{L^2}^2\leq C(1+t)^{-p-\frac{1}{2}}\ln^{p}(3+t).
\end{align}
Multiplying \eqref{vabt} by $(1+t)^{\alpha}$ with some sufficiently large positive constant $\alpha$ and
integrating the resulting equation from $0$ to $t$, and noting that $v_{\xi\eta}|_{t=0}=0$,
we have
\begin{equation}\label{eee}
\begin{array}{ll}
	\di (1+t)^\alpha\norm{W}_{L_{\eta}^p(\mathbb R))}^p+C\int_{0}^{t}\int_{\mathbb R}(1+\tau)^\alpha v_{\eta}|W|^pd\eta d\tau+C\int_{0}^{t}(1+\tau)^\alpha\norm{\partial_{\eta} (|W|^{\frac{p}{2}})}_{L^2}^2(\tau)d\tau\\[4mm]
	\di \leq C\int_{0}^{t}(1+\tau)^{\alpha-1}\norm{W}_{L_{\eta}^p(\mathbb R)}^p(\tau)d\tau+C(1+t)^{\alpha-p+\frac{1}{2}}\ln^{p}(3+t)\\[4mm]
	\di \triangleq J_4+C(1+t)^{\alpha-p+\frac{1}{2}}\ln^{p}(3+t).
\end{array}
\end{equation}
By the interpolation inequality \eqref{puin1} and \eqref{lp1}, we can get
\begin{align*}
	J_4&\leq C\int_{0}^{t}(1+\tau)^{\alpha-1}\norm{\partial_{\eta} (|W|^{\frac{p}{2}})}_{L^2(\mathbb{R})}^{\frac{2p}{p+2 }}\norm{v_{\xi}}_{L^p(\mathbb{R})}^{\frac{2p}{p+2}}d\tau\\
	&\leq \epsilon \int_{0}^{t}(1+\tau)^\alpha\norm{\partial_{\eta} (|W|^{\frac{p}{2}})}_{L^2}^2+C_{\epsilon}\int_{0}^{t}(1+\tau)^{\alpha-\frac{p+2}{2}}\norm{v_{\xi}}_{L^p(\mathbb{R})}^pd\tau\\
		&\leq \epsilon \int_{0}^{t}(1+\tau)^\alpha\norm{\partial_{\eta} (|W|^{\frac{p}{2}})}_{L^2}^2+C_{\epsilon}(1+t)^{\alpha-p+\frac{1}{2}}\ln^{p}(1+t).
\end{align*}
Substituting the above estimate into \eqref{eee} implies that
\begin{align*}
(1+t)^\alpha\norm{W}_{L_{\eta}^p(\mathbb R))}^p\leq C(1+t)^{\alpha-p+\frac{1}{2}}\ln^{p}(3+t),
\end{align*}
the  \begin{equation}\label{1v}
	\norm{W}_{L_{\eta}^p(\mathbb R))}^p\leq C(1+t)^{-p+\frac{1}{2}}\ln^{p}(3+t).
\end{equation} Similar to \eqref{lp2}, we can obtain
\begin{align*}
\norm{W}_{L^p(\mathbb{R}^2)}^p\leq C(1+t)^{-p+\frac{1}{2}}\ln^{p}(3+t),
\end{align*}
which prove \eqref{vabp}.

Now, we can prove the $L^{\infty}$-estimate \eqref{vainf} for $v_{\xi}$. Fixed the variable $\xi$, then 
by the estimate \eqref{lp1} and \eqref{1v},
we have  \begin{align*}\norm{v_{\xi}}_{L_{\eta}^{\infty}(\mathbb R)}\leq C\norm{v_{\xi}}_{L_{\eta}^2(\mathbb R)}^{\frac{1}{2}}\norm{ v_{\xi\eta}}_{L_{\eta}^2(\mathbb R)}^{\frac{1}{2}}\leq C_{\xi}(1+t)^{-\frac{1}{2}}\ln(3+t).
\end{align*}
where $C_{\xi}$ is a positive constant depending on $\xi$. Due to \eqref{sva},	$ v_{\xi}(t,\xi,\eta)\equiv 0 ~\text{for}~\xi\notin[G_1,G_2].$ So we can choose a constant C depending on $G_1$ and $G_2$, but independent of $\xi$, such that
\begin{align*}\norm{v_{\xi}}_{L^{\infty}(\mathbb R^2)}\leq C(1+t)^{-\frac{1}{2}}\ln(3+t),
\end{align*}
this yields the estimate \eqref{vainf}.

\noindent$\mathbf{\underline{Step~4}.~}$ We prove $L^1$-estimate \eqref{vaa1} and $L^p$-estimate \eqref{vaap} for $v_{\xi\xi}$.

Differentiating \eqref{vae} with respect to $\xi$, and putting $Q:=v_{\xi\xi}$, we obtain
\begin{equation}\label{vaae}
	Q_{t}+v_{\eta}Q+vQ_{\eta}+2v_{\xi}v_{\xi\eta}=K(\xi)Q_{\eta\eta}+2K'(\xi)v_{\xi \eta\eta}+K''(\xi)v_{\eta\eta},
\end{equation}
Similar to Step 1, we using the functions $s_\delta(\lambda)$ and $\Phi_\delta(\lambda)$ to get the estimate $\norm{v_{\xi\xi}}_{L^1_{\eta}(\mathbb R)}$.
Multiplying \eqref{vaae} by $s_\delta(Q)$ and integrating the resulting equation over $\mathbb R$ with respect to $\eta$, we can arrive at
\begin{align*}
	&\frac{d}{dt}\int_{\mathbb R}\Phi_\delta(Q)d\eta+\int_{\mathbb R}s_\delta(Q)(vQ)_{\eta}d\eta+\int_{\mathbb R}s_\delta(Q)(v_{\xi}^2)_{\eta}d\eta\\=&\int_{\mathbb R}K(\xi)[(s_\delta(Q)Q_{\eta})_{\eta}-2j_{\delta}(Q)Q_{\eta}^2]d\eta+\int_{\mathbb R}2K'(\xi)s_\delta(Q)v_{\xi \eta\eta}d\eta+\int_{\mathbb R}K''(\xi)s_\delta(Q)v_{\eta\eta}d\eta.
\end{align*}
Then we have
\begin{align*}
	&\frac{d}{dt}\int_{\mathbb R}\Phi_\delta(Q)d\eta+2\int_{\mathbb R}K(\xi)j_{\delta}(Q)Q_{\eta}^2d\eta\\=&\int_{\mathbb R}2j_{\delta}(Q)Q_{\eta}vQd\eta+\int_{\mathbb R}2j_{\delta}(Q)Q_{\eta}v_{\xi}^2d\eta
	-\int_{\mathbb R}4K'(\xi)j_{\delta}(Q)Q_{\eta}v_{\xi\eta}d\eta+\int_{\mathbb R}K''(\xi)s_\delta(Q)v_{\eta\eta}d\eta\\
	\leq &2\int_{\mathbb R}(\int_{0}^{Q}j_\delta(\alpha)\alpha d\alpha)_{\eta}vd\eta+\epsilon\int_{\mathbb R}j_{\delta}(Q)Q_{\eta}^2d\eta+C_{\epsilon}\int_{\mathbb R}(v_{\xi}^4+v_{\xi\eta}^2)d\eta+C\norm{v_{\eta\eta}}_{L^1}\\
	\leq & C\delta \norm{v_{\eta}}_{L^1}+\epsilon\int_{\mathbb R}j_{\delta}(Q)Q_{\eta}^2d\eta+C_{\epsilon}(1+t)^{-\frac{3}{2}}\ln^4(1+t)+C(1+t)^{-1}
	\\\leq& C\delta+\epsilon\int_{\mathbb R}j_{\delta}(Q)Q_{\eta}^2d\eta+C_{\epsilon}(1+t)^{-1}.
\end{align*}
Integrating the above inequality from $0$ to $t$, we can get
\begin{align*}\int_{\mathbb R}\Phi_\delta(Q)d\eta\leq C \delta t+C\ln(1+t).
\end{align*}
Letting $\delta \rightarrow 0+$, we have
\begin{align}\label{vaa1e}
	\norm{v_{\xi\xi}}_{L^1_{\eta}(\mathbb R)}\leq C\ln(1+t).
\end{align}
Similar to \eqref{lp2}, we can derive  $L^1$-estimate \eqref{vaa1}.

Now we can prove $L^p$-estimate \eqref{vaap} for $v_{\xi\xi}$.
Multiplying \eqref{vab} by $|Q|^{p-2}Q~(p\geq 2)$ and
integrating  the resulting equation over $\mathbb R$ with respect to $\eta$, we have

\begin{align*}
	&\quad\frac{1}{p}\frac{d}{dt}\norm{Q}_{L_{\eta}^p(\mathbb R))}^p+\frac{p-1}{p}\int_{\mathbb R}v_{\eta}|Q|^pd\eta+(p-1)\int_{\mathbb R}K(\xi)|Q|^{p-2}Q_{\eta}^2d\eta\\&=-2\int_{\mathbb R}v_{\xi}v_{\xi\eta}|Q|^{p-2}Qd\eta-2(p-1)\int_{\mathbb R}K'(\xi)v_{\xi\eta}|Q|^{p-2}Q_{\eta}d\eta
	+\int_{\mathbb R}K''(\xi)v_{\eta\eta}|Q|^{p-2}Qd\eta\\
	&\leq \norm{v_{\xi}}_{L^{2p}}\norm{v_{\xi\eta}}_{L^{2p}}\norm{Q}_{L^p}^{p-1}+\epsilon \int_{\mathbb R}|Q|^{p-2}Q_{\eta}^2d\eta +C_{\epsilon}\int_{\mathbb R}\int_{\mathbb R}v_{\xi\eta}^2|Q|^{p-2}d\eta +C\norm{v_{\eta\eta}}_{L^p}\norm{Q}_{L^p}^{p-1}
	\\&\leq \epsilon \int_{\mathbb R}|Q|^{p-2}Q_{\eta}^2d\eta +C_{\epsilon}\norm{v_{\xi\eta}}_{L^p}^2\norm{Q}_{L^p}^{p-2}+C[\ln^2(3+t)+1](1+t)^{-\frac{3}{2}+\frac{1}{2p}}\norm{Q}_{L^p}^{p-1}\\
	&\leq \epsilon \int_{\mathbb R}|Q|^{p-2}Q_{\eta}^2d\eta +C\ln^{2}(3+t)(1+t)^{-2+\frac{1}{p}}\norm{Q}_{L^p}^{p-2}+C(1+t)^{-\frac{3}{2}+\frac{1}{2p}}\ln^{2}(3+t)\norm{Q}_{L^p}^{p-1}.
\end{align*}
Then we have
\begin{equation}\label{vaat}
	\begin{aligned}
	&\frac{d}{dt}\norm{Q}_{L_{\eta}^p(\mathbb R))}^p+C_p\int_{\mathbb R}v_{\eta}|Q|^pd\eta +C_{p,K_1}\int_{\mathbb R}|Q|^{p-2}Q_{\eta}^2d\eta \\&\leq C\ln^{2}(3+t)(1+t)^{-2+\frac{1}{p}}\norm{Q}_{L^p}^{p-2}+C(1+t)^{-\frac{3}{2}+\frac{1}{2p}}\ln^{2}(3+t)\norm{Q}_{L^p}^{p-1}.
\end{aligned}
\end{equation}
Multiplying \eqref{vaat} by $(1+t)^{\alpha}$ with some sufficiently large positive constant $\alpha$ and
integrating the resulting equation from $0$ to $t$, and noting that $v_{\xi\xi}|_{t=0}=0$,
we can get
\begin{equation}\label{QQQ}
\begin{aligned}
	&(1+t)^\alpha\norm{Q}_{L_{\eta}^p(\mathbb R))}^p+C_p\int_{0}^{t}\int_{\mathbb R}(1+\tau)^\alpha v_{\eta}|Q|^pd\eta d\tau+C_{p,K_1}\int_{0}^{t}(1+\tau)^\alpha\norm{\partial_{\eta} (|Q|^{\frac{p}{2}})}_{L^2}^2d\tau\\
	&\leq C\int_{0}^{t}(1+\tau)^{\alpha-1}\norm{Q}_{L^p}^p(\tau)d\tau+C\ln^2(3+t)\int_{0}^{t}(1+\tau)^{\alpha-2+\frac{1}{p}}\norm{Q}_{L^p}^{p-2}(\tau)d\tau\\&~~~+
	C\ln^2(3+t)\int_{0}^{t}(1+\tau)^{\alpha-\frac{3}{2}+\frac{1}{2p}}\norm{Q}_{L^p}^{p-1}(\tau)d\tau\\
	&\triangleq J_5+J_6+J_7.
\end{aligned}
\end{equation}
By \eqref{esj1} and \eqref{vaa1e}, we have
\begin{equation}\label{QQQ1}
	\begin{aligned}
J_5&=CI(1,0,1)\\&\leq \epsilon\int_{0}^{t}(1+\tau)^{\alpha}\norm{\partial_{\eta} (|Q|^{\frac{p}{2}})}_{L_{\eta}^2(\mathbb R)}^2(\tau)d\tau+C_{\epsilon}\int_{0}^{t}(1+\tau)^{\alpha-\frac{p+1}{2}}\norm{Q}_{L_{\eta}^1(\mathbb R)}^pd\tau\\
&\leq \epsilon\int_{0}^{t}(1+\tau)^{\alpha}\norm{\partial_{\eta} (|Q|^{\frac{p}{2}})}_{L_{\eta}^2(\mathbb R)}^2(\tau)d\tau+C_{\epsilon}(1+t)^{\alpha-\frac{p}{2}+\frac{1}{2}}\ln^{p}(1+t),
\end{aligned}
\end{equation}
\begin{equation}\label{QQQ2}
	\begin{aligned}
		J_6&=C\ln^2(3+t)I(2-\frac{1}{p},2,1)\\&\leq \epsilon\int_{0}^{t}(1+\tau)^{\alpha}\norm{\partial_{\eta} (|Q|^{\frac{p}{2}})}_{L_{\eta}^2(\mathbb R)}^2(\tau)d\tau+C_{\epsilon}\ln^{2p}(3+t)\int_{0}^{t}(1+\tau)^{\alpha-\frac{p+1}{2}}\norm{Q}_{L_{\eta}^1(\mathbb R)}^{\frac{p(p-2)}{2p-1}}(\tau)d\tau\\
		&\leq \epsilon\int_{0}^{t}(1+\tau)^{\alpha}\norm{\partial_{\eta} (|Q|^{\frac{p}{2}})}_{L_{\eta}^2(\mathbb R)}^2(\tau)d\tau+C_{\epsilon}(1+t)^{\alpha-\frac{p+1}{2}+1}\ln^{\frac{p(p-2)}{2p-1}+2p}(3+t)\\
		&\leq \epsilon\int_{0}^{t}(1+\tau)^{\alpha}\norm{\partial_{\eta} (|Q|^{\frac{p}{2}})}_{L_{\eta}^2(\mathbb R)}^2(\tau)d\tau+C_{\epsilon}(1+t)^{\alpha-\frac{p}{2}+\frac{1}{2}}\ln^{3p}(3+t),
	\end{aligned}
\end{equation}
and
\begin{equation}\label{QQQ3}
	\begin{aligned}
		J_7&=C\ln^2(3+t)I(\frac{3}{2}-\frac{1}{2p},1,1)\\&\leq \epsilon\int_{0}^{t}(1+\tau)^{\alpha}\norm{\partial_{\eta} (|Q|^{\frac{p}{2}})}_{L_{\eta}^2(\mathbb R)}^2(\tau)d\tau+C_{\epsilon,p}\ln^{2p}(3+t)\int_{0}^{t}(1+\tau)^{\alpha-\frac{p+1}{2}}\norm{Q}_{L_{\eta}^1(\mathbb R)}^{\frac{2p(p-1)}{3p-1}}(\tau)d\tau\\
		&\leq \epsilon\int_{0}^{t}(1+\tau)^{\alpha}\norm{\partial_{\eta} (|Q|^{\frac{p}{2}})}_{L_{\eta}^2(\mathbb R)}^2(\tau)d\tau+C_{\epsilon}(1+t)^{\alpha-\frac{p+1}{2}+1}\ln^{\frac{2p(p-1)}{3p-1}+2p}(3+t)\\
		&\leq \epsilon\int_{0}^{t}(1+\tau)^{\alpha}\norm{\partial_{\eta} (|Q|^{\frac{p}{2}})}_{L_{\eta}^2(\mathbb R)}^2(\tau)d\tau+C_{\epsilon}(1+t)^{\alpha-\frac{p}{2}+\frac{1}{2}}\ln^{3p}(3+t).
	\end{aligned}
\end{equation}
Substituting the estimates \eqref{QQQ1}-\eqref{QQQ3} into \eqref{QQQ} and taking $\epsilon$ suitably small, we have
\begin{align*}
	&(1+t)^\alpha\norm{Q}_{L_{\eta}^p(\mathbb R))}^p\leq C(1+t)^{\alpha-\frac{p}{2}+\frac{1}{2}}\ln^{3p}(3+t).
\end{align*}
Therefore, we can get
\begin{align*}
\norm{Q}_{L_{\eta}^p(\mathbb R))}^p\leq C(1+t)^{-\frac{p}{2}+\frac{1}{2}}\ln^{3p}(3+t),~p\geq 2.
\end{align*}
Similar to \eqref{lp2}, we have
\begin{align}\label{vabr2}
	\norm{Q}_{L^p(\mathbb R^2)}^p\leq C(1+t)^{-\frac{p}{2}+\frac{1}{2}}\ln^{3p}(3+t),
\end{align}
which prove \eqref{vaap}. 
Then the proof of \cref{vl2} is completed.
\end{proof}

\subsection{Decay Estimates for the Perturbation}
Set the perturbation of the solution $u(t,\xi,\eta)$ to \eqref{equ} around the viscous profile $v(t,\xi,\eta)$ in \eqref{v1} by
\begin{equation*}
	\phi(t,\xi,\eta):=u(t,\xi,\eta)-v(t,\xi,\eta).
\end{equation*}
Then the perturbation $\phi(t,\xi,\eta)$ satisfies
\begin{equation}\label{eqphi}
	\phi_t+\phi\phi_{\eta} +v_{\eta}\phi+v\phi_{\eta}=K(\xi)\phi_{\eta\eta }+A(\xi)\phi_{\xi\eta}+2\phi_{\xi\xi}+B(\xi)\phi_{\eta}+A(\xi)v_{\xi\eta}+2v_{\xi\xi}+B(\xi)v_{\eta},
\end{equation}
with the initial data
\begin{equation}\label{phi0}
	\phi(0,\xi,\eta)= \phi_0(
	\xi,\eta):=u_0(\xi,\eta)-w_0(\eta).
\end{equation}

We have the following proposition for the perturbation $\phi(t,\xi,\eta)$.
\begin{Prop}\label{decayphi}
	If $\phi_0(\xi,\eta)\in H^1(\mathbb{R}^2)\cap L^\infty(\mathbb{R}^2)$, then the Cauchy problem \eqref{eqphi}-\eqref{phi0} has a unique global solution $\phi\in C([0,+\infty);~H^1(\mathbb{R}^2))$.
	Moreover, 
	it holds that
	\begin{align}\label{phi5}
		&\di \|{\phi}	\|_{L^6(\mathbb R^2)}\leq 	\|{\phi_0}	\|_{L^6(\mathbb R^2)}+C\ln (3+t),\\
\label{phip}
		&\di \|{\phi}	\|_{L^p(\mathbb R^2)}\leq C(1+t)^{-\frac{1}{8} +\frac{1}{p}}\ln^3(3+t),~6< p<+\infty,\\
	\label{phibp}
		&\di \|{\phi_{\eta}}	\|_{L^p(\mathbb R^2)}\leq C_{\delta}(1+t)^{-\frac{1}{4}+\delta +\frac{7}{4p}}\ln^7(3+t),~~6<p<+\infty,\\
		\label{phiap}
		&\di \|{\phi_{\xi}}	\|_{L^p(\mathbb R^2)}\leq C(1+t)^{-\frac{1}{8}+\frac{2}{p}}\ln^3(3+t),~6< p<+\infty,
	\end{align}
	where $\delta>0$ is an arbitrarily small positive constant, and $C_{\delta}$ is a positive constant depending on $\delta$.
\end{Prop}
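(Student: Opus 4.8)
The plan is to prove the a priori bounds \eqref{phi5}--\eqref{phiap} by a carefully ordered time--weighted $L^p$ energy argument; local solvability of \eqref{eqphi}--\eqref{phi0} in $H^1(\R^2)\cap L^\infty(\R^2)$ is standard, since \eqref{equ} is uniformly parabolic by \eqref{para} with smooth, bounded coefficients, and the a priori estimates extend the local solution to a global one by continuation. \textbf{Basic $L^p$ identity.} Multiplying \eqref{eqphi} by $|\phi|^{p-2}\phi$ and integrating over $\R^2$, the nonlinear term drops since $\iint_{\R^2}\phi\phi_{\eta}|\phi|^{p-2}\phi\,d\xi d\eta=\frac{1}{p+1}\iint_{\R^2}\partial_{\eta}(|\phi|^p\phi)\,d\xi d\eta=0$, and $\iint_{\R^2}(v_{\eta}\phi+v\phi_{\eta})|\phi|^{p-2}\phi\,d\xi d\eta=\frac{p-1}{p}\iint_{\R^2}v_{\eta}|\phi|^p\,d\xi d\eta\ge 0$; the diffusion yields $(p-1)\iint_{\R^2}|\phi|^{p-2}(K\phi_{\eta}^2+A\phi_{\xi}\phi_{\eta}+2\phi_{\xi}^2)\,d\xi d\eta\ge C_{p,d}\norm{\nabla(|\phi|^{p/2})}_{L^2(\R^2)}^2$ by \eqref{para}. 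The three source terms are integrated by parts — $A(\xi)v_{\xi\eta}$ in $\eta$, $2v_{\xi\xi}$ in $\xi$, and, writing $B(\xi)=\partial_{\xi}\Theta(\xi)$ with $\Theta(\xi):=-2/(1-\varphi'(G(\xi)))$ bounded, the term $B(\xi)v_{\eta}$ in $\xi$ and then once more in $\eta$ — so that the new boundary--free integrands are bounded by $\epsilon\iint|\phi|^{p-2}(\phi_{\xi}^2+\phi_{\eta}^2)$, absorbed by the diffusion, plus $C\iint(v_{\xi}^2+v_{\eta}^2)|\phi|^{p-2}$. This is exactly \eqref{b3}.

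\textbf{Crude bounds and the $L^6$ bound.} For $p=2,4$ the right side of \eqref{b3} is controlled using \cref{vl1,vl2} and the support property of $v_{\xi}$, giving $\norm{\phi}_{L^2(\R^2)}^2+\norm{\phi}_{L^4(\R^2)}^4\le C(1+t)$, i.e.\ \eqref{b2}. For $p>4$ I exploit $\operatorname{supp}v_{\xi}\subseteq\R_+\times[G_1,G_2]\times\R$ from \eqref{sva}: since $v_{\xi}^2(t,\xi,\eta)=\partial_{\xi}\!\int_{G_1}^{\xi}v_{\xi}^2(t,\alpha,\eta)\,d\alpha$ and $\int_{G_1}^{\xi}\le\int_{G_1}^{G_2}$, one more integration by parts in $\xi$ together with \eqref{vainf} gives $\iint_{\R^2}v_{\xi}^2|\phi|^{p-2}\le\epsilon\norm{\nabla(|\phi|^{p/2})}_{L^2(\R^2)}^2+C_{\epsilon}(1+t)^{-2}\ln^4(3+t)\norm{\phi}_{L^{p-4}(\R^2)}^{p-4}$, while $\iint_{\R^2}v_{\eta}^2|\phi|^{p-2}\le\norm{v_{\eta}}_{L^\infty}^2\norm{\phi}_{L^{p-2}(\R^2)}^{p-2}\le C(1+t)^{-2}\norm{\phi}_{L^{p-2}(\R^2)}^{p-2}$; hence \eqref{b3} reduces to \eqref{ggg}. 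Taking $p=6$ and inserting \eqref{b2} makes the right side of \eqref{ggg} integrable up to logarithms, so $\norm{\phi}_{L^6(\R^2)}\le\norm{\phi_0}_{L^6(\R^2)}+C\ln(3+t)$, which is \eqref{phi5}.

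\textbf{Weighted decay for $p>6$ and the derivative estimates.} Multiplying \eqref{b3} by $(1+t)^{\alpha}$ with $\alpha$ large and integrating in $t$, the term $\alpha\int_0^t(1+\tau)^{\alpha-1}\norm{\phi}_{L^p}^p\,d\tau$ is handled by \cref{est}, specifically \eqref{esj3} with $q=6$: it produces $\epsilon\int_0^t(1+\tau)^{\alpha}\norm{\nabla(|\phi|^{p/2})}_{L^2}^2\,d\tau$ (absorbed by the left side) and a remainder $\lesssim\int_0^t(1+\tau)^{\alpha-p/6}\norm{\phi}_{L^6}^p\,d\tau\le C(1+t)^{\alpha-p/6+1}\ln^p(3+t)$ by \eqref{phi5}, while the two source terms are bounded as in the previous step, with $\norm{\phi}_{L^{p-4}}$ and $\norm{\phi}_{L^{p-2}}$ estimated by $L^2$--$L^6$ interpolation when the exponent is $\le 6$ and otherwise by \eqref{phip} at a strictly smaller exponent. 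Since $p\mapsto p-2\mapsto\cdots$ enters $[2,6]$ after finitely many steps, a finite induction closes \eqref{phip}. Finally \eqref{phibp} and \eqref{phiap} follow by differentiating \eqref{eqphi} in $\eta$, resp.\ $\xi$, and running the same time--weighted $L^p$ scheme for $\phi_{\eta}$, resp.\ $\phi_{\xi}$: the diffusion stays coercive after the analogous integrations by parts by \eqref{para}, the $v$--derivative source terms are controlled by \cref{vl1,vl2}, the couplings by \eqref{phip} and the interpolation \eqref{puin1}, and the quadratic term $\phi_{\eta}^2$ arising from $\partial_{\eta}(\phi\phi_{\eta})$ is absorbed via the a posteriori decay of $\norm{\phi}_{L^\infty}$ and $\norm{\phi_{\eta}}_{L^\infty}$ (which is also the source of the extra $\delta$ in the exponent of \eqref{phibp}).

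\textbf{Main obstacle.} The genuinely delicate point is the source term $\iint B(\xi)v_{\eta}|\phi|^{p-2}\phi$: a naive bound, as in \eqref{b1}, only yields $\norm{\phi}_{L^p}^p\le C(1+t)$, which is useless for decay. Removing this loss needs the two successive integrations by parts in $\xi$ above — first through $B=\partial_{\xi}\Theta$, then through the compact $\xi$--support of $v_{\xi}$ (hence of $B$, $v_{\xi\eta}$, $v_{\xi\xi}$), which turns $v_{\xi}^2$ into a $\xi$--antiderivative — and then arranging the weighted estimates so that every resulting time exponent closes. Making the induction on $p$ fit together, with the $L^6$ bound as base case (itself resting on the crude $L^2$, $L^4$ bounds and the estimates of \cref{vl2}) and with the sharp exponents supplied by \cref{est}, is where the argument is most technical.
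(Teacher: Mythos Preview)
Your plan is essentially correct and matches the paper through the $L^6$ bound \eqref{phi5}: the $L^p$ identity \eqref{b3}, the antiderivative trick on $v_{\xi}^2$ leading to \eqref{ggg}, and the crude $L^2$/$L^4$ bounds feeding into $p=6$ are exactly how the paper argues. One small repair: \eqref{b3} as written is only valid for $p>2$ (for $p=2$ the term $\iint v_{\eta}^2$ is infinite over $\R^2$); for $p=2$ you must instead bound $\iint B(\xi)v_{\eta}\phi$ directly using the compact $\xi$--support of $B$, as in \eqref{l2j3}.

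Where you differ from the paper is Step~2, the decay \eqref{phip}. You propose to keep using \eqref{ggg} and close by finite induction on $p$ through $\norm{\phi}_{L^{p-2}}$ and $\norm{\phi}_{L^{p-4}}$. The paper takes a different and cleaner route: it returns to the undifferentiated identity \eqref{inevb} and, for the dangerous term $J_3$, does \emph{not} invoke the $B=\partial_{\xi}\Theta$ trick again but simply uses that $B$ is supported in $\{\xi\in[G_1,G_2]\}$, so $J_3\le C\norm{v_{\eta}}_{L^p(D)}\norm{\phi}_{L^p}^{p-1}$ with $\norm{v_{\eta}}_{L^p(D)}\le C(1+t)^{-1+1/p}$. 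Then the whole right--hand side is in terms of $\norm{\phi}_{L^p}$ alone, and a single application of \cref{est} with $q=6$ closes \eqref{phip} directly, with no induction. Your inductive route also works (and in fact would yield a slightly sharper exponent near $-\tfrac16+\tfrac1p$), but it is heavier and requires separate handling of the base cases $p\in(6,10]$ by $L^2$--$L^6$ interpolation.

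For the derivative bounds, your description of absorbing the quadratic term via ``the a posteriori decay of $\norm{\phi}_{L^\infty}$'' is slightly off: at that stage $\norm{\phi}_{L^\infty}$ has not yet been established. The paper instead bounds $\iint\phi|\phi_{\eta}|^{p-1}|\phi_{\eta\eta}|$ by H\"older with $\norm{\phi}_{L^k}$ for a \emph{finite} large $k$, then uses \eqref{phip} and the interpolation \eqref{puin1}; the $\delta$ in \eqref{phibp} arises precisely from the $1/k$ loss when $k$ is large but finite (see the passage from \eqref{Wj1} to \eqref{Wj11}).
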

\begin{proof}
Since the local existence and uniqueness of the classical solution to  \eqref{eqphi} is standard, we omit its proof for brevity. Thus it suffices to prove the estimates \eqref{phi5}-\eqref{phibp}.

\noindent$\mathbf{Step~1.~}$ To prove the $L^{6}$ estimate \eqref{phi5},
we multiply \eqref{eqphi} by $|\phi|^{p-2}\phi$ and
integrate it over $\mathbb R^2$ with respect to $\xi$ and $\eta$.
We have
\begin{equation}\label{inevb}
	\begin{aligned}
	&\frac{1}{p}\frac{d}{dt}\norm{\phi}_{L^p(R^2)}^p+\frac{p-1}{p}\iint_{\mathbb R^2}v_{\eta}|\phi|^pd\xi d\eta +(p-1)\iint_{\mathbb R^2}|\phi|^{p-2}(K(\xi)\phi_{\eta}^2+A(\xi)\phi_{\xi}\phi_{\eta}+2\phi_{\xi}^2)d\xi d\eta \\&=
	\iint_{\mathbb R^2}A(\xi)v_{\xi\eta}|\phi|^{p-2}\phi d\xi d\eta +2\iint_{\mathbb R^2}v_{\xi\xi}|\phi|^{p-2}\phi d\xi d\eta +\iint_{\mathbb R^2}B(\xi)v_{\eta}|\phi|^{p-2}\phi d\xi d\eta \\
	&\triangleq J_1+J_2+J_3.
\end{aligned}
\end{equation}
The terms $J_1$ and $J_2$ can be estimated as 
\begin{align}\label{vbj1}
	J_1\leq C\iint_{\mathbb R^2}v_{\xi}|\phi|^{p-2}\phi_{\eta} d\xi d\eta \leq
	\epsilon \iint_{\mathbb R^2}|\phi|^{p-2}\phi_{\eta}^2d\xi d\eta +C_{\epsilon}\iint_{\mathbb R^2}v_{\xi}^2|\phi|^{p-2}d\xi d\eta ,
\end{align}
\begin{align}\label{vbj2}
	J_2\leq C\iint_{\mathbb R^2}v_{\xi}|\phi|^{p-2}\phi_{\xi} d\xi d\eta \leq
	\epsilon \iint_{\mathbb R^2}|\phi|^{p-2}\phi_{\xi}^2d\xi d\eta+C_{\epsilon}\iint_{\mathbb R^2}v_{\xi}^2|\phi|^{p-2}d\xi d\eta .
\end{align}
For $J_3$, if $p=2$, since $B(\xi)=0$ for $\xi\notin [G_1,G_2]$, then
\begin{equation}\label{l2j3}
\begin{aligned}	J_3&\leq \epsilon\iint_{\mathbb R^2}v_{\eta}|\phi|^2d\xi d\eta +C_{\epsilon}\iint_{ D}|B(\xi)|v_{\eta}d\xi d\eta\\ 
	&\leq \epsilon\iint_{\mathbb R^2}v_{\eta}|\phi|^2d\xi d\eta +C_\epsilon,
	\end{aligned}
\end{equation}
where the region $D$ is given by \eqref{sva}. Then combining \eqref{para}, \eqref{inevb} and \eqref{vbj1}-\eqref{l2j3}, we can get
\begin{align*}
	\frac{d}{dt}\norm{\phi}_{L^2(R^2)}^2\leq C\norm{ v_\xi}_{L^2(\mathbb R^2)}^2+C\leq C(1+t)^{-\frac{1}{2}}\ln^2(1+t)+C,
\end{align*}
it follows that
\begin{align}\label{esmate2}
	\norm{\phi}_{L^2(\mathbb R^2)}^2\leq C\norm{\phi_0}_{L^2(\mathbb R^2)}^2+C(1+t).
\end{align}
If $p>2$, by \eqref{Ga},  we have 
\begin{equation*}
	B(\xi)=\frac{-2\varphi''(G(\xi))}{(1-\varphi'(G(\xi)))^3}=\partial_{\xi}\frac{-2}{1-\varphi'(G(\xi))}.
\end{equation*}
Then it holds that
\begin{equation}\label{g2j3}
	\begin{aligned}
	J_3&=\iint_{\mathbb R^2}\frac{2}{1-\varphi'(G(\xi))}v_{\xi\eta}|\phi|^{p-2}\phi d\xi d\eta +\iint_{\mathbb R^2}\frac{2(p-1)}{1-\varphi'(G(\xi))}v_{\eta}|\phi|^{p-2}\phi_{\xi} d\xi d\eta \\
	&\leq
	\epsilon \iint_{\mathbb R^2}|\phi|^{p-2}\phi_{\eta}^2d\xi d\eta +\epsilon \iint_{\mathbb R^2}|\phi|^{p-2}\phi_{\xi}^2d\xi d\eta +
	C_{\epsilon}\iint_{\mathbb R^2}(v_{\xi}^2+v_{\eta}^2)|\phi|^{p-2}d\xi d\eta .
\end{aligned}\end{equation}
Therefore, when $p>2$, combining \eqref{para}, \eqref{vbj1}, \eqref{vbj2} and \eqref{g2j3}, we can obtain
\begin{equation}\label{esmatep}
\begin{aligned}
	&\frac{d}{dt}\norm{\phi}_{L^p(\mathbb R^2)}^p+C_p\iint_{\mathbb R^2}v_{\eta}|\phi|^pd\xi d\eta +C_{p,d}\norm{\nabla(|\phi|^{\frac{p}{2}})}_{L^2(\mathbb R^2)}^2\\\leq&
	C\iint_{\mathbb R^2}v_{\xi}^2|\phi|^{p-2}d\xi d\eta +C\iint_{\mathbb R^2}v_{\eta}^2|\phi|^{p-2}d\xi d\eta .
\end{aligned}
\end{equation}
Furthermore, 
when $p> 4$, since 
\begin{align*}
	v_{\xi}^2(t,\xi,\eta)=\partial_{\xi}(\int_{G_1}^{\xi}v_{\xi}^2(t,\alpha,\eta)d\alpha),
\end{align*}
and by \eqref{sva}, we have
 \begin{align*}
	\int_{G_1}^{\xi}v_{\xi}^2(t,\alpha,\eta )d\alpha\leq \int_{G_1}^{G_2}v_{\xi}^2(t,\alpha,\eta)d\alpha,~\xi\in \mathbb{R}.
\end{align*}
 Then, by integration by parts and \eqref{vainf}, we obtain 
\begin{equation}\label{n1}
	\begin{aligned}
	\iint_{\mathbb R^2}v_{\xi}^2|\phi|^{p-2}d\xi d\eta &=-(p-2)\iint_{\mathbb R^2}(\int_{G_1}^{\xi}v_{\xi}^2(t,\alpha,\eta)d\alpha)|\phi|^{p-4}\phi \phi_{\xi}d\xi d\eta \\
	&\leq \epsilon \iint_{\mathbb R^2}|\phi|^{p-4}\phi^2\phi_{\xi}^2d\xi d\eta +C_{\epsilon}\iint_{\mathbb R^2}(\int_{G_1}^{G_2}v_{\xi}^2(t,\alpha,\eta)d\alpha)^2|\phi|^{p-4}d\xi d\eta \\
	&\leq \epsilon \iint_{\mathbb R^2}|\phi|^{p-2}\phi_{\xi}^2d\xi d\eta +C_{\epsilon}(G_2-G_1)^2\norm{v_{\xi}}_{L^{\infty}(\mathbb R^2)}^4\iint_{\mathbb R^2}|\phi|^{p-4}d\xi d\eta \\
	&\leq \epsilon \iint_{\mathbb R^2}|\phi|^{p-2}\phi_{\xi}^2d\xi d\eta +C_{\epsilon}(1+t)^{-2}\ln^4(3+t)\norm{\phi}_{L^{p-4}}^{p-4},
\end{aligned}
\end{equation}
From \eqref{vbp}, it holds that
\begin{equation}\label{n2}
	\begin{aligned}
 \iint_{\mathbb R^2}v_{\eta}^2|\phi|^{p-2}d\xi d\eta\leq \norm{v_{\eta}}_{L^{\infty}(\mathbb R^2)}^2\norm{\phi}_{L^{p-2}}^{p-2}\leq C(1+t)^{-2}\norm{\phi}_{L^{p-2}}^{p-2}.
	\end{aligned}
\end{equation}
Substituting \eqref{n1} and \eqref{n2} into \eqref{esmatep} and taking $\epsilon$ suitably small, we can get
\begin{equation}\label{esmatep1}
	\begin{aligned}
	&\frac{d}{dt}\norm{\phi}_{L^p(\mathbb R^2)}^p+C_p\iint_{\mathbb R^2}v_{\eta}|\phi|^pd\xi d\eta +C_{p,d}\norm{\nabla(|\phi|^{\frac{p}{2}})}_{L^2(\mathbb R^2)}^2\\
	&\leq
	C(1+t)^{-2}\ln^4(3+t)\norm{\phi}_{L^{p-4}}^{p-4}+C(1+t)^{-2}\norm{\phi}_{L^{p-2}}^{p-2}, ~p>4.
\end{aligned}
\end{equation}
Thus choosing $p=6$ in \eqref{esmatep1}, we have
\begin{align}\label{esmate624}
	\frac{d}{dt}\norm{\phi}_{L^6(\mathbb R^2)}^6
	\leq
	C(1+t)^{-2}\ln^4(3+t)\norm{\phi}_{L^{2}}^2+C(1+t)^{-2}\norm{\phi}_{L^{4}}^{4}.
\end{align}

We next estimate $\norm{\phi}_{L^4(\mathbb{R}^2)}$. According to \eqref{esmatep},
we have
\begin{align*}	\frac{d}{dt}\norm{\phi}_{L^4(\mathbb R^2)}^4&\leq
	C\|{v_{\xi}}\|_{L^{4}(\mathbb{R}^2)}^2\|{\phi}\|_{L^4(\mathbb R^2)}^2+C\|{v_{\eta}}\|_{L^{\infty}(\mathbb{R}^2)}^2\|{\phi}\|_{L^2(\mathbb R^2)}^2\\
&\leq C(1+t)^{-\frac{3}{4}}\ln^2(1+t)\|{\phi}\|_{L^4(\mathbb R^2)}^2+C(1+t)^{-1},
\end{align*}
it follows that
\begin{align}\label{esmate3}
		\norm{\phi}_{L^4(\mathbb R^2)}^4\leq \|{\phi_0}\|_{L^4(\mathbb R^2)}^4
		+C(1+t).
\end{align}

Then combining \eqref{esmate624}, the $L^2$-estimate \eqref{esmate2} and  $L^4$-estimate\eqref{esmate3},  we can get
\begin{align}\label{esmate6}
	\frac{d}{dt}\norm{\phi}_{L^6(\mathbb R^2)}^6
	\leq
C(1+t)^{-1}\ln^4(3+t),
\end{align}
thus 
\begin{align}\label{esmate6e}
	\norm{\phi}_{L^6(\mathbb R^2)}^6
	\leq \norm{\phi_0}_{L^6(\mathbb R^2)}^6+
	C\ln^5(3+t),
\end{align}
we complete the proof of \eqref{phi5}.

\noindent$\mathbf{Step~2.~}$ We prove the $L^{p}$-estimate \eqref{phip} for $\phi$. From \eqref{vbj1} and \eqref{vbj2}, and note that $v_{\eta}\notin L^p(\mathbb R^2),~1\leq  p<\infty$, we calculate \eqref{inevb} as  
\begin{equation}\label{etp1}
	\begin{aligned}
	&\frac{d}{dt}\norm{\phi}_{L^p(\mathbb R^2)}^p+C_p\iint_{\mathbb R^2}v_{\eta}|\phi|^pd\xi d\eta +C_{p,d}\norm{\nabla(|\phi|^{\frac{p}{2}})}_{L^2(\mathbb R^2)}^2\\
	&\leq
	C\norm{v_{\xi}}_{L^{p}}^2\norm{\phi}_{L^{p}}^{p-2}+C\norm{v_{\eta}}_{L^{p}(D)}\norm{\phi}_{L^{p}}^{p-1}\\
	&\leq
	C(1+t)^{-1+\frac{1}{p}}\ln^{2}(1+t)\norm{\phi}_{L^{p}}^{p-2}+C(1+t)^{-1+\frac{1}{p}}\norm{\phi}_{L^{p}}^{p-1}.
\end{aligned}
\end{equation}
By multiplying \eqref{etp1} by $(1+t)^{\alpha}$, where $\alpha$ is some sufficiently large positive constant, and then integrating the resulting equation with respect to $t$, we have 
\begin{equation}\label{tphie}
	\begin{aligned}
	&\quad(1+t)^\alpha\norm{\phi}_{L^p(\mathbb R^2)}^p+C_p\int_{0}^{t}\int_{\mathbb R^2}(1+\tau)^\alpha v_{\eta}|\phi|^pd\xi d\eta d\tau+C_d\int_{0}^{t}(1+\tau)^\alpha\norm{\nabla (|\phi|^{\frac{p}{2}})}_{L^2}^2d\tau\\
	&\leq\norm{\phi_0}_{L^p(\mathbb R^2)}^p+ \int_{0}^{t}(1+\tau)^{\alpha-1}\norm{\phi}_{L^{p}}^{p}(\tau)d\tau+C\ln^2(1+t)\int_{0}^{t}(1+\tau)^{\alpha-1+\frac{1}{p}}\norm{\phi}_{L^p}^{p-2}d\tau\\
	&\quad+C\int_{0}^{t}(1+\tau)^{\alpha-1+\frac{1}{p}}\norm{\phi}_{L^p}^{p-1}d\tau\\
	&\triangleq \norm{\phi_0}_{L^p(\mathbb R^2)}^p +J_1+J_2+J_3.
\end{aligned}
\end{equation}
Applying inequality \eqref{esj3}
and the estimate \eqref{esmate6e}, we have
\begin{equation}\label{It1}
	\begin{aligned}
	J_1&=I(1,0,2)\\&\leq\epsilon\int_{0}^{t}(1+\tau)^{\alpha}\norm{\nabla (|\phi|^{\frac{p}{2}})}_{L^2(\mathbb{R}^2)}^2(\tau)d\tau+C_{\epsilon}\int_{0}^{t}(1+\tau)^{\alpha-\frac{p}{6}}\norm{\phi}_{L^6(\mathbb{R}^2)}^p(\tau)d\tau\\
		&\leq\epsilon\int_{0}^{t}(1+\tau)^{\alpha}\norm{\nabla (|\phi|^{\frac{p}{2}})}_{L^2(\mathbb{R}^2)}^2(\tau)d\tau+C_{\epsilon}(1+t)^{\alpha-\frac{p}{6}+1}\ln^p(3+t),
\end{aligned}
\end{equation}
\begin{equation}\label{It2}
	\begin{aligned}
	J_2&=C\ln^2(1+t)I(1-\frac{1}{p},2,2)\\&\leq\epsilon\int_{0}^{t}(1+\tau)^{\alpha}\norm{\nabla (|\phi|^{\frac{p}{2}})}_{L^2(\mathbb{R}^2)}^2(\tau)d\tau+C_{\epsilon}\ln^{2p}(1+t)\int_{0}^{t}(1+\tau)^{\alpha-\frac{p(p-1)}{8p-12}}\norm{\phi}_{L^6(\mathbb{R}^2)}^{\frac{6p(p-2)}{8p-12}}(\tau)d\tau\\
	&\leq\epsilon\int_{0}^{t}(1+\tau)^{\alpha}\norm{\nabla (|\phi|^{\frac{p}{2}})}_{L^2(\mathbb{R}^2)}^2(\tau)d\tau+C_{\epsilon}(1+t)^{\alpha-\frac{p}{8}+1}\ln^{3p}(3+t),
\end{aligned}
\end{equation}
\begin{equation}\label{It3}
	\begin{aligned}
	J_3&=CI(1-\frac{1}{p},1,2)\\&\leq\epsilon\int_{0}^{t}(1+\tau)^{\alpha}\norm{\nabla (|\phi|^{\frac{p}{2}})}_{L^2(\mathbb{R}^2)}^2(\tau)d\tau+C_{\epsilon}\int_{0}^{t}(1+\tau)^{\alpha-\frac{p(p-1)}{7p-6}}\norm{\phi}_{L^6(\mathbb{R}^2)}^{\frac{6p(p-1)}{7p-6}}(\tau)d\tau\\
	&\leq\epsilon\int_{0}^{t}(1+\tau)^{\alpha}\norm{\nabla (|\phi|^{\frac{p}{2}})}_{L^2(\mathbb{R}^2)}^2(\tau)d\tau+C_{\epsilon}(1+t)^{\alpha-\frac{p}{8}+1}\ln^{p}(3+t).
\end{aligned}
\end{equation}
Substituting  the estimates \eqref{It1}-\eqref{It3} into \eqref{tphie} and taking $\epsilon$ suitably small, we have
\begin{align*}
	&(1+t)^\alpha\norm{\phi}_{L^p(\mathbb R^2)}^p+C_p\int_{0}^{t}\int_{\mathbb R^2}(1+\tau)^\alpha v_{\eta}|\phi|^pd\xi d\eta d\tau+C_d\int_{0}^{t}(1+\tau)^\alpha\norm{\nabla (|\phi|^{\frac{p}{2}})}_{L^2}^2(\tau)d\tau\\
	&\leq\norm{\phi_0}_{L^p(\mathbb R^2)}^p+C(1+t)^{\alpha-\frac{p}{8}+1}\ln^{3p}(3+t).
\end{align*}
Then we can get
\begin{align}\label{epphi}
\norm{\phi}_{L^p(\mathbb R^2)}^p\leq C(1+t)^{-\frac{p}{8}+1}\ln^{3p}(3+t),~p>6,
\end{align}
thus we reach the desired estimate \eqref{phip}.

\noindent$\mathbf{Step~3.~}$ To prove the $L^p$-estimate $\eqref{phibp}$ for $\phi_{\eta}$, we differentiate \eqref{eqphi} with respect to $\eta$ and put $W:=\phi_{\eta}$. Then we have
\begin{equation}\label{ntphib}
\begin{aligned}
		W_t+\phi W_{\eta} +W^2+v_{\eta\eta}\phi+2v_{\eta}W+vW_{\eta}=&K(\xi)W_{\eta\eta }+A(\xi)W_{\xi\eta}+2W_{\xi\xi}+B(\xi)W_{\eta}\\&+A(\xi)v_{\xi\eta\eta}+2v_{\xi\xi\eta}+B(\xi)v_{\eta\eta}.
\end{aligned}
\end{equation}
Multiplying \eqref{ntphib} by $|W|^{p-2}W$ and
integrating it over $\mathbb R^2$, we obtain
\begin{equation}\label{M1}
	\begin{aligned}
	&\quad\frac{1}{p}\frac{d}{dt}\norm{W}_{L^p(R^2)}^p+\frac{p-1}{p}\iint_{\mathbb R^2}v_{\eta}|W|^pd\xi d\eta \\&\quad+(p-1)\iint_{\mathbb R^2}|W|^{p-2}(K(\xi)W_{\eta}^2+A(\xi)W_{\xi}W_{\eta}+2W_{\xi}^2)d\xi d\eta \\
	&=(p-1)\iint_{\mathbb R^2}\phi|W|^{p-2}WW_{\eta}d\xi d\eta +(p-1)\iint_{\mathbb R^2}v_{\eta}\phi|W|^{p-2}W_{\eta} d\xi d\eta \\&\quad-(p-1)\iint_{\mathbb R^2}A(\xi)v_{\xi\eta}|W|^{p-2}W_{\eta} d\xi d\eta 
	-(p-1)\iint_{\mathbb R^2}v_{\xi\eta}|W|^{p-2}W_{\xi} d\xi d\eta \\&\quad-(p-1)\iint_{\mathbb R^2}B(\xi)v_{\eta}|W|^{p-2}W_{\eta}d\xi d\eta \\
	&\triangleq W_1+W_2+W_3+W_4+W_5,
\end{aligned}
\end{equation}
By interpolation inequality \eqref{puin1} and estimate \eqref{epphi}, we have
\begin{equation}\label{Wj1}
	\begin{aligned}
		W_1&\leq C\iint_{\mathbb R^2}|\phi||W|^{p-1}|W_{\eta}|d\xi d\eta \\&\leq C\iint_{\mathbb R^2}|\phi||W|^{\frac{p}{2}}	|\nabla(|W|^{\frac{p}{2}})|d\xi d\eta \\
		&\leq C\norm{\phi}_{L^{k}(\mathbb R^2)}\norm{|W|^{\frac{p}{2}}}_{L^{\frac{2k}{k-2}}(\mathbb R^2)}\norm{ \nabla(|W|^{\frac{p}{2}})}_{L^2(\mathbb{R}^2)}\\
		&\leq C\norm{\phi}_{L^{k}(\mathbb R^2)}\norm{W}_{L^{p}(\mathbb R^2)}^{\frac{(k-2)p}{2k}}\norm{ \nabla(|W|^{\frac{p}{2}})}_{L^2(\mathbb{R}^2)}^{\frac{k+2}{k}}\\
		&\leq\epsilon \norm{ \nabla(|W|^{\frac{p}{2}})}_{L^2(\mathbb{R}^2)}^2+C_{\epsilon}\norm{\phi}_{L^k(\mathbb R^2)}^{\frac{2k}{k-2}}\norm{W}_{L^p(\mathbb R^2)}^p\\
		&\leq\epsilon \norm{ \nabla(|W|^{\frac{p}{2}})}_{L^2(\mathbb{R}^2)}^2+C_{\epsilon}\norm{\phi}_{L^k(\mathbb R^2)}^{\frac{2k}{k-2}}\norm{ \nabla(|W|^{\frac{p}{2}})}_{L^2(\mathbb{R}^2)}^{\frac{2p}{p+2}}\norm{\phi}_{L^p(\mathbb R^2)}^{\frac{2p}{p+2}}\\
		&\leq\epsilon_1 \norm{ \nabla(|W|^{\frac{p}{2}})}_{L^2(\mathbb{R}^2)}^2+C_{\epsilon_1}\norm{\phi}_{L^k(\mathbb R^2)}^{\frac{k(p+2)}{k-2}}\norm{\phi}_{L^p(\mathbb R^2)}^p\\
		&\leq \epsilon_1 \norm{ \nabla(|W|^{\frac{p}{2}})}_{L^2(\mathbb{R}^2)}^2+C_{\epsilon_1}(1+t)^{-\frac{(p+2)(k-8)}{8(k-2)}-\frac{p}{8}+1}\ln^{\frac{3k(p+2)}{k-2}+3p}(3+t)
\end{aligned}
\end{equation}
holds for any $k>6$, where we have used 
Gagliardo-Nirenberg inequality $$\norm{u}_{L^{\frac{2k}{k-2}}(\mathbb{R}^2)}\leq C\norm{u}_{L^{2}(\mathbb{R}^2)}^{1-\frac{2}{k}}\norm{\nabla u}_{L^2(\mathbb{R}^2)}^{\frac{2}{k}}.$$
 Then for an arbitrarily given small constant $\delta>0$, by choosing $k$ so large that $\frac{k-8}{k-2}>1-\frac{8p}{p+2}\delta$, we further have
\begin{equation}\label{Wj11}
	\begin{aligned}
	W_1 
	&\leq \epsilon \norm{ \nabla(|W|^{\frac{p}{2}})}_{L^2(\mathbb{R}^2)}^2+C_{\delta}(1+t)^{-\frac{(p+2)}{8}-\frac{p}{8}+\delta p+1}\ln^{7p}(3+t)\\
	&=\epsilon \norm{ \nabla(|W|^{\frac{p}{2}})}_{L^2(\mathbb{R}^2)}^2+C_{\delta}(1+t)^{-\frac{p}{4}+\delta p+\frac{3}{4}}\ln^{7p}(3+t).
\end{aligned}
\end{equation}
Next, it holds that
\begin{equation}\label{Wj2}
	\begin{aligned}
	W_2&\leq C\iint_{\mathbb R^2}v_{\eta}|\phi||W|^{\frac{p}{2}-1}|\nabla(|W|^{\frac{p}{2}})|d\xi d\eta  \\&\leq C\norm{v_{\eta}}_{L^{\infty}(\mathbb R^2)}\norm{\phi}_{L^{p}(\mathbb R^2)}\norm{W}_{L^{p}(\mathbb R^2)}^{\frac{p-2}{2}}\norm{ \nabla(|W|^{\frac{p}{2}})}_{L^2(\mathbb{R}^2)}\\
	&\leq C\norm{v_{\eta}}_{L^{\infty}(\mathbb R^2)}\norm{\phi}_{L^{p}(\mathbb R^2)}^{\frac{2p}{p+2}}\norm{ \nabla(|W|^{\frac{p}{2}})}_{L^2(\mathbb{R}^2)}^{\frac{2p}{p+2}}\\
	&\leq
	\epsilon \norm{ \nabla(|W|^{\frac{p}{2}})}_{L^2(\mathbb{R}^2)}^2+C_{\epsilon}\norm{v_{\eta}}_{L^{\infty}(\mathbb R^2)}^{\frac{p+2}{2}}\norm{\phi}_{L^{p}(\mathbb R^2)}^{p}\\
	&\leq \epsilon \norm{ \nabla(|W|^{\frac{p}{2}})}_{L^2(\mathbb{R}^2)}^2+C_{\epsilon}(1+t)^{-\frac{p}{4}+\frac{3}{4}}\ln^{3p}(3+t),
\end{aligned}
\end{equation}
and
\begin{equation}
	\begin{aligned}\label{Wj32}
	W_3 &\leq 
	\epsilon \iint_{\mathbb R^2}|W|^{p-2}W_{\eta}^2d\xi d\eta +C_{\epsilon}\iint_{\mathbb R^2}v_{\xi\eta}^2|W|^{p-2}d\xi d\eta \\
	&\leq \epsilon \norm{ \nabla(|W|^{\frac{p}{2}})}_{L^2(\mathbb{R}^2)}^2+C_{\epsilon}\norm{v_{\xi\eta}}_{L^{p}(\mathbb R^2)}^{2}\norm{W}_{L^{p}(\mathbb R^2)}^{p-2}\\
	&\leq \epsilon \norm{ \nabla(|W|^{\frac{p}{2}})}_{L^2(\mathbb{R}^2)}^2+
	C_{\epsilon}\norm{v_{\xi\eta}}_{L^{p}(\mathbb R^2)}^{2}\norm{ \nabla(|W|^{\frac{p}{2}})}_{L^2(\mathbb{R}^2)}^{\frac{2(p-2)}{p+2}}\norm{\phi}_{L^p(\mathbb R^2)}^{\frac{2(p-2)}{p+2}}\\
	&\leq \epsilon \norm{ \nabla(|W|^{\frac{p}{2}})}_{L^2(\mathbb{R}^2)}^2+
	C_{\epsilon}\norm{v_{\xi\eta}}_{L^{p}(\mathbb R^2)}^{\frac{p+2}{2}}\norm{\phi}_{L^p(\mathbb R^2)}^{\frac{p-2}{2}}\\
	&\leq \epsilon \norm{ \nabla(|W|^{\frac{p}{2}})}_{L^2(\mathbb{R}^2)}^2+C_{\epsilon}(1+t)^{-\frac{p}{4}+\frac{3}{4}}\ln^{3p}(3+t).
\end{aligned}
\end{equation}
For $W_4$, one has
\begin{align}\label{j4toj3}
	W_4\leq 
	\epsilon \iint_{\mathbb R^2}|W|^{p-2}W_{\xi}^2d\xi d\eta +C_{\epsilon}\iint_{\mathbb R^2}v_{\xi\eta}^2|W|^{p-2}d\xi d\eta ,
\end{align}
and we have obtained the estimate of second term on the right-hand side of \eqref{j4toj3} in \eqref{Wj32}.

For $W_5$, similar to the estimate \eqref{Wj32}, we have
\begin{align*}
	W_5 &\leq 
	\epsilon \iint_{\mathbb R^2}|W|^{p-2}W_{\eta}^2d\xi d\eta +C_{\epsilon}\iint_{\mathbb D}v_{\eta}^2|W|^{p-2}d\xi d\eta \\
	&\leq \epsilon \norm{ \nabla(|W|^{\frac{p}{2}})}_{L^2(\mathbb{R}^2)}^2+C_{\epsilon}\norm{v_{\eta}}_{L^{p}(\mathbb D)}^{2}\norm{W}_{L^{p}(\mathbb R^2)}^{p-2}\\
	&\leq \epsilon \norm{ \nabla(|W|^{\frac{p}{2}})}_{L^2(\mathbb{R}^2)}^2+C_{\epsilon}(1+t)^{-\frac{p}{4}+\frac{3}{4}}\ln^{3p}(3+t).
\end{align*}
Substituting the above estimates $W_1$-$W_5$ into \eqref{M1} and taking $\epsilon$ suitably small, we can obtain
\begin{align}\label{wet}
	\frac{d}{dt}\norm{W}_{L^p(R^2)}^p+C_p\iint_{\mathbb R^2}v_{\eta}|W|^pd\xi d\eta +C_{K_1,d}\norm{ \nabla(|W|^{\frac{p}{2}})}_{L^2(\mathbb{R}^2)}^2
	\leq C_{\delta}(1+t)^{-\frac{p}{4}+\delta p+\frac{3}{4}}\ln^{7p}(3+t).
\end{align}
Multiplying \eqref{wet} by $(1+t)^\alpha$ with some sufficiently large positive constant $\alpha$ and 
integrating the resulting equation from $0$ to $t$, we have
\begin{equation}\label{tphibe}
	\begin{aligned}
	&\quad (1+t)^\alpha\norm{W}_{L^p}^p+C_p\int_{0}^{t}\int_{\mathbb R^2}(1+\tau)^\alpha v_{\eta}|W|^pd\xi d\eta d\tau\\
	&\quad+C_d\int_{0}^{t}(1+\tau)^\alpha\norm{\nabla (|W|^{\frac{p}{2}})}_{L^2}^2(\tau)d\tau\\
	&\leq \norm{W_0}_{L^p}^p+ C\int_{0}^{t}(1+\tau)^{\alpha-1}\norm{W}_{L^{p}}^{p}(\tau)d\tau+C_{\delta}(1+t)^{\alpha-\frac{p}{4}+\delta p+\frac{7}{4}}\ln^{7p}(3+t)\\
	&\triangleq \norm{W_0}_{L^p}^p+J +C_{\delta}(1+t)^{\alpha-\frac{p}{4}+\delta p+\frac{7}{4}}\ln^{7p}(3+t).
\end{aligned}
\end{equation}
Using inequality \eqref{puin1}
and the estimate \eqref{epphi}, one has
\begin{equation}\label{phibtj}
	\begin{aligned}
	J&\leq C\int_{0}^{t}(1+\tau)^{\alpha-1}\norm{\phi}_{L^{p}}^{\frac{2p}{p+2}}\norm{\nabla (|W|^{\frac{p}{2}})}_{L^2}^{\frac{2p}{p+2}}(\tau)d\tau\\	
&\leq\epsilon\int_{0}^{t}(1+\tau)^{\alpha}\norm{\nabla (|W|^{\frac{p}{2}})}_{L^2}^2(\tau)d\tau+C_{\epsilon}\int_{0}^{t}(1+\tau)^{\alpha-\frac{p+2}{2}}\norm{\phi}_{L^p(\mathbb{R}^2)}^p(\tau)d\tau\\
	&\leq\epsilon\int_{0}^{t}(1+\tau)^{\alpha}\norm{\nabla (|W|^{\frac{p}{2}})}_{L^2}^2(\tau)d\tau+C_{\epsilon}(1+t)^{\alpha-\frac{5p}{8}+1}\ln^{3p}(3+t).
\end{aligned}
\end{equation}
Therefore, by \eqref{tphibe} and \eqref{phibtj}, we can obtain
\begin{align}\label{etpb}
	\norm{W}_{L^p(R^2)}^p
	\leq C_{\delta}(1+t)^{-\frac{p}{4}+\delta p+\frac{7}{4}}\ln^{7p}(3+t). 
\end{align}
Thus we have shown the estimate \eqref{phibp}.

\noindent$\mathbf{Step~4.~}$Finally, we prove the $L^p$-estimate $\eqref{phiap}$ for $\phi_{\xi}$. Differentiating \eqref{eqphi} with respect to $\xi$ and putting $Q:=\phi_{\xi}$, we have
\begin{equation}\label{ntphia}
	\begin{aligned}
	&Q_t+(\phi Q)_{\eta} +(v_{\xi}\phi)_{\eta}+v_{\eta}Q+vQ_{\eta}\\=&K(\xi)Q_{\eta\eta }+A(\xi)Q_{\xi\eta}+2Q_{\xi\xi}+B(\xi)Q_{\eta}
	+K'(\xi)\phi_{\eta\eta }+A'(\xi)Q_{\eta}+B'(\xi)\phi_{\eta}\\&+A(\xi)v_{\xi\xi\eta}+2v_{\xi\xi\xi}+B(\xi)v_{\xi\eta}+A'(\xi)v_{\xi\eta}+B'(\xi)v_{\eta}.
\end{aligned}
\end{equation}
Multiplying \eqref{ntphia} by $|Q|^{p-2}Q$ and
integrating it over $\mathbb R^2$, we obtain
\begin{equation}\label{Q1to7}
\begin{aligned}
	&\frac{1}{p}\frac{d}{dt}\norm{Q}_{L^p(\mathbb{R}^2)}^p+\frac{p-1}{p}\iint_{\mathbb R^2}v_{\eta}|Q|^pd\xi d\eta \\&+(p-1)\iint_{\mathbb R^2}|Q|^{p-2}(K(\xi)Q_{\eta}^2+A(\xi)Q_{\xi}Q_{\eta}+2Q_{\xi}^2)d\xi d\eta \\
	=&(p-1)\iint_{\mathbb R^2}\phi Q|Q|^{p-2}Q_{\eta}Wd\xi d\eta +(p-1)\iint_{\mathbb R^2}v_{\xi}\phi|Q|^{p-2}Q_{\eta} d\xi d\eta \\&-(p-1)\iint_{\mathbb R^2}K'(\xi){\phi}_{\eta}|Q|^{p-2}Q_{\eta} d\xi d\eta 
	-(p-1)\iint_{\mathbb R^2}B'(\xi){\phi}|Q|^{p-2}Q_{\eta} d\xi d\eta \\&-(p-1)\iint_{\mathbb R^2}v_{\xi\xi}|Q|^{p-2}(A(\xi)Q_{\eta}+2Q_{\xi})d\xi d\eta -(p-1)\iint_{\mathbb R^2}(B(\xi)+A'(\xi))v_{\xi}|Q|^{p-2}Q_{\eta}d\xi d\eta \\&+\iint_{\mathbb R^2}B'(\xi)v_{\eta}|Q|^{p-2}Qd\xi d\eta \\
	\triangleq &Q_1+Q_2+Q_3+Q_4+Q_5+Q_6+Q_7.
\end{aligned}
\end{equation}
For $Q_1$, similar to the estimate of $W_1$ in \eqref{Wj1}, we can get
\begin{align}\label{QQ1}
	Q_1\leq C\iint_{\mathbb R^2}|\phi||Q|^{p-1}|Q_{\eta}|d\xi d\eta &\leq  \epsilon \norm{ \nabla(|Q|^{\frac{p}{2}})}_{L^2(\mathbb{R}^2)}^2+C_{\epsilon,\delta}(1+t)^{-\frac{p}{4}+\delta p+\frac{3}{4}}\ln^{7p}(3+t).
\end{align}
For $Q_2$, the similar computations as in \eqref{Wj2} gives
\begin{equation}
	\begin{aligned}
	Q_2&\leq 
	\epsilon \norm{ \nabla(|Q|^{\frac{p}{2}})}_{L^2(\mathbb{R}^2)}^2+C_{\epsilon}\norm{v_{\xi}}_{L^{\infty}(\mathbb R^2)}^{\frac{p+2}{2}}\norm{\phi}_{L^{p}(\mathbb R^2)}^{p}\\
	&\leq \epsilon \norm{ \nabla(|Q|^{\frac{p}{2}})}_{L^2(\mathbb{R}^2)}^2+C_{\epsilon}(1+t)^{-\frac{3p}{8}+\frac{1}{2}}\ln^{4p}(3+t).
\end{aligned}
\end{equation}
Applying the interpolation inequality \eqref{puin1} for $Q=\phi_{\xi}$, we have

\begin{equation}
	\begin{aligned}
	Q_3&\leq \epsilon \norm{ \nabla(|Q|^{\frac{p}{2}})}_{L^2(\mathbb{R}^2)}^2+C_{\epsilon}\iint_{\mathbb R^2}{\phi}_{\eta}^2|Q|^{p-2}d\xi d\eta \\
	&\leq\epsilon \norm{ \nabla(|Q|^{\frac{p}{2}})}_{L^2(\mathbb{R}^2)}^2+C_{\epsilon}\norm{\phi_{\eta}}_{L^{p}(\mathbb R^2)}^2\norm{Q}_{L^{p}(\mathbb R^2)}^{p-2}\\
	&\leq\epsilon \norm{ \nabla(|Q|^{\frac{p}{2}})}_{L^2(\mathbb{R}^2)}^2+C_{\epsilon}\norm{\phi_{\eta}}_{L^{p}(\mathbb R^2)}^2\norm{\phi}_{L^{p}(\mathbb R^2)}^{\frac{2(p-2)}{p+2}}\norm{ \nabla(|Q|^{\frac{p}{2}})}_{L^2(\mathbb{R}^2)}^{\frac{2(p-2)}{p+2}}\\
	&\leq\epsilon_1 \norm{ \nabla(|Q|^{\frac{p}{2}})}_{L^2(\mathbb{R}^2)}^2+C_{\epsilon_1}\norm{\phi_{\eta}}_{L^{p}(\mathbb R^2)}^{\frac{p+2}{2}}\norm{\phi}_{L^{p}(\mathbb R^2)}^{\frac{p-2}{2}}\\
	&\leq \epsilon_1 \norm{ \nabla(|Q|^{\frac{p}{2}})}_{L^2(\mathbb{R}^2)}^2+C_{\delta}(1+t)^{-\frac{3p}{16}+\delta p+\frac{3}{2}}\ln^{6p}(3+t).
\end{aligned}
\end{equation}
The estimates of $Q_4$-$Q_6$ are similar to $Q_3$, we have
\begin{equation}
	\begin{aligned}
	Q_4&\leq 
	\epsilon_1 \norm{ \nabla(|Q|^{\frac{p}{2}})}_{L^2(\mathbb{R}^2)}^2+C_{\epsilon_1}\norm{\phi}_{L^{p}(\mathbb R^2)}^{\frac{p+2}{2}}\norm{\phi}_{L^{p}(\mathbb R^2)}^{\frac{p-2}{2}}\\
	&\leq \epsilon_1 \norm{ \nabla(|Q|^{\frac{p}{2}})}_{L^2(\mathbb{R}^2)}^2+C_{\epsilon_1}(1+t)^{-{\frac{p}{8}}+1}\ln^{3p}(3+t),
\end{aligned}
\end{equation}
\begin{equation}
	\begin{aligned}
	Q_5&\leq 
	\epsilon_1 \norm{ \nabla(|Q|^{\frac{p}{2}})}_{L^2(\mathbb{R}^2)}^2+C_{\epsilon_1}\norm{v_{\xi\xi}}_{L^{p}(\mathbb R^2)}^{\frac{p+2}{2}}\norm{\phi}_{L^{p}(\mathbb R^2)}^{\frac{p-2}{2}}\\
	&\leq \epsilon_1 \norm{ \nabla(|Q|^{\frac{p}{2}})}_{L^2(\mathbb{R}^2)}^2+C_{\epsilon_1}(1+t)^{-{\frac{p}{8}}+1}\ln^{3p}(3+t),
\end{aligned}
\end{equation}
\begin{equation}
	\begin{aligned}
	Q_6&\leq 
	\epsilon_1 \norm{ \nabla(|Q|^{\frac{p}{2}})}_{L^2(\mathbb{R}^2)}^2+C_{\epsilon_1}\norm{v_{\xi}}_{L^{p}(\mathbb R^2)}^{\frac{p+2}{2}}\norm{\phi}_{L^{p}(\mathbb R^2)}^{\frac{p-2}{2}}\\
	&\leq \epsilon_1 \norm{ \nabla(|Q|^{\frac{p}{2}})}_{L^2(\mathbb{R}^2)}^2+C_{\epsilon_1}(1+t)^{-{\frac{p}{8}}+1}\ln^{3p}(3+t).
\end{aligned}
\end{equation}
For $Q_7$, since $B'(\xi)=0$ for $\xi\notin[G_1,G_2]$, then it has 
\begin{equation}\label{QQ7}
	\begin{aligned}
	Q_7=\iint_{\mathbb D}B'(\xi)v_{\eta}|Q|^{p-2}Qd\xi d\eta &\leq C\norm{v_{\eta}}_{L^{p}(\mathbb D)}\norm{Q}_{L^{p}(\mathbb R^2)}^{p}\\
	&\leq C\norm{v_{\eta}}_{L^{p}(\mathbb D)}\norm{\phi}_{L^{p}(\mathbb R^2)}^{\frac{2p}{p+2}}\norm{ \nabla(|Q|^{\frac{p}{2}})}_{L^2(\mathbb{R}^2)}^{\frac{2p}{p+2}}\\
	&\leq \epsilon_1 \norm{ \nabla(|Q|^{\frac{p}{2}})}_{L^2(\mathbb{R}^2)}^2+C_{\epsilon_1}\norm{v_{\eta}}_{L^{p}(\mathbb D)}^{\frac{p+2}{2}}\norm{\phi}_{L^{p}(\mathbb R^2)}^{p}\\
	&\leq \epsilon_1 \norm{ \nabla(|Q|^{\frac{p}{2}})}_{L^2(\mathbb{R}^2)}^2+C_{\epsilon_1}(1+t)^{-{\frac{p}{8}}+1}\ln^{3p}(3+t).
\end{aligned}
\end{equation}
Collecting the estimates \eqref{QQ1}-\eqref{QQ7} and taking $\epsilon_1$ and $\delta$ suitably small, we finally get
\begin{align}\label{Qnte}
	&\frac{d}{dt}\norm{Q}_{L^p(R^2)}^p+C_p\iint_{\mathbb R^2}v_{\eta}|Q|^pd\xi d\eta +C_{K_1,d}\norm{ \nabla(|Q|^{\frac{p}{2}})}_{L^2(\mathbb{R}^2)}^2
	\leq C(1+t)^{-\frac{p}{8}+1}\ln^{3p}(3+t).
\end{align}
Multiplying \eqref{Qnte} by $(1+t)^\alpha$ with some sufficiently large positive constant $\alpha$ and 
integrating the resulting equation from $0$ to $t$, we have
\begin{equation*}
\begin{aligned}
		&(1+t)^\alpha\norm{Q}_{L^p}^p+C_p\int_{0}^{t}\int_{\mathbb R^2}(1+\tau)^\alpha v_{\eta}|Q|^pd\xi d\eta d\tau\\
	&\quad+C_d\int_{0}^{t}(1+\tau)^\alpha\norm{\nabla (|Q|^{\frac{p}{2}})}_{L^2}^2(\tau)d\tau\\
	&\leq \norm{Q_0}_{L^p}^p+ C\int_{0}^{t}(1+\tau)^{\alpha-1}\norm{Q}_{L^{p}}^{p}(\tau)d\tau+C(1+t)^{\alpha-\frac{p}{8}+2}\ln^{3p}(3+t)
	\\
	&  \triangleq \norm{Q_0}_{L^p}^p+\widetilde{J} +C(1+t)^{\alpha-\frac{p}{8}+2}\ln^{3p}(3+t).
\end{aligned}
\end{equation*}
Similar to the estimate in \eqref{phibtj}, we have
\begin{align}
	\widetilde{J}
	\leq\epsilon\int_{0}^{t}(1+\tau)^{\alpha}\norm{\nabla (|Q|^{\frac{p}{2}})}_{L^2(\mathbb{R}^2)}^2(\tau)d\tau+C_{\epsilon}(1+t)^{\alpha-\frac{5p}{8}+1}\ln^{3p}(3+t).
\end{align}
Therefore, we can obtain
\begin{align}\label{etpa}
	\norm{Q}_{L^p(R^2)}^p
	\leq C(1+t)^{-\frac{p}{8}+2}\ln^{3p}(3+t),
\end{align}
we arrive at the desired estimate \eqref{phiap}. The proof of \cref{decayphi}
is completed.
\end{proof}

\subsection{The proof of \cref{mt}}
We first prove \cref{mt} with the initial discontinuity $y=\varphi(x)$ replaced by $y=L_{\epsilon_0}(x)$ (See Figure 2). 
From Gagliardo-Nirenberg inequality and \cref{decayphi}, we have for any $p>6$ 
\begin{align}\norm{\phi}_{L^{\infty}(\mathbb R^2)}\leq C\norm{\phi}_{L^p(\mathbb R^2)}^{1-\frac{2}{p}}\norm{\nabla \phi}_{L^p(\mathbb R^2)}^{\frac{2}{p}}\leq C (1+t)^{-\frac{1}{8}+\frac{1}{p}+\frac{2}{p^2}}\ln^3(3+t),\end{align}
where $C$ is a positive constant depending on $p$.
 For an arbitrarily given small constant $\epsilon>0$, we can choose $p$ so large that $\frac{1}{p}+\frac{2}{p^2}\leq \epsilon$, then we have 
\begin{align}\label{phiinfty}
	\norm{\phi}_{L^{\infty}(\mathbb R^2)}\leq C_{\epsilon
	} (1+t)^{-\frac{1}{8}+\epsilon}\ln^3(3+t).
\end{align}
Combining the results \eqref{T1}, \eqref{T2} and \eqref{phiinfty}, we can obtain	
\begin{equation*}
	\begin{aligned}
	&\norm{u(t,x,y)-u^R(t,x,y)}_{L^{\infty}(\mathbb R^2)}\\\leq& \norm{u(t,\xi,\eta)-v(t,\xi,\eta)}_{L^{\infty}(\mathbb R^2)}+\norm{v(t,\xi,\eta)-w(t,\xi,\eta)}_{L^{\infty}(\mathbb R^2)}+\norm{w(t,\xi,\eta)-u^R(\frac{\eta}{t})}_{L^{\infty}(\mathbb R^2)}\\\leq&
	C_{\epsilon} (1+t)^{-\frac{1}{8}+\epsilon}\ln^3(3+t)+C(1+t)^{-\frac{1}{2}}\\
	\leq&
	C_{\epsilon} (1+t)^{-\frac{1}{8}+\epsilon}\ln^3(3+t),
\end{aligned}
\end{equation*}
where $u(t,x,y)$ is the solution to \eqref{eq}-\eqref{idata}, $u^R(t,x,y)$ is 2D non-self-similar rarefaction wave \eqref{Rs},  $w(t,\xi,\eta)$
 is the smooth approximation of self-similar planar rarefaction wave $u^R(\frac{\eta}{t})$, and $v(t,\xi,\eta)$ is the viscous profile as the solution to \eqref{v1}. Therefore, \cref{mt} is proved for the initial discontinuity $y=L_{\epsilon_0}(x)$.

Now, we prove that \cref{mt} holds for the initial discontinuity $y=\varphi(x)$ satisfying \cref{2DH} and \cref{cl}.

We denote $u^R_L(t,x,y)$ be the rarefaction wave solution to the Riemann problem \eqref{eq0}-\eqref{0data} with initial discontinuity $y-L_{\epsilon_0}(x)=0$, 
and $Z_L(x,y)$ is the implicit function determined by 
\begin{equation*}
	y-Z_L-L_{\epsilon_0}(x-Z_L)=0.
\end{equation*}

From the discussion in \eqref{vh0}-\eqref{uut}, we have 
\begin{equation}\label{1}\norm{w_0(Z(x,y))-w_0(Z_L(x,y))}_{H^1(\mathbb R^2)}<+\infty,\end{equation}
and \begin{equation}\label{2}\norm{u^R(t,x,y)-u^R_L(t,x,y)}_{L^{\infty}(\mathbb R^2)}\leq\frac{C}{t}.\end{equation} 
Since $u_0(x,y)-w_0(Z(x,y))\in H^1(\mathbb R^2)$, then
\begin{equation}\label{3}\norm{u_0(x,y)-w_0(Z_L(x,y))}_{H^1(\mathbb R^2)}<+\infty.\end{equation}
We have already proved that \cref{mt} holds for initial discontinuity $y=L_{\epsilon_0}(x)$, thus by \eqref{3}, it holds that
\begin{equation}\label{4}
		\norm{u(t,x,y)-u^R_L(t,x,y)}_{L^{\infty}(\mathbb R^2)}
		\leq
		C_{\epsilon} (1+t)^{-\frac{1}{8}+\epsilon}\ln^3(3+t).
\end{equation}
Combining \eqref{2} and \eqref{4}, we can obtain
\begin{equation*}
	\begin{aligned}
		&\norm{u(t,x,y)-u^R(t,x,y)}_{L^{\infty}(\mathbb R^2)}\\\leq& \norm{u(t,x,y)-u^R_L(t,x,y)}_{L^{\infty}(\mathbb R^2)}+\norm{u^R_L(t,x,y)-u^R(t,x,y)}_{L^{\infty}(\mathbb R^2)}\\\leq&
		C_{\epsilon} (1+t)^{-\frac{1}{8}+\epsilon}\ln^3(3+t)+\frac{C}{t}\\
		\leq&
		C_{\epsilon} (1+t)^{-\frac{1}{8}+\epsilon}\ln^3(3+t).
	\end{aligned}
\end{equation*}
Therefore, we complete the proof of \cref{mt}.


\section{Appendix}
In this appendix, we will prove  \cref{prop12} and \cref{Z12} in subsection 2.1.
Recalling that  $y=\varphi_1(x)$ and $y=\varphi_2(x)$ are two different initial discontinuities to Riemann problem \eqref{eq0}-\eqref{0data},  and $Z_i(x,y)~(i=1,2)$ are the implicit functions determined by 
\begin{equation}\label{zi}
	y-Z_i-\varphi_i (x-Z_i)=0,
\end{equation}
and $u^R_i(t,x,y)~(i=1,2)$ are 2D non-self-similar rarefaction wave defined in \eqref{Ris} to the Riemann problem \eqref{eq0}-\eqref{0data} corresponding to the initial discontinuity $y=\varphi_i(x)$.

Before proving \cref{prop12}, we first give several lemmas. The following lemma is a comparison Lemma.
\begin{Lem}\label{RRleq}
	Suppose both  $\varphi_1(x)$ and $\varphi_2(x)$ satisfy \cref{2DH}, and $\forall x\in \mathbb{R},  \varphi_2(x)\leq\varphi_1(x)$,  then
	\begin{equation}\label{uuleq}
		u^R_1(t,x,y)\leq u^R_2(t,x,y),~\forall t>0.
	\end{equation}
\end{Lem}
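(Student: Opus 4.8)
The plan is to reduce the comparison of the two rarefaction waves to a comparison of the associated implicit functions $Z_1$ and $Z_2$, and then to compare $Z_1$ and $Z_2$ by a monotonicity argument on their defining relation \eqref{zi}. First I would note, using the representation of $u_i^R$ in terms of $Z_i$ (the analogue of \eqref{Rs1} with $\varphi$ and $Z$ replaced by $\varphi_i$ and $Z_i$), that for each fixed $t>0$ one may write $u_i^R(t,x,y)=g_t\big(Z_i(x,y)\big)$, where $g_t(s):=\min\{\max\{s/t,\,u_-\},\,u_+\}$ is a non-decreasing function of $s\in\mathbb R$ that does not depend on $i$. Hence it suffices to prove that $Z_1(x,y)\le Z_2(x,y)$ for all $(x,y)\in\mathbb R^2$.

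To establish this, fix $x\in\mathbb R$ and set $F_i(z):=z+\varphi_i(x-z)$, so that \eqref{zi} reads $F_i(Z_i(x,y))=y$. By \eqref{2DH} we have $F_i'(z)=1-\varphi_i'(x-z)>d_0>0$, so each $F_i$ is strictly increasing; integrating the lower bound $F_i'>d_0$ shows $F_i(z)\to\pm\infty$ as $z\to\pm\infty$, hence $F_i:\mathbb R\to\mathbb R$ is a strictly increasing bijection and $Z_i(x,y)=F_i^{-1}(y)$. Since $\varphi_2\le\varphi_1$ pointwise, $F_2(z)\le F_1(z)$ for every $z$; applying the increasing map $F_2^{-1}$ to the chain $F_2\big(Z_1(x,y)\big)\le F_1\big(Z_1(x,y)\big)=y$ yields $Z_1(x,y)=F_1^{-1}(y)\le F_2^{-1}(y)=Z_2(x,y)$. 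Combining this with the monotonicity of $g_t$ gives $u_1^R(t,x,y)=g_t(Z_1(x,y))\le g_t(Z_2(x,y))=u_2^R(t,x,y)$, which is \eqref{uuleq}.

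I do not expect a serious obstacle in this lemma; the two points that require a little care are (i) verifying that $F_i$ is genuinely onto all of $\mathbb R$ — which is where the uniform lower bound $1-\varphi_i'>d_0$ (rather than merely $>0$) is used, ensuring the inverse functions $F_i^{-1}$ are defined on the whole line — and (ii) keeping the direction of the inequality straight when passing from $F_2\le F_1$ to $F_1^{-1}\le F_2^{-1}$. As an alternative one could differentiate \eqref{zi} along a parameter $\lambda\in[0,1]$ interpolating between $\varphi_2$ and $\varphi_1$ and use \cref{LZ}(i) to control the sign of $\partial_\lambda Z$, but the inverse-function argument above is shorter and self-contained.
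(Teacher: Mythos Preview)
Your proof is correct and takes a genuinely different, cleaner route than the paper. The paper works directly from the region-based definition \eqref{Ris}: it splits $\mathbb R_+\times\mathbb R^2$ into the nine intersections $\Omega_2^{\pm,c}\cap\Omega_1^{\pm,c}$ and, case by case, either shows the intersection is empty or verifies the scalar inequality by hand (the core computation, in Case~2.2, is exactly your estimate $Z_1\le Z_2$, but carried out only on $\Omega_1^c\cap\Omega_2^c$). You instead exploit the equivalent representation \eqref{Rs1} to factor $u_i^R(t,\cdot)=g_t\circ Z_i$ through the single non-decreasing function $g_t(s)=\min\{\max\{s/t,u_-\},u_+\}$, reducing everything to the global comparison $Z_1\le Z_2$, which you then obtain in one stroke from the monotonicity of $F_i(z)=z+\varphi_i(x-z)$ and the order-reversing property of inverses. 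Your argument is shorter and more conceptual, and it makes transparent why the lemma is really just monotone dependence of an implicit function on a parameter; the paper's case analysis, while longer, has the minor advantage of never needing to invoke the equivalence between \eqref{Ris} and \eqref{Rs1}.
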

\begin{proof}
	We define the following six regions, for $i=1,2,$ 
\begin{equation}\label{region}
	\begin{array}{ll}
	\di \Omega_i^-:=\{(t,x,y)\in\mathbb R_+\times\mathbb R^2|~y<u_-t+\varphi_i(x-u_-t)\};\\[2mm]
	\Omega_i^c:=\{(t,x,y)\in\mathbb R_+\times\mathbb R^2|~u_-t+\varphi_i (x-u_-t)\leq y\leq u_+t+\varphi_i (x-u_+t)\};\\[2mm]
	\Omega_i^+:=\{(t,x,y)\in\mathbb R_+\times\mathbb R^2|~y>u_+t+\varphi_i (x-u_+t)\}.
	\end{array}
\end{equation}
	
	Then $\mathbb R_+\times\mathbb R^2=\Omega_i^-\cup\Omega_i^c\cup\Omega_i^+,~i=1,2$, and
	\begin{equation}\label{uri}
		u^R_i(t,x,y)=\begin{cases}
			u_-, &~ (t,x,y)\in \Omega_i^-,\\[2mm]
			\frac{Z_i(x,y)}{t},&~(t,x,y)\in \Omega_i^c, \\[1mm]
			u_+,&~(t,x,y)\in \Omega_i^+.
		\end{cases}
	\end{equation}
      Now we prove \cref{RRleq} by the following three cases:\\

	\underline{Case 1}. If $(t,x,y)\in \Omega_2^+$, then
	we have $u^R_1(t,x,y)\leq u_+=u^R_2(t,x,y)$.
	
	\
	
	\underline{Case 2}. If $(t,x,y)\in \Omega_2^c$:
	
	\quad \underline{Case 2.1}.  If $(t,x,y)\in \Omega_2^c\cap \Omega_1^+$, then
	\begin{align}	y>u_+t+\varphi_1(x-u_+t).
	\end{align}
	By $\varphi_2(x)\leq\varphi_1(x)$, we have
	\begin{align*}
		y -Z_2(x,y)-\varphi_2(x-Z_2(x,y))&>u_+t- Z_2(x,y)+\varphi _1(x-u_+t)-\varphi_2(x-Z_2(x,y))\\
		&\geq u_+t- Z_2(x,y)+\varphi _2(x-u_+t)-\varphi_2(x-Z_2(x,y))\\
		&=u_+t-Z_2(x,y)-\varphi'_2 (\theta)(u_+t-Z_2(x,y))\\
		&=(u_+t-Z_2(x,y))(1-\varphi'_2 (\theta))\geq 0.
	\end{align*}
	Then $(t,x,y)\notin \Omega_2^c$, which contradicts the assumption on Case 2.1. Therefore, $\Omega_2^c\cap \Omega_1^+=\emptyset$. 
	
	\quad \underline{Case 2.2}. If $(t,x,y)\in \Omega_2^c\cap \Omega_1^c$, then
	\begin{align*}
		y=Z_i(x,y)+\varphi_i(x-Z_i(x,y)),\quad i=1,2.
	\end{align*}
By $\varphi_2(x)\leq\varphi_1(x)$, we have
	\begin{align*}
		0&=\big(Z_1(x,y)+\varphi_1(x-Z_1(x,y))\big)-\big(Z_2(x,y)+\varphi_2(x- Z_2(x,y))\big)\\
		&=Z_1(x,y)-Z_2(x,y)+\varphi_1(x-Z_1(x,y))-\varphi_2(x- Z_2(x,y))\\
		&\geq Z_1(x,y)-Z_2(x,y)+\varphi_2(x-Z_1(x,y))-\varphi_2(x- Z_2(x,y))\\
		&=Z_1(x,y)-Z_2(x,y)-\varphi'_2 (\theta)(Z_1(x,y)-Z_2(x,y))\\
		&=(Z_1(x,y)-Z_2(x,y))(1-\varphi'_2 (\theta)).
	\end{align*}
	Thus, if $(t,x,y)\in \Omega_2^c\cap \Omega_1^c$, then $Z_1(x,y)\leq Z_2(x,y)$. Therefore, we have $\di u^R_1(t,x,y)=\frac{Z_1(x,y)}{t}\leq \frac{Z_2(x,y)}{t}=u^R_2(t,x,y).$

\quad \underline{Case 2.3}. If $(t,x,y)\in \Omega_2^c\cap \Omega_1^-$, then $u^R_1(t,x,y)=u_-\leq u^R_2(t,x,y)$.\\

\
	
	\underline{Case 3}. For the case $(t,x,y)\in \Omega_2^-$:
	
	\quad \underline{Case 3.1}. If $(t,x,y)\in \Omega_2^-\cap \Omega_1^-$, then $u^R_1(t,x,y)=u^R_2(t,x,y)=u_-$.
	
	\quad \underline{Case 3.2}.  If $(t,x,y)\in \Omega_2^-\cap \Omega_1^c$, then
	\begin{align}
		y=Z_1(x,y)+\varphi_1(x-Z_1(x,y)).
	\end{align}
By $\varphi_2(x)\leq\varphi_1(x)$, we have
	\begin{align*}
		y-u_-t-\varphi_2(x-u_-t)&=Z_1(x,y)-u_-t+\varphi_1 (x-Z_1(x,y))-\varphi_2(x-u_-t)\\
		&\geq Z_1(x,y)-u_-t+\varphi_2(x-Z_1(x,y))-\varphi_2(x-u_-t)\\
		&=Z_1(x,y)-u_-t-\varphi'_2 (\theta)(Z_1(x,y)-u_-t)\\
		&=(Z_1(x,y)-u_-t)(1-\varphi'_2 (\theta))\geq 0.
	\end{align*}
	Then $(t,x,y)\notin \Omega_2^-$, this contradicts the assumption in Case 3.2. Therefore, $\Omega_2^-\cap \Omega_1^c=\emptyset$.
	
	\quad \underline{Case 3.3}.  If $(t,x,y)\in \Omega_2^-\cap \Omega_1^+$, then we can similarly get $\Omega_2^-\cap \Omega_1^+=\emptyset$.
	
	Collecting all the above considerations, \cref{RRleq} is proved.
\end{proof}

\begin{Lem}\label{RRM}
	Suppose that $\varphi_2(x)=\varphi_1(x)+M$ with $\varphi_1(x)$ satisfying \cref{2DH} and  $M$ being a constant, then for $\forall t>0$,
	\begin{align}\label{uur}
		\norm{u^R_1(t,x,y)-u^R_2(t,x,y)}_{L^{\infty}(\mathbb R^2)}\leq \frac{C}{t},
	\end{align}
	where $C$ is a positive constant depending on $M$ and $d_0$, but independent of $t>0$.
\end{Lem}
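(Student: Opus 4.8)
The plan is to reduce \eqref{uur} to a uniform Lipschitz bound of $u^R_1$ in the $y$-variable, after observing that a constant vertical shift of the initial discontinuity produces exactly the same vertical shift of the rarefaction wave.

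First I would record the translation identity. Since $\varphi_2(x)=\varphi_1(x)+M$, the defining equation \eqref{zi} for $Z_2$ reads $(y-M)-Z_2-\varphi_1(x-Z_2)=0$, which is exactly the equation determining $Z_1$ at the point $(x,y-M)$; by the unique solvability guaranteed by \cref{2DH} (note $\varphi_2'=\varphi_1'$, so $\varphi_2$ satisfies \cref{2DH} as well) this yields $Z_2(x,y)=Z_1(x,y-M)$. Substituting $\varphi_2=\varphi_1+M$ into the three inequalities delimiting $\Omega_2^-,\Omega_2^c,\Omega_2^+$ in \eqref{region} shows that $(t,x,y)\in\Omega_2^{\ast}$ if and only if $(t,x,y-M)\in\Omega_1^{\ast}$ for $\ast\in\{-,c,+\}$. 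Combining these facts with \eqref{Ris} (equivalently \eqref{uri}) gives the key identity
\begin{equation*}
	u^R_2(t,x,y)=u^R_1(t,x,y-M),\qquad\forall\,(t,x,y)\in\mathbb R_+\times\mathbb R^2.
\end{equation*}

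Next I would establish that, for each fixed $t>0$, the map $y\mapsto u^R_1(t,x,y)$ is globally Lipschitz with constant at most $1/(t\,d_0)$, uniformly in $x\in\mathbb R$. It is continuous (the interface values match, since $Z_1=u_\mp t$ on the two boundary curves of the intermediate region, whence $Z_1/t=u_\mp$ there) and piecewise $C^1$: it equals the constants $u_\mp$ outside the intermediate region, while on the intermediate region the same computation as in \cref{LZ}(i) gives
\begin{equation*}
	\partial_y u^R_1(t,x,y)=\frac{1}{t}\,Z_{1,y}(x,y)=\frac{1}{t\big(1-\varphi_1'(x-Z_1(x,y))\big)}\in\Big(0,\frac{1}{t\,d_0}\Big),
\end{equation*}
using \cref{2DH} for $\varphi_1$. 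Hence, by the fundamental theorem of calculus applied on each of the three regions together with continuity across the interfaces, $|u^R_1(t,x,y)-u^R_1(t,x,y')|\le|y-y'|/(t\,d_0)$ for all $x,y,y'$. Taking $y'=y-M$ and invoking the identity from the previous step gives $\norm{u^R_1(t,\cdot,\cdot)-u^R_2(t,\cdot,\cdot)}_{L^\infty(\mathbb R^2)}\le|M|/(t\,d_0)$, which is \eqref{uur} with $C=|M|/d_0$.

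The proof is short and there is no genuine obstacle; the only place requiring a little care is the second step, where the continuity of $u^R_1$ across the two interface curves must be used to patch the pointwise derivative bounds on the three regions into one global Lipschitz estimate in $y$. As an alternative, one could derive \eqref{uur} directly from the comparison \cref{RRleq} by squeezing $u^R_1$ and $u^R_2$ between vertical translates of one another, but the explicit translation identity above is cleaner and self-contained.
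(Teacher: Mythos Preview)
Your proof is correct and yields the same constant $C=|M|/d_0$ as the paper, but the route is genuinely different. The paper proceeds by a direct case-by-case analysis: it partitions $\mathbb R_+\times\mathbb R^2$ according to the nine possible combinations of regions $\Omega_1^{\ast}\cap\Omega_2^{\ast'}$ with $\ast,\ast'\in\{-,c,+\}$, shows some of the intersections are empty, and in each remaining case subtracts the two implicit equations and uses \eqref{2DH} to read off the bound $|u^R_1-u^R_2|\le M/(d_0 t)$ pointwise. Your argument instead factors the problem into two clean steps: the translation identity $u^R_2(t,x,y)=u^R_1(t,x,y-M)$, followed by a uniform Lipschitz bound $\partial_y u^R_1\in[0,1/(t d_0)]$ in the $y$-variable. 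This avoids the case distinction entirely and makes the geometric reason for the $1/t$ rate transparent (the rarefaction fan spreads linearly in $t$ along the $y$-direction). The paper's approach has the minor advantage of not invoking the $C^1$ regularity of $u^R_1$ in $y$ in the intermediate region and continuity across the interfaces, working instead only with the defining inequalities; but since that regularity follows immediately from \cref{LZ}(i) and the matching of values at the boundaries, your argument is both shorter and more informative.
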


\begin{proof}
	Without loss of generality, we consider the case $M>0$. Recalling the definition of the six regions $\Omega_i^-, \Omega_i^c, \Omega_i^+$ in \eqref{region} for $i=1,2$.  Here we only concern the case  $\varphi_2(x)=\varphi_1(x)+M$ with  $M>0$. 
Still we have  $\mathbb R_+\times\mathbb R^2=\Omega_i^-\cup\Omega_i^c\cup\Omega_i^+$ for $i=1,2$.

Now we prove \cref{RRM} by the following three cases:\\
	
\underline{Case 1}. If $(t,x,y)\in \Omega_1^-$, then
	by the definition of $\Omega_2^-$, we have $\Omega_1^-\subset\Omega_2^-$. Thus $u^R_1(t,x,y)=u^R_2(t,x,y)\equiv u_-$, and then \eqref{uur} is obviously true.\\
	
\underline{Case 2}. If $(t,x,y)\in \Omega_1^c$:
	
		\quad \underline{Case 2.1}.  If $(t,x,y)\in \Omega_1^c\cap \Omega_2^-$, then 
	\begin{align}
		y-Z_1(x,y)-\varphi_1(x-Z_1(x,y))=0\quad\text{and} \quad y-u_-t-M-\varphi _1(x-u_-t)<0.
	\end{align}
	Thus
	\begin{align*}
		0&>Z_1(x,y)-u_-t+\varphi_1(x-Z_1(x,y))-\varphi_1(x-u_-t)-M\\
		&=Z_1(x,y)-u_-t-\varphi'_1 (\theta)(Z_1(x,y)-u_-t)-M\\
		&=(Z_1(x,y)-u_-t)(1-\varphi'_1 (\theta))-M\\
		&\geq(Z_1(x,y)-u_-t)d_0-M.
	\end{align*}
	Then $\di |u^R_1(t,x,y)-u^R_2(t,x,y)|=\frac{Z_1(x,y)}{t}-u_-\leq \frac{M}{d_0t}$. Therefore, \eqref{uur} is proved.
	
	\quad \underline{Case 2.2}.  If $(t,x,y)\in \Omega_1^c\cap \Omega_2^c$, by
	\begin{align*}
		y-Z_1(x,y)-\varphi_1 (x-Z_1(x,y))=0\quad\text{and} \quad y- Z_2(x,y)-M-\varphi_1 (x-Z_2(x,y))=0,
	\end{align*}
we have
	\begin{align*}
		0&=Z_1(x,y)-Z_2(x,y)+\varphi_1(x-Z_1(x,y))-\varphi_1(x- Z_2(x,y))-M\\
		&=Z_1(x,y)- Z_2(x,y)-\varphi'_1 (\theta)(Z_1(x,y)-Z_2(x,y))-M\\
		&=(Z_1(x,y)- Z_2(x,y))(1-\varphi'_1 (\theta))-M.
	\end{align*}
	Then $$|u^R_1(t,x,y)-u^R_2(t,x,y)|=\left|\frac{Z_1(x,y)- Z_2(x,y)}{t}\right|=\frac{M}{t(1-\varphi'_1 (\theta))}\leq \frac{M}{d_0t}.$$ Therefore, \eqref{uur} is proved.
	
	\quad \underline{Case 2.3}.  If $(t,x,y)\in \Omega_1^c\cap \Omega_2^+,$ then similar to Case 3.2 in the proof of \cref{RRleq}, we have $\Omega_1^c\cap \Omega_2^+=\emptyset$.\\
	
\underline{Case 3}. If $(t,x,y)\in \Omega_1^+$:
	
\quad \underline{Case 3.1}.  If $(t,x,y)\in \Omega_1^+\cap \Omega_2^-$, then
	\begin{align*}
		y-u_+t-\varphi_1(x-u_+t)>0\quad\text{and} \quad  y-u_-t-M-\varphi_1(x-u_-t)<0.
	\end{align*}
	Thus we have 
	\begin{align*}
		0>y-u_-t-M-\varphi (x-u_-t)&>u_+t-u_-t+\varphi_1 (x-u_+t)-\varphi_1 (x-u_-t)-M\\
		&=(u_+-u_-)t-\varphi'_1 (\theta)(u_+-u_-)t-M\\
		&=(u_+-u_-)(1-\varphi'_1 (\theta))t-M\\
		&\geq(u_+-u_-)d_0t-M,
	\end{align*}
which yields that if $t>\frac{M}{d_0(u_+-u_-)}$, then  $y-u_-t-M-\varphi (x-u_-t)>0$,  this implies an contradiction.
 Therefore, for $t>\frac{M}{d_0(u_+-u_-)}$, $\Omega_1^+\cap \Omega_2^-=\emptyset$. On the other hand, for $0<t<\frac{M}{d_0(u_+-u_-)}$, we have $$|u^R_1(t,x,y)-u^R_2(t,x,y)|\leq u_+-u_-\leq\frac{M}{d_0t}.$$ Therefore, \eqref{uur} is proved.

	\quad \underline{Case 3.2}.  If $(t,x,y)\in \Omega_1^+\cap \Omega_2^c$, then
	\begin{align*}
		y-u_+t-\varphi_1(x-u_+t)>0\quad\text{and} \quad  y-Z_2(x,y)-M-\varphi_1(x-Z_2(x,y))=0.
	\end{align*}
	Similarly, we have
	\begin{align*}
		0&<Z_2(x,y)-u_+t+\varphi_1(x-Z_2(x,y))-\varphi_1(x-u_+t)+M\\
		&=Z_2(x,y)-u_+t-\varphi'_1 (\theta)(Z_2(x,y)-u_+t)+M\\
		&=(Z_2(x,y)-u_+t)(1-\varphi'_1 (\theta))+M.
	\end{align*}
	Then $|u^R_1(t,x,y)-u^R_2(t,x,y)|=u_+-\frac{Z_2(x,y)}{t}<\frac{M}{(1-\varphi'_1 (\theta))t}\leq \frac{M}{d_0t}$. Therefore, \eqref{uur} is proved.
	
	\quad \underline{Case 3.3}.  If  $(t,x,y)\in \Omega_1^+\cap \Omega_2^+$, we have $u^R_1(t,x,y)=u^R_2(t,x,y)=u_+$, and then  \eqref{uur} is obviously true.
	
The proof of  \eqref{uur} is completed.
\end{proof}


Now we can  prove \cref{prop12}.

\begin{proof}
	 Since $\norm{\varphi_1(x)-\varphi_2(x)}_{L^{\infty}(\mathbb R)}<+\infty,$ and $\varphi_i(x)\in C^2(\mathbb R)(i=1,2)$, so there exists a constant $M>0$ such that \begin{align}\label{lll}
		\varphi_2(x)-M\leq  \varphi_1(x)\leq\varphi_2(x)+M.\end{align}
	
	Let  $u^R_{2\pm}(t,x,y)$ be the rarefaction wave solution to the Riemann problem \eqref{eq0}-\eqref{0data} corresponding to the initial discontinuity $y=\varphi_2(x)\pm M$. According to \cref{RRleq} and \eqref{lll}, we have
	\begin{align}\label{lllr}
		u^R_{2+}(t,x,y)\leq  u^R_1(t,x,y)\leq u^R_{2-}(t,x,y).\end{align}
	By \cref{RRM} and \eqref{lllr}, we have
	\begin{align*}
		&\norm{u^R_1(t,x,y)-u^R_2(t,x,y)}_{L^{\infty}(\mathbb R^2)}\\\leq& \norm{u^R_1(t,x,y)-u^R_{2-}(t,x,y)}_{L^{\infty}(\mathbb R^2)}+\norm{u^R_{2-}(t,x,y)-u^R_2(t,x,y)}_{L^{\infty}(\mathbb R^2)}\\\leq&
		\norm{u^R_{2+}(t,x,y)-u^R_{2-}(t,x,y)}_{L^{\infty}(\mathbb R^2)}+\norm{u^R_{2-}(t,x,y)-u^R_2(t,x,y)}_{L^{\infty}(\mathbb R^2)}\\\leq&
		\frac{C}{t}.
	\end{align*}
Thus we complete the proof of \cref{prop12}. 
\end{proof}
Next we prove \cref{Z12}.
\begin{proof}
	From \eqref{zi}, $\forall (x, y)\in \mathbb{R}^2,$ we have
	\begin{equation}\label{z12v12} Z_1(x,y)-Z_2(x,y)+\varphi_1 (x-Z_1)-\varphi_2 (x-Z_2)=0.\end{equation}
	Thus
	\begin{align*}
		0=&Z_1(x,y)-Z_2(x,y)+\varphi_1 (x-Z_1)-\varphi_1 (x-Z_2)+\varphi_1 (x-Z_2)-\varphi_2(x-Z_2)\\ =&(Z_1(x,y)-Z_2(x,y))(1-\varphi'_1 (\theta))+\varphi_1 (x-Z_2)-\varphi_2 (x-Z_2), 
	\end{align*}
	for some $\theta$. Then by \eqref{2DH}, we have
	\begin{equation*}
		\left|Z_1(x,y)-Z_2(x,y)\right| \leq d_0^{-1}\left| \varphi_1 (x-Z_2)-\varphi_2 (x-Z_2)\right|.
	\end{equation*}
	Since $w_0'(z)>0$, thus it holds that
	\begin{equation}
		\begin{aligned}
		\left|w_0(Z_1(x,y))-w_0(Z_2(x,y))\right|&= w_0'(\theta)\left|Z_1(x,y)-Z_2(x,y)\right| \label{wz12}\\
		&\leq (w_0'(Z_1(x,y))+w_0'(Z_2(x,y)))	\left|Z_1(x,y)-Z_2(x,y)\right| \\
		&\leq d_0^{-1}w_0'(Z_1(x,y))\left| \varphi_1 (x-Z_2)-\varphi_2 (x-Z_2)\right|\\&~~~~~+d_0^{-1}w_0'(Z_2(x,y))\left| \varphi_1 (x-Z_2)-\varphi_2 (x-Z_2)\right|\\
		&:=J_1+J_2,
	\end{aligned}
	\end{equation}
and then
	\begin{equation}
		\norm{w_0(Z_1(x,y))-w_0(Z_2(x,y))}_{L^2(\mathbb R^2)}\leq  \norm{J_1}_{L^2(\mathbb R^2)}+\norm{J_2}_{L^2(\mathbb R^2)}.
	\end{equation}
	By \cref{LZ} (i) and \eqref{2DH}, we have the Jacobi determinant
	$$\begin{array}{ll}
		\di \frac{\partial (Z_1, x-Z_2)}{\partial(x,y)}&\di =\begin{vmatrix}
			\partial_xZ_1(x,y)&\partial_yZ_1(x,y)\\\partial_x(x-Z_2(x,y))&\partial_y(x-Z_2(x,y))
		\end{vmatrix}=\begin{vmatrix}
			\partial_xZ_1(x,y)&\partial_yZ_1(x,y)\\1-\partial_xZ_2(x,y)&-\partial_yZ_2(x,y)
		\end{vmatrix}
		\\[5mm]
		&=
		\begin{vmatrix}
			\partial_xZ_1(x,y)&1-\partial_xZ_1(x,y)\\\partial_yZ_2(x,y)&-\partial_yZ_2(x,y)
		\end{vmatrix}
		=-\partial_yZ_2(x,y)		<0.
	\end{array}$$
Due to the fact $0<\partial_yZ_2(x,y)<d_0^{-1}$, we have
\begin{equation}
		\begin{aligned}
		\norm{J_1}_{L^2(\mathbb R^2)}^2&=\iint_{\mathbb R^2}d_0^{-2}w_0'^2(Z_1(x,y))\left| \varphi_1 (x-Z_2)-\varphi_2 (x-Z_2)\right|^2dxdy\\
		&\leq C\iint_{\mathbb R^2}w_0'^2(\alpha))\left| \varphi_1 (\beta)-\varphi_2 (\beta)\right|^2d\alpha d\beta\\
		&=C\int_{\mathbb R}\left| \varphi_1 (\beta)-\varphi_2 (\beta)\right|^2 d\beta<+\infty\label{w0jl2}.
	\end{aligned}
\end{equation}
Similarly, we can obtain $\norm{J_2}_{L^2(\mathbb R^2)}^2<+\infty$. Therefore, it holds that
$$
\norm{w_0(Z_1(x,y))-w_0(Z_2(x,y))}_{L^2(\mathbb R^2)}< +\infty.
$$
Furthermore, we have
	\begin{align*}
		&\left|\partial _x(w_0(Z_1(x,y))-w_0(Z_2(x,y)))\right|\\=&\left| w_0'(Z_1)(\partial_xZ_1-\partial_xZ_2)+(w_0'(Z_1)-w_0'(Z_2))\partial_xZ_2\right|\\\leq & w_0'(Z_1)\left|\frac{\varphi_1'(x-Z_1)-\varphi_2'(x-Z_2)}{(1-\varphi_1'(x-Z_1))(1-\varphi_2'(x-Z_2))}\right|+C\left| w_0''(\theta)(Z_1-Z_2)\right| \\
		\leq &C w_0'(Z_1)\left|\varphi_1'(x-Z_1)-\varphi_2'(x-Z_2)\right|+Cw_0'(\theta)\left|Z_1(x, y)-Z_2(x, y)\right| \\
		\leq& Cw_0'(Z_1)|\varphi_1'(x-Z_1)-\varphi_2'(x-Z_1)|+Cw_0'(Z_1)|\varphi_2'(x-Z_1)-\varphi_2'(x-Z_2)|\\&+Cw_0'(\theta)\left|Z_1(x,y)-Z_2(x,y)\right|\\
		=&Cw_0'(Z_1)|\varphi_1'(x-Z_1)-\varphi_2'(x-Z_1)|+ Cw_0'(Z_1)\varphi_2''(\theta_1)|Z_1(x,y)-Z_2(x,y)|\\&+Cw_0'(\theta)\left|Z_1(x,y)-Z_2(x,y)\right|,
	\end{align*}
	where we have used $\eqref{2DH}$ and $ |w_0''(\theta)|\leq Cw_0'(\theta)$.
	Then by \eqref{idh1} and the similar process as in deriving \eqref{wz12} to \eqref{w0jl2}, we can prove 
	$$\norm{\partial _x(w_0(Z_1(x,y))-w_0(Z_2(x,y)))}_{L^2(\mathbb R^2)}< +\infty.$$ Similarly, we can get $$\norm{\partial _y(w_0(Z_1(x,y))-w_0(Z_2(x,y)))}_{L^2(\mathbb R^2)}< +\infty.$$
	
Therefore, we have
	$\norm{w_0(Z_1(x,y))-w_0(Z_2(x,y))}_{H^1(\mathbb R^2)}< +\infty.$
	The proof of \cref{Z12} is completed.
\end{proof}

\end{document}